\newtheorem{thm}{Theorem}[section]
\newtheorem{prop}[thm]{Proposition}
\newtheorem{lemma}[thm]{Lemma}
\newtheorem{cor}[thm]{Corollary}
\theoremstyle{remark}
\newtheorem{remark}[thm]{Remark}
\newcommand{\id}{{\rm{id}}}
\newcommand{\Ad}{{\rm{Ad}}}
\newcommand{\Hom}{{\rm{Hom}}}
\newcommand{\BC}{\mathbf C}
\newcommand{\BB}{\mathbf B}
\newcommand{\la}{\langle}
\newcommand{\ra}{\rangle}
\title{Coactions of a finite dimensional $C^*$-Hopf algebra on
unital $C^*$-algebras, unital inclusions of unital $C^*$-algebras and the strong Morita equivalence}
\author{Kazunori Kodaka and Tamotsu Teruya}
\address{Department of Mathematical Sciences, Faculty of Science, Ryukyu
\endgraf
University, Nishihara-cho, Okinawa, 903-0213, Japan}
\address{Faculty of Education, Gunma University, 4-2 Aramaki-machi, Maebashi City,
\endgraf
Gunma, 371-8510, Japan}
\address{\sl{E-mail address}: \rm{kodaka@math.u-ryukyu.ac.jp}}
\address{\sl{E-mail address}: \rm{teruya@gunma-u.ac.jp}}
\begin{document}
\maketitle
\begin{abstract}
Let $A$ and $B$ be unital $C^*$-algebras and let $H$ be a finite dimensional $C^*$-Hopf algebra.
Let $H^0$ be its dual $C^*$-Hopf algebra. Let $(\rho, u)$ and $(\sigma, v)$ be twisted
coactions of $H^0$ on $A$ and $B$, respectively. In this paper, we shall show the following theorem:
We suppose that the unital inclusions $A\subset A\rtimes_{\rho, u}H$ and $B\subset B\rtimes_{\sigma, v}H$
are strongly Morita equivalent. If $A'\cap (A\rtimes_{\rho, u}H)=\BC1$, then
there is a $C^*$-Hopf algebra automorphism
$\lambda^0$ of $H^0$ such that the twisted coaction $(\rho, u)$ is
strongly Morita equivalent to the twisted coaction
$((\id_B \otimes\lambda^0 )\circ\sigma \, , \, (\id_B \otimes\lambda^0 \otimes\lambda^0 )(v))$
induced by $(\sigma, v)$ and $\lambda^0$.
\end{abstract}

\section{Introduction}\label{sec:intro}In the previous papers \cite {KT3:equivalence} and
\cite {KT4:morita}, we discussed the strong Morita equivalences for twisted coactions of
a finite dimensional $C^*$-Hopf algebra on unital $C^*$-algebras and unital inclusions of
unital $C^*$-algebras. In this paper, we shall discuss the relation between the strong Morita equivalence
for twisted coactions of a finite dimensional $C^*$-Hopf algebra on unital $C^*$-algebras and the
strong Morita equivalence for the unital inclusions of the unital $C^*$-algebras induced by the twisted coactions
of the finite dimensional $C^*$-Hopf algebra on the unital $C^*$-algebras.
\par
Let us explain the problem in detail. Let $A$ and $B$ be unital $C^*$-algebras. Let $H$ be a finite dimensional
$C^*$-Hopf algebra and $H^0$ its dual $C^*$-Hopf algebra. Let $(\rho, u)$ and $(\sigma, v)$ be
twisted coactions of $H^0$ on $A$ and $B$, respectively. Then we can obtain the unital
inclusions of unital $C^*$-algebras $A\subset A\rtimes_{\rho, u}H$ and $B\subset B\rtimes_{\sigma, v}H$.
In the same way as in \cite [Example]{KT4:morita}, we can see that if $(\rho, u)$ and $(\sigma ,v)$
are strongly Morita equivalent, then the unital nclusions $A\subset A\rtimes_{\rho, u}H$ and
$B\subset B\rtimes_{\sigma, v}H$ are strongly Morita equivalent. In this paper, we shall discuss
the inverse implication. Our main theorem is as follows: We suppose that the unital inclusions
$A\subset A\rtimes_{\rho, u}H$ and $B\subset B\rtimes_{\sigma, v}H$ are strongly Morita
equivalent in the sense of \cite [Definition 2.1]{KT4:morita}. If $A'\cap (A\rtimes_{\rho, u}H)=\BC1$, then
there is a $C^*$-Hopf algebra automorphism
$\lambda^0$ of $H^0$ such that the twisted coaction $(\rho, u)$ is
strongly Morita equivalent to the twisted coaction
$((\id_B \otimes\lambda^0 )\circ\sigma \, , \, (\id_B \otimes\lambda^0 \otimes\lambda^0 )(v))$
induced by $(\sigma, v)$ and $\lambda^0$.
\par
For a unital $C^*$-algebra $A$,
let $M_n (A)$ be the $n\times n$-matrix
algebra over $A$ and $I_n$ denotes the unit element in $M_n (A)$. We identify $M_n (A)$ with
$A\otimes M_n (\BC)$.
\par
Let $A$ and $B$ be $C^*$-algebras and $X$ an $A-B$-bimodule. We denote its left $A$-action and
right $B$-action on $X$ by $a\cdot x$ and $x\cdot b$ for any $a\in A$, $b\in B$, $x\in X$, respectively.
Also, we denote by $\widetilde{X}$ the dual $B-A$-bimodule of $X$ and we denote by $\widetilde{x}$
the element in $\widetilde{X}$ induced by $x\in X$.

\section{Preliminaries}\label{sec:pre}Let $H$ be a finite dimensional $C^*$-Hopf algebra.
We denote its comultipication, counit and antipode by $\Delta$, $\epsilon$ and $S$, respectively.
We shall use Sweedler's notation, $\Delta(h)=h_{(1)}\otimes h_{(2)}$ for any $h\in H$ which suppresses a possible summation when we write comultiplications. We denote by $N$ the dimension of $H$.
Let $H^0$ be the dual $C^*$-Hopf algebra of $H$. We denote its comultiplication, counit and antipode by
$\Delta^0$, $\epsilon^0$ and $S^0$, respectively. There is the distinguished projection $e$ in $H$.
We note that $e$ is the Haar trace on $H^0$. Also, there is the distinguished projection $\tau$
in $H^0$ which is the Haar trace on $H$. Since $H$ is finite dimensional,
$H\cong\oplus_{k=1}^L M_{f_k}(\BC)$ and $H^0 \cong \oplus_{k=1}^K M_{d_k }(\BC)$ as
$C^*$-algebras. Let $\{v_{ij}^k \, | \, k=1,2, \dots L , \, i, j=1,2,\dots, f_k \}$ be a system of matrix
units of $H$. Let $\{w_{ij}^k \, | \, k=1,2,\dots,K ,\, i, j=1,2, \dots , d_k \}$ be a basis of $H$ satisfying
Szyma\'nski and Peligrad's \cite [Theorem 2.2,2]{SP:saturated}, which is called a system of
\sl
comatrix units
\rm
of $H$, that is, the dual basis of the system of matrix units of $H^0$. Also, let
$\{\phi_{ij}^k \, | \, k=1,2,\dots,K, \, i,j=1,2,\dots,d_k \}$ and
$\{\omega_{ij}^k \, | \, k=1,2,\dots, L, \, i, j=1,2,\dots,f_k \}$ be systems of matrix units and
comatrix units of $H^0$, respectively. Let $A$ be a unital $C^*$-algebra. We recall the definition of a twisted coaction $(\rho, u)$ of $H^0$ on $A$ (See \cite {KT1:inclusion}, \cite {KT2:coaction}).
Let $\rho$ be a weak coaction of $H^0$ on $A$ and $u$ a unitary element in $A\otimes H^0 \otimes H^0$.
Then we say that $(\rho, u)$ is a
\sl
twisted coaction
\rm
of $H^0$ on $A$ if
\newline
(1) $(\rho\otimes\id)\circ\rho=\Ad(u)\circ(\id\otimes\Delta^0 )\circ\rho$,
\newline
(2) $(u\otimes 1^0 )(\id\otimes\Delta^0 \otimes\id)(u)=(\rho\otimes\id\otimes\id)(u)
(\id\otimes\id\otimes\Delta^0 )(u)$,
\newline
(3) $(\id\otimes h\otimes\epsilon^0 )(u)=(\id\otimes\epsilon^0 \otimes h)(u)=\epsilon^0 (h)1$ for any $h\in H$.
\par
Let $\Hom (H, A)$ be the linear space of all linear maps from $H$ to $A$.
Then by Sweedler \cite [pp67-70]{Sweedler:Hopf}, it becomes a unital convolution $*$-algebra.
Since $H$ is finite dimensional, $\Hom (H, A)$ is isomorphic to
$A\otimes H^0$. For any element $x\in A\otimes H^0$, we denote by $\widehat{x}$ the
element in $\Hom(H, A)$ induced by $x$. Similarly, we define $\Hom (H\times H, A)$.
We identify $A\otimes H^0 \otimes H^0$ with $\Hom(H\times H, A)$.
For any element $y\in A\otimes H^0 \otimes H^0$, we denote by $\widehat{y}$ the element
in $\Hom(H\times H, A)$ induced by $y$. Furthermore, for a Hilbert $C^*$-bimodule
$X$, let $\Hom (H, X)$ be the linear space of all linear maps from $H$ to $X$.
We also identify $\Hom (H, X)$ with $X\otimes H^0$. For any element $x\in X\otimes H^0$, we denote
by $\widehat{x}$ the element in $\Hom(H, X)$ induced by $x$.
\par
For a twisted coaction $(\rho, u)$, we can consider the twisted action of $H$ on $A$
and its unitary element $\widehat{u}$ defined by
$$
h\cdot_{\rho, u}x=\widehat{\rho(x)}(h)=(\id\otimes h)(\rho(x))
$$
for any $x\in A$, $h\in H$. We call it the twisted action induced by $(\rho, u)$.
Let $A\rtimes_{\rho, u}H$ be the twisted crossed product by the twisted action of $H$
on $A$ induced by $(\rho, u)$. Let $x\rtimes_{\rho, u}h$ be the element in $A\rtimes_{\rho, u}H$
induced by elements $x\in A$, $h\in H$. Let $\widehat{\rho}$ be the dual coaction of $H$
on $A\rtimes_{\rho, u}H$ defined by
$$
\widehat{\rho}(x\rtimes_{\rho, u}h)=(x\rtimes_{\rho, u}h_{(1)})\otimes h_{(2)}
$$
for any $x\in A$, $h\in H$. Let $E_1^{\rho, u}$ be the canonical conditional expectation
from $A\rtimes_{\rho, u}H$ onto $A$ defined by
$$
E_1^{\rho, u}(x\rtimes_{\rho, u}h)=\tau(h)x
$$
for any $x\in A$, $h\in H$. Let $\Lambda$ be the set of all triplets $(i, j, k)$, where
$i, j=1,2,\dots,d_k$ and $k=1,2,\dots,K$ with $\sum_{k=1}^K d_k^2 =N$. Let
$W_I=\sqrt{d_k}\rtimes_{\rho, u}w_{ij}^k$ for any $I=(i, j, k)\in\Lambda$.
By \cite [Proposition 3.18]{KT1:inclusion},
$\{(W_I^{\rho *}, W_I^{\rho})\}_{I\in \Lambda}$ is a quasi-basis for $E_1^{\rho, u}$.
\begin{lemma}\label{lem:pre}With the above notations, for any $\psi\in H^0$,
$$
1\rtimes_{\rho, u}1\rtimes_{\widehat{\rho}}\psi
=\sum_{I\in\Lambda}([\psi\cdot_{\widehat{\rho}}W_I^{\rho*}]\rtimes_{\widehat{\rho}}1^0 )
(1\rtimes_{\rho, u}1\rtimes_{\widehat{\rho}}\tau)(W_I^{\rho}\rtimes_{\widehat{\rho}}1^0 ) .
$$
\end{lemma}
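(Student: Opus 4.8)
The plan is to carry out the computation inside the crossed product $C:=(A\rtimes_{\rho, u}H)\rtimes_{\widehat{\rho}}H^0$ of $B:=A\rtimes_{\rho, u}H$ by its dual coaction $\widehat{\rho}$. Writing $1_B=1\rtimes_{\rho, u}1$, identifying $b\in B$ with $b\rtimes_{\widehat{\rho}}1^0\in C$, and abbreviating $1\rtimes_{\rho, u}1\rtimes_{\widehat{\rho}}\psi$ by $1_B\rtimes_{\widehat{\rho}}\psi$, the asserted identity reads
\[
1_B\rtimes_{\widehat{\rho}}\psi=\sum_{I\in\Lambda}(\psi\cdot_{\widehat{\rho}}W_I^{\rho *})(1_B\rtimes_{\widehat{\rho}}\tau)W_I^{\rho}.
\]
I would deduce it from two facts: the covariance identity $(1_B\rtimes_{\widehat{\rho}}\psi)\,b=(\psi_{(1)}\cdot_{\widehat{\rho}}b)(1_B\rtimes_{\widehat{\rho}}\psi_{(2)})$ for $b\in B$, $\psi\in H^0$, which defines the crossed product by $\widehat{\rho}$; and the resolution of the identity $\sum_{I\in\Lambda}W_I^{\rho *}(1_B\rtimes_{\widehat{\rho}}\tau)W_I^{\rho}=1_B$ in $C$.

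The computational heart of the argument is the identity
\[
(1_B\rtimes_{\widehat{\rho}}\psi)\,c\,(1_B\rtimes_{\widehat{\rho}}\tau)=(\psi\cdot_{\widehat{\rho}}c)(1_B\rtimes_{\widehat{\rho}}\tau)\qquad(\psi\in H^0,\ c\in B).
\]
It follows by applying the covariance identity and then using that $\tau$ is the distinguished projection of $H^0$: since $\psi_{(2)}\tau=\epsilon^0(\psi_{(2)})\tau$ we have $(1_B\rtimes_{\widehat{\rho}}\psi_{(2)})(1_B\rtimes_{\widehat{\rho}}\tau)=\epsilon^0(\psi_{(2)})(1_B\rtimes_{\widehat{\rho}}\tau)$, and then the counit axiom $\psi_{(1)}\epsilon^0(\psi_{(2)})=\psi$ finishes it. The special case $\psi=\tau$, together with the elementary equality $\tau\cdot_{\widehat{\rho}}c=E_1^{\rho, u}(c)$ for $c\in B$ — which follows from $\widehat{\rho}(x\rtimes_{\rho, u}h)=(x\rtimes_{\rho, u}h_{(1)})\otimes h_{(2)}$, from the left invariance $h_{(1)}\tau(h_{(2)})=\tau(h)1$ of the Haar functional $\tau$ on $H$, and from $E_1^{\rho, u}(x\rtimes_{\rho, u}h)=\tau(h)x$ — shows that $1_B\rtimes_{\widehat{\rho}}\tau$ is the Jones projection for $A\subset B$, i.e. $(1_B\rtimes_{\widehat{\rho}}\tau)\,c\,(1_B\rtimes_{\widehat{\rho}}\tau)=E_1^{\rho, u}(c)(1_B\rtimes_{\widehat{\rho}}\tau)$.

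For the resolution of the identity I would invoke the description in \cite{KT1:inclusion} of $C$ as the $C^*$-basic construction for $A\subset A\rtimes_{\rho, u}H$ with Jones projection $1_B\rtimes_{\widehat{\rho}}\tau$ (as re-verified above), whose unit is exactly $\sum_{I\in\Lambda}W_I^{\rho *}(1_B\rtimes_{\widehat{\rho}}\tau)W_I^{\rho}$ for the quasi-basis $\{(W_I^{\rho *}, W_I^{\rho})\}_{I\in\Lambda}$ of \cite[Proposition 3.18]{KT1:inclusion}; alternatively, setting $P:=\sum_{I\in\Lambda}W_I^{\rho *}(1_B\rtimes_{\widehat{\rho}}\tau)W_I^{\rho}$, the Jones-projection relation together with the quasi-basis identity $\sum_{I\in\Lambda}W_I^{\rho *}E_1^{\rho, u}(W_I^{\rho}b)=b$ gives $P\,b\,(1_B\rtimes_{\widehat{\rho}}\tau)=b\,(1_B\rtimes_{\widehat{\rho}}\tau)$ for all $b\in B$, and since the closed linear span of $\{b\,(1_B\rtimes_{\widehat{\rho}}\tau)\,b'\mid b, b'\in B\}$ is all of $C$ this forces $P=1_B$. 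Granting both facts, the lemma follows at once: applying $(1_B\rtimes_{\widehat{\rho}}\psi)\,c\,(1_B\rtimes_{\widehat{\rho}}\tau)=(\psi\cdot_{\widehat{\rho}}c)(1_B\rtimes_{\widehat{\rho}}\tau)$ with $c=W_I^{\rho *}$, multiplying on the right by $W_I^{\rho}$ and summing over $I\in\Lambda$,
\[
\sum_{I\in\Lambda}(\psi\cdot_{\widehat{\rho}}W_I^{\rho *})(1_B\rtimes_{\widehat{\rho}}\tau)W_I^{\rho}=(1_B\rtimes_{\widehat{\rho}}\psi)\,P=1_B\rtimes_{\widehat{\rho}}\psi.
\]
I expect the only genuine obstacle to be the resolution of the identity: one must handle the conventions for the crossed product by the dual coaction carefully enough to be certain that $1\rtimes_{\rho, u}1\rtimes_{\widehat{\rho}}\tau$, with no extra positive scalar, is precisely the Jones projection, so that no factor of $N$ is lost — which is exactly the point where the results of \cite{KT1:inclusion} on the structure of $A\subset A\rtimes_{\rho, u}H$ and its basic construction are used. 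The remaining manipulations are routine.
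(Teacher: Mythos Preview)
Your proof is correct and follows essentially the same approach as the paper: both arguments multiply $1_B\rtimes_{\widehat{\rho}}\psi$ into the resolution of the identity $\sum_{I}W_I^{\rho*}(1_B\rtimes_{\widehat{\rho}}\tau)W_I^{\rho}=1$ coming from the quasi-basis of \cite[Proposition~3.18]{KT1:inclusion}, then use the covariance relation together with $\psi_{(2)}\tau=\epsilon^0(\psi_{(2)})\tau$ to collapse $\psi_{(1)}\otimes\psi_{(2)}$ to $\psi$. You merely package the covariance-plus-absorption step as a separate identity $(1_B\rtimes_{\widehat{\rho}}\psi)\,c\,(1_B\rtimes_{\widehat{\rho}}\tau)=(\psi\cdot_{\widehat{\rho}}c)(1_B\rtimes_{\widehat{\rho}}\tau)$ and supply extra justification for $P=1$, whereas the paper does these steps inline and simply asserts $P=1$ from the quasi-basis property.
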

\begin{proof}Since $\{(W_I^{\rho*}, W_I^{\rho})\}_{I\in\Lambda}$ is a quasi-basis
for $E_1^{\rho, u}$ by \cite [Proposition 3.18]{KT1:inclusion},
$$
\sum_{I\in\Lambda}(W_I^{\rho*}\rtimes_{\widehat{\rho}}1^0 )(1\rtimes_{\rho, u}1\rtimes_{\widehat{\rho}}\tau)
(W_I^{\rho}\rtimes_{\widehat{\rho}}1^0 )=1\rtimes_{\rho, u}1\rtimes_{\widehat{\rho}}1^0 .
$$
Hence for any $\psi\in H^0$,
\begin{align*}
1\rtimes_{\rho, u}1\rtimes_{\widehat{\rho}}\psi &
= \sum_{I\in \Lambda}(1\rtimes_{\rho, u}1\rtimes_{\widehat{\rho}}\psi)(W_I^{\rho*}\rtimes_{\widehat{\rho}}1^0 )
(1\rtimes_{\rho, u}1\rtimes_{\widehat{\rho}}\tau)(W_I^{\rho}\rtimes_{\widehat{\rho}}1^0 ) \\
& =\sum_{I\in\Lambda}([\psi_{(1)}\cdot_{\widehat{\rho}}W_I^{\rho*}]\rtimes_{\widehat{\rho}}\psi_{(2)})
(1\rtimes_{\rho, u}1\rtimes_{\widehat{\rho}}\tau)(W_I^{\rho}\rtimes_{\widehat{\rho}}1^0 ) \\
& =\sum_{I\in\Lambda}([\psi\cdot_{\widehat{\rho}}W_I^{\rho*}]\rtimes_{\widehat{\rho}}1^0 )
(1\rtimes_{\rho, u}1\rtimes_{\widehat{\rho}}\tau)(W_I^{\rho}\rtimes_{\widehat{\rho}}1^0 ) .
\end{align*}
Therefore, we obtain the conclusion.
\end{proof}

\section{A left coaction of a finite dimensional $C^*$-Hopf algebra on an equivalence bimodule}
\label{sec:left}Let $A$ and $B$ be unital $C^*$-algebras and let $(\rho, u)$ and $(\sigma, v)$
be twisted coactions of $H^0$ on $A$ and $B$, respectively.
Let $A\rtimes_{\rho, u}H$ and $B\rtimes_{\sigma, v}H$ be the twisted crossed products of $A$
and $B$ by $(\rho, u)$ and $(\sigma, v)$, respectively. We denote them by $C$ and $D$, respectively. Then
we obtain unital inclusions $A\subset C$ and $B\subset D$ of unital $C^*$-algebras. We suppose that
$A\subset C$ and $B\subset D$ are strongly Morita equivalent with respect to
a $C-D$-equivalence bimodule $Y$ and its closed subspace $X$ in the sense of
\cite [Definition 2.1]{KT4:morita}. Also, by \cite [Section 2]{KT4:morita},
there is a conditional expectation $E^X$ from $Y$ onto $X$ with respect to $E_1^{\rho, u}$ and
$E_1^{\sigma, v}$ satisfying Conditions (1)-(6) in \cite [Definition 2.2]{KT4:morita}.
Furthermore, by \cite [Section 6]{KT4:morita}, we can see that the unital inclusions
$C\subset C_1$ and $D\subset D_1$ are strongly Morita equivalent with respect to
the $C_1 -D_1$-equivalence bimodule $Y_1$ and its closed subspace $Y$,
where $C_1 =C\rtimes_{\widehat{\rho}}H^0$ and $D_1 =D\rtimes_{\widehat{\sigma}}H^0$.
As defined in \cite {KT4:morita}, we define $Y_1$ as follows: We regard $C$ and $D$ as
a $C_1 -A$-equivalence bimodule and a $D_1 -B$-equivalence bimodule in the
usual way as in \cite [Section 4]{KT4:morita}, respectively. Let $Y_1 =C\otimes _A X\otimes_B \widetilde{D}$.
Let $E^Y$ be the conditional expectation from $Y_1$ onto $Y$ with respect to
$E_2^{\rho, u}$ and $E_2^{\sigma, v}$ defined by
$$
E^Y (c\otimes x\otimes\widetilde{d})=\frac{1}{N}c\cdot x\cdot d^*
$$
for any $c\in C$, $d\in D$, $x\in X$, where $E_2^{\rho, u}$ and $E_2^{\sigma, v}$
are the canonical conditional expectations from $C_1$ and $D_1$ onto $C$ and $D$
defined by
$$
E_2^{\rho, u}(c\rtimes_{\widehat{\rho}}\psi) =c\psi(e)  , \quad
E_2^{\sigma, v}(d\rtimes_{\widehat{\sigma}}\psi)=d\psi(e) .
$$
for any $c\in C$, $d\in D$, $\psi\in H^0$, respectively. We regard $Y$ as a closed subspace of $Y_1$
by the injective linear map $\phi$ from $Y$ to $Y_1$ defined by
$$
\phi(y)=\sum_{I, J\in \Lambda}W_I^{\rho*}\otimes E^X (W_I^{\rho}\cdot y\cdot W_J^{\sigma*})
\otimes\widetilde{W_J^{\sigma*}}
$$
for any $y\in Y$. In this section, we construct a left coaction of $H$ on $Y$
with respect to $(C, \widehat{\rho})$. First, we define
the bilinear map $`` \triangleright "$ from $H^0 \times Y$ to $Y$ as follows:
For any $\psi\in H^0$, $y\in Y$,
$$
\psi\triangleright y =NE^Y ((1\rtimes_{\rho, u}1\rtimes_{\widehat{\rho}}\psi)\cdot
\phi(y)\cdot (1\rtimes_{\sigma, v}1\rtimes_{\widehat{\sigma}}\tau)) .
$$
\begin{remark}\label{remark:calculus}For any $\psi\in H^0$, $y\in Y$,
$$
\psi\triangleright y=\sum_{I\in \Lambda}[\psi\cdot_{\widehat{\rho}}W_I^{\rho*}]
\cdot E^X (W_I^{\rho}\cdot y) .
$$
Indeed, since $1\rtimes_{\rho, u}1\rtimes_{\widehat{\rho}} \tau$ is the Jones projection
for the conditional expectation $E_1^{\rho, u}$, for any $c\in C$
$(1\rtimes_{\rho, u}1\rtimes_{\widehat{\rho}}\tau)\cdot c=E_1^{\rho, u}(c)$. Also,
$(1\rtimes_{\sigma, v}\rtimes_{\widehat{\sigma}}\tau)\cdot d =E_1^{\sigma, v}(d)$
for any $d\in D$.
Hence by Lemma \ref{lem:pre}, for any $\psi\in H^0$, $y\in Y$,
\begin{align*}
&\psi\triangleright y \\
&=\sum_{I, J\in \Lambda}NE^Y ((1\rtimes_{\rho, u}1\rtimes_{\widehat{\rho}}\psi)\cdot W_I^{\rho*}
\otimes E^X (W_I^{\rho}\cdot y\cdot W_J^{\sigma*} )\otimes\widetilde{W_J^{\sigma*}}\cdot
(1\rtimes_{\sigma, v}1\rtimes_{\widehat{\sigma}}\tau)) \\
& =\sum_{I, I_1 , J \in\Lambda}NE^Y ([\psi\cdot_{\widehat{\rho}}W_{I_1}^{\rho*}]
E_1^{\rho, u}(W_{I_1}^{\rho}W_I^{\rho*})\otimes E^X (W_I^{\rho}\cdot y\cdot W_J^{\sigma*})
\otimes E_1^{\sigma, v}(W_J^{\sigma*})^{\widetilde{}}) \\
& =\sum_{I, I_1 , J\in \Lambda}NE^Y ([\psi\cdot_{\widehat{\rho}}W_{I_1}^{\rho*}]
\otimes E_1^{\rho, u}(W_{I_1}^{\rho}W_I^{\rho*})\cdot E^X (W_I^{\rho}\cdot y\cdot W_{J}^{\sigma*})\cdot
E_1^{\sigma, v}(W_J^{\sigma})\otimes \widetilde{1}) \\
& =\sum_{I, I_1 , J\in\Lambda}[\psi\cdot_{\widehat{\rho}}W_{I_1}^{\rho*}]\cdot E^X (E_1^{\rho, u}(W_{I_1}^{\rho}
W_I^{\rho*})W_I^{\rho}\cdot y\cdot W_J^{\sigma*}E_1^{\sigma, v}(W_J^{\sigma})) \\
& =\sum_{I_1 \in \Lambda}[\psi\cdot_{\widehat{\rho}}W_{I_1}^{\rho*}]\cdot E^X (W_{I_1}^{\rho}\cdot y).
\end{align*}
\end{remark}

\begin{lemma}\label{lem:fix}With the above notations, for any $x\in X$, $\psi\in H^0$,
$\psi\triangleright x=\epsilon^0 (\psi)x$.
\end{lemma}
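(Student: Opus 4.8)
The plan is to run everything through the explicit formula of Remark~\ref{remark:calculus}: for $x\in X$ and $\psi\in H^0$ it gives
$$\psi\triangleright x=\sum_{I\in\Lambda}[\psi\cdot_{\widehat{\rho}}W_I^{\rho*}]\cdot E^X (W_I^{\rho}\cdot x),$$
so the lemma reduces to understanding $E^X (W_I^{\rho}\cdot x)$ for $x\in X$. The auxiliary fact I would isolate and prove first is
$$E^X (c\cdot x)=E_1^{\rho, u}(c)\cdot x\qquad(c\in C,\ x\in X).$$
Since $W_I^{\rho}=\sqrt{d_k}\rtimes_{\rho, u}w_{ij}^k$, this makes $E^X (W_I^{\rho}\cdot x)=E_1^{\rho, u}(W_I^{\rho})\cdot x=\sqrt{d_k}\tau(w_{ij}^k)x$ a scalar multiple of $x$.

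To obtain the auxiliary fact I would decompose $c=E_1^{\rho, u}(c)+(c-E_1^{\rho, u}(c))$. As $E_1^{\rho, u}(c)\in A$, as $X$ is invariant under the left $A$-action, and as $E^X$ is the identity on $X$, it is enough to prove $E^X (c'\cdot x)=0$ for $c'\in\Ker E_1^{\rho, u}$ and $x\in X$. For this I would use the condition ${}_A\la E^X (y_1), E^X (y_2)\ra=E_1^{\rho, u}({}_C\la y_1, y_2\ra)$ on $E^X$ from \cite [Definition 2.2]{KT4:morita}, together with the fact that, by the definition of strong Morita equivalence of the inclusions in \cite [Definition 2.1]{KT4:morita}, the $A$-valued inner product of $X$ is the restriction of the $C$-valued inner product of $Y$, so ${}_C\la x, z\ra={}_A\la x, z\ra\in A$ for $x, z\in X$. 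Then, for every $z\in X$,
\begin{align*}
{}_A\la E^X (c'\cdot x), z\ra & ={}_A\la E^X (c'\cdot x), E^X (z)\ra=E_1^{\rho, u}({}_C\la c'\cdot x, z\ra) \\
& =E_1^{\rho, u}(c'\cdot{}_C\la x, z\ra)=E_1^{\rho, u}(c')\cdot{}_C\la x, z\ra=0,
\end{align*}
using left $C$-linearity of ${}_C\la\cdot, \cdot\ra$, the $A$-bimodule property of $E_1^{\rho, u}$, and ${}_C\la x, z\ra\in A$. Choosing $z=E^X (c'\cdot x)$ and invoking positive definiteness of ${}_A\la\cdot, \cdot\ra$ yields $E^X (c'\cdot x)=0$.

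It then remains to substitute. Moving the scalar $E_1^{\rho, u}(W_I^{\rho})$ past $[\psi\cdot_{\widehat{\rho}}W_I^{\rho*}]$, using linearity of $\psi\cdot_{\widehat{\rho}}(\,\cdot\,)$, the quasi-basis identity $\sum_{I\in\Lambda}W_I^{\rho*}E_1^{\rho, u}(W_I^{\rho})=1$ (from $\{(W_I^{\rho*}, W_I^{\rho})\}_{I\in\Lambda}$ being a quasi-basis for $E_1^{\rho, u}$), and $\psi\cdot_{\widehat{\rho}}1=\epsilon^0 (\psi)1$, I would get
\begin{align*}
\psi\triangleright x & =\sum_{I\in\Lambda}[\psi\cdot_{\widehat{\rho}}W_I^{\rho*}]E_1^{\rho, u}(W_I^{\rho})\cdot x=\Bigl(\psi\cdot_{\widehat{\rho}}\sum_{I\in\Lambda}W_I^{\rho*}E_1^{\rho, u}(W_I^{\rho})\Bigr)\cdot x \\
& =(\psi\cdot_{\widehat{\rho}}1)\cdot x=\epsilon^0 (\psi)x,
\end{align*}
which is the assertion.

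The step I expect to be the main obstacle is the auxiliary fact $E^X (c\cdot x)=E_1^{\rho, u}(c)\cdot x$: this is where the precise conditions on $E^X$ in \cite [Definition 2.2]{KT4:morita} and the compatibility of the inner products of $X$ and $Y$ from \cite [Definition 2.1]{KT4:morita} have to be combined correctly — in particular the collapse $E_1^{\rho, u}(c'\cdot{}_C\la x, z\ra)=E_1^{\rho, u}(c')\cdot{}_C\la x, z\ra$ really uses that ${}_C\la x, z\ra$ lands in $A$. Everything after that is routine manipulation with the quasi-basis and the counit.
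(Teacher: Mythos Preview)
Your argument is correct, and it is in fact a bit more direct than the paper's. Both proofs ultimately rest on the same auxiliary fact $E^X (c\cdot x)=E_1^{\rho, u}(c)\cdot x$ for $c\in C$, $x\in X$, and both finish with the quasi-basis identity $\sum_I W_I^{\rho*}E_1^{\rho, u}(W_I^{\rho})=1$. The difference is how the $\epsilon^0(\psi)$ is extracted. The paper returns to the original $E^Y$-definition of $\psi\triangleright x$, invokes \cite[Lemma 6.9]{KT4:morita} to commute the Jones projection $1\rtimes_{\widehat{\sigma}}\tau$ past $\phi(x)$ (valid precisely because $x\in X$), and then uses $\psi\tau=\epsilon^0(\psi)\tau$ in $H^0$ before unwinding via Remark~\ref{remark:calculus} with $\tau$ in place of $\psi$. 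You instead stay entirely in the Remark~\ref{remark:calculus} formula and obtain $\epsilon^0(\psi)$ at the very end from $\psi\cdot_{\widehat{\rho}}1=\epsilon^0(\psi)1$, which avoids the citation of \cite[Lemma 6.9]{KT4:morita} altogether. One small cosmetic point: the condition from \cite[Definition 2.2]{KT4:morita} you actually use is ${}_A\la E^X (y), z\ra=E_1^{\rho, u}({}_C\la y, z\ra)$ for $y\in Y$, $z\in X$ (you only apply your stated identity with $y_2=z\in X$, so $E^X(z)=z$); and the passage $[\psi\cdot_{\widehat{\rho}}W_I^{\rho*}]\,E_1^{\rho, u}(W_I^{\rho})=\psi\cdot_{\widehat{\rho}}(W_I^{\rho*}E_1^{\rho, u}(W_I^{\rho}))$ works not merely because $E_1^{\rho, u}(W_I^{\rho})$ is scalar, but more generally because $\psi\cdot_{\widehat{\rho}}a=\epsilon^0(\psi)a$ for $a\in A$.
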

\begin{proof}By Remark \ref{remark:calculus}, \cite [Theorem 2.2]{SP:saturated} and \cite [Lemma 6.9]{KT4:morita}
\begin{align*}
\psi\triangleright x & =NE^Y ((1\rtimes_{\rho, u}1\rtimes_{\widehat{\rho}}\psi)\cdot\phi(x)\cdot
(1\rtimes_{\sigma, v}1\rtimes_{\widehat{\sigma}}\tau)) \\
& =NE^Y ((1\rtimes_{\rho, u}1\rtimes_{\widehat{\rho}}\psi\tau)\cdot\phi(x)) \\
& =N\epsilon^0 (\psi)E^Y ((1\rtimes_{\rho, u}1\rtimes_{\widehat{\rho}}\tau)\cdot\phi(x))
=\epsilon^0 (\psi)\sum_{I\in\Lambda}[\tau\cdot_{\widehat{\rho}}W_I^{\rho*}]\cdot E^X (W_I^{\rho}\cdot x) \\
& =\epsilon^0 (\psi) ( \sum_{I\in\Lambda}E_1^{\rho, u}(W_I^{\rho*}E_1^{\rho, u}(W_I^{\rho})))\cdot x
=\epsilon^0 (\psi)x .
\end{align*}
\end{proof}

\begin{lemma}\label{lem:definition1}With the above notations, for any $a\in A$, $h\in H$, $\psi\in H^0$, $y\in Y$,
$$
\psi\triangleright(a\rtimes_{\rho, u}h)\cdot y=[\psi_{(1)}\cdot_{\widehat{\rho}}(a\rtimes_{\rho, u}h)]\cdot
[\psi_{(2)}\triangleright y] .
$$
\end{lemma}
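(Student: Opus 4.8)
The plan is to prove the slightly more general identity
$\psi\triangleright(c\cdot y)=[\psi_{(1)}\cdot_{\widehat{\rho}}c]\cdot[\psi_{(2)}\triangleright y]$
for an arbitrary $c\in C=A\rtimes_{\rho, u}H$, $y\in Y$, $\psi\in H^0$, and then to specialise to $c=a\rtimes_{\rho, u}h$. By Remark \ref{remark:calculus} the left-hand side equals $\sum_{I\in\Lambda}[\psi\cdot_{\widehat{\rho}}W_I^{\rho*}]\cdot E^X((W_I^{\rho}c)\cdot y)$. The first move is to re-expand $W_I^{\rho}c$ by the quasi-basis identity $\sum_{J\in\Lambda}E_1^{\rho, u}((W_I^{\rho}c)W_J^{\rho*})W_J^{\rho}=W_I^{\rho}c$ inside the module action, and then use the left $A$-linearity of $E^X$ (one of Conditions (1)--(6) of \cite[Definition 2.2]{KT4:morita}) to obtain $E^X((W_I^{\rho}c)\cdot y)=\sum_{J\in\Lambda}E_1^{\rho, u}(W_I^{\rho}cW_J^{\rho*})\cdot E^X(W_J^{\rho}\cdot y)$, so that $\psi\triangleright(c\cdot y)=\sum_{I, J\in\Lambda}[\psi\cdot_{\widehat{\rho}}W_I^{\rho*}]\,E_1^{\rho, u}(W_I^{\rho}cW_J^{\rho*})\cdot E^X(W_J^{\rho}\cdot y)$.

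The heart of the argument is the identity $\sum_{I\in\Lambda}[\psi\cdot_{\widehat{\rho}}W_I^{\rho*}]\,E_1^{\rho, u}(W_I^{\rho}d)=\psi\cdot_{\widehat{\rho}}d$ for every $d\in C$. I would deduce it from the quasi-basis expansion $d=\sum_{I\in\Lambda}W_I^{\rho*}E_1^{\rho, u}(W_I^{\rho}d)$ by applying $\psi\cdot_{\widehat{\rho}}$, using that the $H^0$-action $\cdot_{\widehat{\rho}}$ induced by the dual coaction $\widehat{\rho}$ is a measuring, $\psi\cdot_{\widehat{\rho}}(xy)=[\psi_{(1)}\cdot_{\widehat{\rho}}x][\psi_{(2)}\cdot_{\widehat{\rho}}y]$, together with the fact that $\widehat{\rho}$ restricts to the trivial coaction on $A$, so $\psi_{(2)}\cdot_{\widehat{\rho}}E_1^{\rho, u}(W_I^{\rho}d)=\epsilon^0(\psi_{(2)})E_1^{\rho, u}(W_I^{\rho}d)$; the counit axiom $\psi_{(1)}\epsilon^0(\psi_{(2)})=\psi$ then collapses the expression. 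Feeding $d=cW_J^{\rho*}$ into this identity turns the double sum into $\sum_{J\in\Lambda}[\psi\cdot_{\widehat{\rho}}(cW_J^{\rho*})]\cdot E^X(W_J^{\rho}\cdot y)$. Using the measuring property once more, $\psi\cdot_{\widehat{\rho}}(cW_J^{\rho*})=[\psi_{(1)}\cdot_{\widehat{\rho}}c][\psi_{(2)}\cdot_{\widehat{\rho}}W_J^{\rho*}]$; pulling $\psi_{(1)}\cdot_{\widehat{\rho}}c$ out of the sum over $J$ and recognising what remains as $\psi_{(2)}\triangleright y$ via Remark \ref{remark:calculus} yields $\psi\triangleright(c\cdot y)=[\psi_{(1)}\cdot_{\widehat{\rho}}c]\cdot[\psi_{(2)}\triangleright y]$, and taking $c=a\rtimes_{\rho, u}h$ finishes the proof.

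The step I expect to be the main obstacle is the bookkeeping of the legs of $\Delta^0(\psi)$: one must check that in the intermediate identity the \emph{whole} of $\psi$ survives on the quasi-basis factor (no leg is consumed there), and, in the last step, that $\psi_{(1)}$ ends up acting on $c$ while $\psi_{(2)}$ acts on $W_J^{\rho*}$, so the final formula has exactly the order stated. An essentially equivalent but shorter route, at the cost of quoting more of \cite{KT4:morita}, is available: the map $\phi\colon Y\to Y_1$ and the conditional expectation $E^Y\colon Y_1\to Y$ are both left $C$-linear --- $\phi$ because, by \cite[Section 6]{KT4:morita}, $C\subset C_1$ and $D\subset D_1$ are strongly Morita equivalent with respect to $Y_1$ and the closed subspace $\phi(Y)$, and $E^Y$ directly from its defining formula --- so one may compute $\psi\triangleright(c\cdot y)$ straight from its definition, moving $1\rtimes_{\rho, u}1\rtimes_{\widehat{\rho}}\psi$ past $c$ via the commutation relation $(1\rtimes_{\rho, u}1\rtimes_{\widehat{\rho}}\psi)(c\rtimes_{\widehat{\rho}}1^0)=([\psi_{(1)}\cdot_{\widehat{\rho}}c]\rtimes_{\widehat{\rho}}\psi_{(2)})$ already used in Remark \ref{remark:calculus}, and then pulling $\psi_{(1)}\cdot_{\widehat{\rho}}c$ out through $E^Y$.
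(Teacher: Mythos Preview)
Your proposal is correct. Your main argument proceeds via the explicit formula of Remark~\ref{remark:calculus}, a quasi-basis re-expansion of $W_I^{\rho}c$, and the measuring property of the $H^0$-action induced by $\widehat{\rho}$; this is a valid computational route. The paper, however, takes exactly the ``shorter route'' you sketch at the end: it works directly from the defining expression $NE^Y((1\rtimes_{\rho,u}1\rtimes_{\widehat{\rho}}\psi)\cdot\phi(c\cdot y)\cdot(1\rtimes_{\sigma,v}1\rtimes_{\widehat{\sigma}}\tau))$, invokes the left $C$-linearity of $\phi$ (this is \cite[Lemma~6.1]{KT4:morita}), applies the crossed-product commutation relation $(1\rtimes_{\rho,u}1\rtimes_{\widehat{\rho}}\psi)(c\rtimes_{\widehat{\rho}}1^0)=[\psi_{(1)}\cdot_{\widehat{\rho}}c]\rtimes_{\widehat{\rho}}\psi_{(2)}$, and then pulls $\psi_{(1)}\cdot_{\widehat{\rho}}c\in C$ out through the left $C$-linear $E^Y$. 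The trade-off is clear: the paper's five-line proof is cleaner but leans on an external lemma from \cite{KT4:morita}, whereas your primary argument stays entirely within the quasi-basis machinery already set up in Remark~\ref{remark:calculus} and only uses elementary properties ($A$-linearity of $E^X$, the measuring identity, triviality of $\widehat{\rho}$ on $A$). Your bookkeeping worry about the Sweedler legs is handled correctly in both versions.
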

\begin{proof}For any $a\in A$, $h\in H$, $\psi\in H^0$ , $y\in Y$,
\begin{align*}
& \psi\triangleright(a\rtimes_{\rho, u}h)\cdot y =
NE^Y ((1\rtimes_{\rho, u}1\rtimes_{\widehat{\rho}}\psi)\cdot
\phi((a\rtimes_{\rho, u}h)\cdot y)\cdot(1\rtimes_{\sigma, v}1\rtimes_{\widehat{\sigma}}\tau)) \\
& =NE^Y ((1\rtimes_{\rho, u}1\rtimes_{\widehat{\rho}}\psi)(a\rtimes_{\rho, u}h\rtimes_{\widehat{\rho}}1^0 )
\cdot \phi(y)\cdot(1\rtimes_{\sigma, v}1\rtimes_{\widehat{\sigma}}\tau)) \\
& =NE^Y (([\psi_{(1)}\cdot_{\widehat{\rho}}(a\rtimes_{\rho, u}h)]\rtimes_{\widehat{\rho}}\psi_{(2)})
\cdot\phi(y)\cdot(1\rtimes_{\sigma, v}1\rtimes_{\widehat{\sigma}}\tau)) \\
& =[\psi_{(1)}\cdot_{\widehat{\rho}}(a\rtimes_{\rho, u}h)]
\cdot NE^Y ((1\rtimes_{\rho, u}
1\rtimes_{\widehat{\rho}}\psi_{(2)})\cdot\phi(y)\cdot (1\rtimes_{\sigma, v}1\rtimes_{\widehat{\sigma}}\tau)) \\
& =[\psi_{(1)}\cdot_{\widehat{\rho}}(a\rtimes_{\rho, u}h)]\cdot[\psi_{(2)}\triangleright y]
\end{align*}
by \cite [Lemma 6.1]{KT4:morita}.
\end{proof}

\begin{lemma}\label{lem:definition2}With the above notations, for any $\psi, \chi\in H^0$, $y\in Y$,
$$
\psi\triangleright[\chi\triangleright y]=\psi\chi\triangleright y .
$$
\end{lemma}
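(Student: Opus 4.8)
The plan is to verify the identity $\psi\triangleright[\chi\triangleright y]=\psi\chi\triangleright y$ by direct computation, using the explicit formula from Remark~\ref{remark:calculus}, namely $\chi\triangleright y=\sum_{I\in\Lambda}[\chi\cdot_{\widehat{\rho}}W_I^{\rho*}]\cdot E^X(W_I^{\rho}\cdot y)$, together with Lemma~\ref{lem:definition1}. The point is that each term of $\chi\triangleright y$ is of the form (an element of $C=A\rtimes_{\rho,u}H$) acting on (an element of $X$), and $E^X(W_I^\rho\cdot y)$ lies in $X$, so I can peel off the outer $\psi$ using the module-type identity in Lemma~\ref{lem:definition1} applied with the element $W_I^{\rho*}=\sqrt{d_k}\,1\rtimes_{\rho,u}(w_{ij}^k)^*$ of $C$, and then use Lemma~\ref{lem:fix} to see that $\psi_{(2)}\triangleright E^X(W_I^\rho\cdot y)=\epsilon^0(\psi_{(2)})E^X(W_I^\rho\cdot y)$.

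First I would write
$$
\psi\triangleright[\chi\triangleright y]
=\sum_{I\in\Lambda}\psi\triangleright\bigl([\chi\cdot_{\widehat{\rho}}W_I^{\rho*}]\cdot E^X(W_I^{\rho}\cdot y)\bigr).
$$
Since $[\chi\cdot_{\widehat{\rho}}W_I^{\rho*}]\in C$ and $E^X(W_I^\rho\cdot y)\in X\subset Y$, Lemma~\ref{lem:definition1} gives
$$
\psi\triangleright[\chi\triangleright y]
=\sum_{I\in\Lambda}[\psi_{(1)}\cdot_{\widehat{\rho}}(\chi\cdot_{\widehat{\rho}}W_I^{\rho*})]\cdot
\bigl[\psi_{(2)}\triangleright E^X(W_I^{\rho}\cdot y)\bigr].
$$
Now $E^X(W_I^\rho\cdot y)$ is an element of $X$, so by Lemma~\ref{lem:fix} the inner triangle collapses to $\epsilon^0(\psi_{(2)})E^X(W_I^\rho\cdot y)$. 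Contracting $\psi_{(1)}\epsilon^0(\psi_{(2)})=\psi$ and using that the dual coaction action $\cdot_{\widehat\rho}$ of $H^0$ on $C$ is an honest action (so $\psi\cdot_{\widehat\rho}(\chi\cdot_{\widehat\rho}W_I^{\rho*})=(\psi\chi)\cdot_{\widehat\rho}W_I^{\rho*}$, coming from $\widehat\rho$ being a coaction of $H$ dual to this $H^0$-action), the right-hand side becomes
$$
\sum_{I\in\Lambda}[(\psi\chi)\cdot_{\widehat{\rho}}W_I^{\rho*}]\cdot E^X(W_I^{\rho}\cdot y)
=\psi\chi\triangleright y,
$$
again by Remark~\ref{remark:calculus}, which is the desired conclusion.

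The main obstacle I anticipate is bookkeeping rather than conceptual: I must be careful that Lemma~\ref{lem:definition1}, as stated, applies to $[\chi\cdot_{\widehat{\rho}}W_I^{\rho*}]$ — this is legitimate because that element is a finite linear combination of elements of the form $a\rtimes_{\rho,u}h$ and both sides of Lemma~\ref{lem:definition1} are linear in the $C$-variable — and that the associativity/action property of $\cdot_{\widehat{\rho}}$ on $C$ used in the contraction step is exactly the statement that $\widehat\rho$ is a (non-twisted) coaction of $H$ on $C$, equivalently that $\cdot_{\widehat\rho}\colon H^0\times C\to C$ is a left $H^0$-module action. If one prefers to avoid invoking that property abstractly, one can instead expand everything through $\phi$ and $E^Y$ as in the proof of Remark~\ref{remark:calculus} and apply Lemma~\ref{lem:pre} twice; but the route through Lemmas~\ref{lem:definition1} and~\ref{lem:fix} is cleaner, and the only real care needed is tracking the Sweedler legs of $\psi$.
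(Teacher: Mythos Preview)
Your proposal is correct and follows essentially the same route as the paper's own proof: expand $\chi\triangleright y$ via Remark~\ref{remark:calculus}, apply Lemma~\ref{lem:definition1} (extended by linearity to all of $C$) to peel off the $C$-coefficient, use Lemma~\ref{lem:fix} on the $X$-part to collapse $\psi_{(2)}$ to $\epsilon^0(\psi_{(2)})$, and then use the module property of $\cdot_{\widehat\rho}$ together with the counit identity and Remark~\ref{remark:calculus} again to recognize $\psi\chi\triangleright y$. The bookkeeping caveats you flag are exactly the ones the paper implicitly absorbs.
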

\begin{proof}By Remark \ref {remark:calculus}, for any $\psi, \chi\in H^0$, $y\in Y$,
$$
\psi\triangleright[\chi\triangleright y]=\psi\triangleright\sum_{I\in \Lambda}[\chi\cdot_{\widehat{\rho}}W_I^{\rho*}]
\cdot E^X (W_I^{\rho}\cdot y) .
$$
Since $\chi\cdot_{\widehat{\rho}}W_I^{\rho*}\in C$, by Lemma \ref{lem:definition1},
$$
\psi\triangleright[\chi\triangleright y]=\sum_{I\in\Lambda}[\psi_{(1)}\cdot_{\widehat{\rho}}[\chi\cdot_{\widehat{\rho}}
W_I^{\rho*}]]\cdot[\psi_{(2)}\triangleright E^X (W_I^{\rho}\cdot y)] .
$$
Since $E^X (W_I^{\rho} \cdot y)\in X$, by Lemma \ref {lem:fix} and Remark \ref {remark:calculus}
\begin{align*}
\psi\triangleright[\chi\triangleright y] & =\sum_{I\in\Lambda}[\psi_{(1)}\chi\cdot_{\widehat{\rho}}W_I^{\rho*}]
\cdot\epsilon^0 (\psi_{(2)})E^X (W_I^{\rho}\cdot y) \\
& =\sum_{I\in \Lambda}[\psi\chi\cdot_{\widehat{\rho}}W_I^{\rho*}]\cdot E^X (W_I^{\rho}\cdot y)
=\psi\chi\triangleright y .
\end{align*}
\end{proof}

\begin{lemma}\label{lem:definition3}With the above notations, for any $\psi\in H^0$, $y, z\in Y$,
$$
\psi\cdot_{\widehat{\rho}}{}_C \la y, z \ra ={}_C \la \psi_{(1)}\triangleright y \, , \, S^0 (\psi_{(2)}^* )\triangleright z \ra .
$$
\end{lemma}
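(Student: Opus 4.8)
The plan is to compute the right-hand side directly from the explicit formula for $\triangleright$ obtained in Remark \ref{remark:calculus}, namely $\chi\triangleright w=\sum_{I\in\Lambda}[\chi\cdot_{\widehat{\rho}}W_I^{\rho*}]\cdot E^X (W_I^{\rho}\cdot w)$ for $\chi\in H^0$, $w\in Y$. Applying this to $\psi_{(1)}\triangleright y$ and to $S^0 (\psi_{(2)}^* )\triangleright z$, expanding the left $C$-valued inner product (which is left $C$-linear in the first variable and conjugate left $C$-linear in the second), and noting that every $E^X (W_I^{\rho}\cdot y)$ and $E^X (W_J^{\rho}\cdot z)$ lies in $X$, I would get
$$
{}_C \la \psi_{(1)}\triangleright y , S^0 (\psi_{(2)}^* )\triangleright z \ra
=\sum_{I, J\in\Lambda}[\psi_{(1)}\cdot_{\widehat{\rho}}W_I^{\rho*}]\cdot a_{IJ}\cdot[S^0 (\psi_{(2)}^* )\cdot_{\widehat{\rho}}W_J^{\rho*}]^* ,
$$
where $a_{IJ}={}_C \la E^X (W_I^{\rho}\cdot y), E^X (W_J^{\rho}\cdot z)\ra$. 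Since the restriction of ${}_C\la\cdot,\cdot\ra$ to $X$ coincides with the $A$-valued inner product ${}_A\la\cdot,\cdot\ra$ by the definition of strong Morita equivalence of unital inclusions (\cite[Definition 2.1]{KT4:morita}), each $a_{IJ}$ is an element of $A$.

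The next step is to rewrite the factor $[S^0 (\psi_{(2)}^* )\cdot_{\widehat{\rho}}W_J^{\rho*}]^*$. Using the compatibility of the dual $H^0$-action with the involution together with the identities $*\circ S^0 =(S^0)^{-1}\circ*$ and $(S^0)^2=\id$ valid in the finite dimensional $C^*$-Hopf algebra $H^0$, one checks that $[S^0 (\psi_{(2)}^* )\cdot_{\widehat{\rho}}W_J^{\rho*}]^*=\psi_{(2)}\cdot_{\widehat{\rho}}W_J^{\rho}$; this is exactly what the twist $S^0(\psi_{(2)}^*)$ in the statement is for. Now $a_{IJ}\in A$ and $A$ is fixed by the dual coaction $\widehat{\rho}$, so $\psi\cdot_{\widehat{\rho}}a=\epsilon^0 (\psi)a$ for $a\in A$, and consequently $\psi\cdot_{\widehat{\rho}}(cac')=(\psi_{(1)}\cdot_{\widehat{\rho}}c)\,a\,(\psi_{(2)}\cdot_{\widehat{\rho}}c')$ for $a\in A$, $c,c'\in C$; inserting this the sum collapses to
$$
\psi\cdot_{\widehat{\rho}}\Bigl(\sum_{I, J\in\Lambda}W_I^{\rho*}\cdot a_{IJ}\cdot W_J^{\rho}\Bigr) .
$$

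Finally I would identify the element inside the parentheses with ${}_C \la y, z\ra$. Pulling $W_I^{\rho*}$ and $W_J^{\rho}=(W_J^{\rho*})^*$ back inside the inner product and again using ${}_A\la\cdot,\cdot\ra={}_C\la\cdot,\cdot\ra$ on $X$ gives $\sum_{I, J}W_I^{\rho*}\cdot a_{IJ}\cdot W_J^{\rho}={}_C \la \sum_{I}W_I^{\rho*}\cdot E^X (W_I^{\rho}\cdot y), \sum_{J}W_J^{\rho*}\cdot E^X (W_J^{\rho}\cdot z)\ra$, so it suffices to prove the reconstruction identity $\sum_{I\in\Lambda}W_I^{\rho*}\cdot E^X (W_I^{\rho}\cdot w)=w$ for every $w\in Y$, i.e. the counit property $1^0\triangleright w=w$; this follows from the quasi-basis property of $\{(W_I^{\rho*}, W_I^{\rho})\}_{I\in\Lambda}$ for $E_1^{\rho, u}$ together with the defining properties of $E^X$ relative to $E_1^{\rho, u}$ and $E_1^{\sigma, v}$ in \cite[Definition 2.2]{KT4:morita} (and the normalizations built into $E^X$, $E^Y$ and $\phi$). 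Substituting back yields ${}_C \la \psi_{(1)}\triangleright y, S^0 (\psi_{(2)}^* )\triangleright z\ra=\psi\cdot_{\widehat{\rho}}{}_C \la y, z\ra$. The step I expect to be the main obstacle is the Hopf-algebraic and expectation bookkeeping: pinning down the involution/antipode conventions in $H^0$ so that $[S^0 (\psi_{(2)}^* )\cdot_{\widehat{\rho}}W_J^{\rho*}]^*$ really collapses to $\psi_{(2)}\cdot_{\widehat{\rho}}W_J^{\rho}$, and correctly invoking Conditions (1)--(6) of \cite[Definition 2.2]{KT4:morita} when verifying the reconstruction identity for a general $w\in Y$ (it is immediate for $w\in X$ from Lemma \ref{lem:fix}).
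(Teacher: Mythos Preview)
Your proposal is correct and follows essentially the same route as the paper's proof: expand both arguments of the inner product via Remark \ref{remark:calculus}, pull the $C$-coefficients outside, use $[S^0 (\psi_{(2)}^* )\cdot_{\widehat{\rho}}W_J^{\rho*}]^*=\psi_{(2)}\cdot_{\widehat{\rho}}W_J^{\rho}$ together with the fact that $A$ is fixed by $\widehat{\rho}$ to merge $\psi_{(1)}$ and $\psi_{(2)}$ into a single $\psi\cdot_{\widehat{\rho}}(\cdots)$, and then invoke the reconstruction identity $\sum_{I}W_I^{\rho*}\cdot E^X (W_I^{\rho}\cdot w)=w$. The only cosmetic difference is that the paper cites \cite[Lemma 5.4]{KT4:morita} directly for this last identity, whereas you propose to rederive it from the quasi-basis property and the axioms of $E^X$; that is exactly what \cite[Lemma 5.4]{KT4:morita} does, so there is no substantive divergence.
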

\begin{proof}By Remark \ref{remark:calculus} and \cite[Lemma 5.4]{KT4:morita},
for any $\psi\in H^0$, $y, z\in Y$,
\begin{align*}
& {}_C \la \psi_{(1)}\triangleright y \, , \, S^0 (\psi_{(2)}^* )\triangleright z \ra \\
& =\sum_{I, J\in\Lambda}{}_C \la [\psi_{(1)}\cdot_{\widehat{\rho}}W_I^{\rho*}]\cdot E^X (W_I^{\rho}\cdot y) \, , \,
[S^0 (\psi_{(2)}^* )\cdot_{\widehat{\rho}}W_J^{\rho*}]\cdot E^X (W_J^{\rho}\cdot z) \ra \\
& =\sum_{I, J\in\Lambda}[\psi_{(1)}\cdot_{\widehat{\rho}}W_I^{\rho*}]
{}_A \la E^X (W_I^{\rho}\cdot y) \, , \, E^X (W_J^{\rho}\cdot z) \ra [\psi_{(2)}\cdot_{\widehat{\rho}}W_J^{\rho}] \\
& =\sum_{I, J\in\Lambda}[\psi\cdot_{\widehat{\rho}}W_I^{\rho*} \, {}_A
\la E^X (W_I^{\rho}\cdot y) \, , \, E^X (W_J^{\rho}\cdot z ) \ra W_J^{\rho}] \\
& =\sum_{I, J\in\lambda}[\psi\cdot_{\widehat{\rho}} \, {}_C \la W_J^{\rho*}\cdot E^X (W_I^{\rho}\cdot y) \, ,\, 
W_J^{\rho*}\cdot E^X (W_J^{\rho}\cdot z ) \ra ] \\
& =\psi\cdot_{\widehat{\rho}}{}_C \la y, \, z \ra .
\end{align*}
\end{proof}

\begin{prop}\label{prop:leftcoaction}With the above notations, the linear map from $Y$ to $Y\otimes H$ induced
by the bilinear map $ ``\triangleright"$ from $H^0 \times Y$ to $Y$ defined by
$$
\psi\triangleright y =NE^Y ((1\rtimes_{\rho, u}1\rtimes_{\widehat{\rho}}\psi)\cdot \phi(y)\cdot
(1\rtimes_{\sigma, v}1\rtimes_{\widehat{\sigma}}\tau))
$$
for any $y\in Y$, $\psi\in H^0$ is a left coaction of $H$ on $Y$
with respect to $(C, \widehat{\rho})$ satisfying that
$$
\psi\triangleright x = \epsilon^0 (\psi)x
$$
for any $x\in X$, $\psi\in H^0$.
\end{prop}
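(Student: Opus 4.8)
The content of this proposition has already been spread across Lemmas \ref{lem:definition1}--\ref{lem:definition3} and \ref{lem:fix}, so the plan is essentially to assemble them, after two routine checks: that $\triangleright$ induces a linear map $Y\to Y\otimes H$, and that the counit condition holds.

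First I would note that the assignment $(\psi,y)\mapsto\psi\triangleright y$ is bilinear: for fixed $y$ the map $\psi\mapsto\psi\triangleright y$ is linear because $\psi\mapsto 1\rtimes_{\rho,u}1\rtimes_{\widehat{\rho}}\psi$ and $E^Y$ are linear, and for fixed $\psi$ the map $y\mapsto\psi\triangleright y$ is linear because $\phi$, $E^Y$ and the module actions are linear. Since $\dim H<\infty$, linear maps $H^0\to Y$ are canonically identified with $Y\otimes H$, so $\triangleright$ is induced by a unique linear $\lambda_Y\colon Y\to Y\otimes H$ with $\psi\triangleright y=(\id_Y\otimes\psi)(\lambda_Y(y))$; this is the map in the statement.

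Next I would check the defining conditions of a left coaction of $H$ on $Y$ with respect to $(C,\widehat{\rho})$ by dualizing each one. Coassociativity $(\lambda_Y\otimes\id_H)\circ\lambda_Y=(\id_Y\otimes\Delta)\circ\lambda_Y$, paired against $\psi\otimes\chi$ in its two $H$-legs, becomes $\psi\triangleright(\chi\triangleright y)=\psi\chi\triangleright y$, which is Lemma \ref{lem:definition2}. Left $C$-module compatibility $\lambda_Y(c\cdot y)=\widehat{\rho}(c)\cdot\lambda_Y(y)$, paired against $\psi$, becomes $\psi\triangleright(c\cdot y)=[\psi_{(1)}\cdot_{\widehat{\rho}}c]\cdot[\psi_{(2)}\triangleright y]$; Lemma \ref{lem:definition1} gives this for $c=a\rtimes_{\rho,u}h$, and since both sides are linear in $c$ and such elements span $C$, it holds for all $c\in C$. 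Compatibility of $\lambda_Y$ with the $C$-valued inner product is Lemma \ref{lem:definition3}. For the counit condition $(\id_Y\otimes\epsilon)\circ\lambda_Y=\id_Y$, i.e.\ $1^0\triangleright y=y$ (here $\epsilon$, viewed in $\Hom(H,\BC)\cong H^0$, is $1^0$), I would use Remark \ref{remark:calculus}: since $(\id\otimes 1^0)\circ\widehat{\rho}=\id_C$ is the counit axiom for the dual coaction, $1^0\cdot_{\widehat{\rho}}W_I^{\rho*}=W_I^{\rho*}$, hence $1^0\triangleright y=\sum_{I\in\Lambda}W_I^{\rho*}\cdot E^X(W_I^{\rho}\cdot y)$, which equals $y$ by the quasi-basis reconstruction property of the bimodule conditional expectation $E^X$ from \cite{KT4:morita}.

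Finally, the extra property $\psi\triangleright x=\epsilon^0(\psi)x$ for $x\in X$ is exactly Lemma \ref{lem:fix}, so the proof is bookkeeping, the substantive calculations having been done in the lemmas. The step I expect to need the most care is the counit condition: one must invoke the correct \emph{bimodule} reconstruction identity $\sum_{I\in\Lambda}W_I^{\rho*}\cdot E^X(W_I^{\rho}\cdot y)=y$ for $y\in Y$ — rather than the algebra identity $\sum_{I\in\Lambda}W_I^{\rho*}E_1^{\rho,u}(W_I^{\rho})=1$ already used in the proof of Lemma \ref{lem:fix} — and one must confirm that the conditions just verified are precisely those in the definition of a left coaction of $H$ on an equivalence bimodule with respect to $(C,\widehat{\rho})$ from the earlier work, so that in particular no compatibility with the right $D$-module structure is required in this one-sided notion.
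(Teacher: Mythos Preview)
Your proposal is correct and follows essentially the same approach as the paper: both reduce the proof to the already-established Lemmas \ref{lem:fix}--\ref{lem:definition3} together with the counit check $1^0\triangleright y=y$, which is obtained from Remark \ref{remark:calculus} and the bimodule reconstruction identity $\sum_{I}W_I^{\rho*}\cdot E^X(W_I^{\rho}\cdot y)=y$ (the paper cites \cite[Lemma 5.4]{KT4:morita} for this). Your write-up is more explicit about the dualization and the induced map $Y\to Y\otimes H$, but the argument is the same.
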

\begin{proof}By Lemmas \ref{lem:fix}, \ref{lem:definition1}, \ref{lem:definition2} and
\ref{lem:definition3}, it suffices to show that $1^0 \triangleright y=y$
for any $y\in Y$. Indeed, by Remark \ref{remark:calculus} and \cite[Lemma 5.4]{KT4:morita}
$$
1^0 \triangleright y=\sum_{I\in \Lambda}W_I^{\rho*}\cdot E^X (W_I^{\rho}\cdot y)=y
$$
for any $y\in Y$.
\end{proof}

We denote by $\mu$ the above left coaction induced by $``\triangleright"$.

\section{A coaction of a finite dimensional $C^*$-Hopf algebra on a unital $C^*$-algebra induced by a left coaction}\label{sec:coaction}We use the same notations as in the previous section and also we suppose that
the same assumptions as in the previous section. We note that $C=A\rtimes_{\rho, u}H$
and $D=B\rtimes_{\sigma, v}H$. We also note that $D$ is anti-isomorphic to ${}_C \BB(Y)$,
the $C^*$-algebra of all adjointable left $C$-modules maps on $Y$. We identify $D$ with
${}_C \BB(Y)$. Hence for any $T, R\in {}_C \BB(Y)$ we can write their product as follows:
For any $y\in Y$,
$$
(TR)(y)=R(T(y)) .
$$
For any $y, z\in Y$, let $\Theta_{y, z}$ be
the rank-one left $C$-module map on $Y$ defined by
$$
\Theta_{y, z}(x)={}_C \la x, z \ra\cdot y
$$
for any $x\in Y$. Then since $Y$ is of finite type in the sense of Kajiwara
and Watatani \cite{KW1:bimodule}, ${}_C \BB(Y)$ is the linear span of the above rank-one
left $C$-module maps on $Y$ by \cite {KW1:bimodule}. We define a bilinear map
$``\rightharpoonup"$ from $H^0 \times D$ to $D$ so that
$$
\psi\rightharpoonup\la y, z \ra_D =\la S^0 (\psi_{(1)}^* )\triangleright y \, , \, \psi_{(2)}\triangleright z \ra_D
$$
for any $\psi \in H^0$ and $y, z\in Y$. Hence since $\la y, z \ra_D =\Theta_{z, y}$ for any
$y, z\in Y$, we define $``\rightharpoonup"$
as follows: For any $\psi\in H^0$, $y, z\in Y$,
$$
\psi\rightharpoonup \Theta_{y, z}=\Theta_{[\psi_{(2)}\triangleright y]\,, \,[S^0 (\psi_{(1)}^* )\triangleright z]} .
$$

\begin{lemma}\label{lem:identity}With the above notations, for any $\psi\in H^0$ and $T\in {}_C \BB(Y)$,
$$
[\psi\rightharpoonup T](x)=\psi_{(2)}\triangleright T(S^0 (\psi_{(1)})\triangleright x)
$$
for any $x\in Y$.
\end{lemma}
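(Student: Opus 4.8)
The plan is to verify the claimed formula for $[\psi\rightharpoonup T](x)$ directly on rank-one operators, since by Kajiwara--Watatani every $T\in{}_C\BB(Y)$ is a finite linear sum of operators $\Theta_{y,z}$, and both sides of the asserted identity are linear in $T$. So it suffices to check that for any $y,z\in Y$ and any $x\in Y$,
$$
[\psi\rightharpoonup\Theta_{y,z}](x)=\psi_{(2)}\triangleright\Theta_{y,z}(S^0(\psi_{(1)})\triangleright x).
$$

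First I would compute the left-hand side using the definition $\psi\rightharpoonup\Theta_{y,z}=\Theta_{[\psi_{(2)}\triangleright y],[S^0(\psi_{(1)}^*)\triangleright z]}$, so that
$$
[\psi\rightharpoonup\Theta_{y,z}](x)={}_C\la x\,,\,S^0(\psi_{(1)}^*)\triangleright z\ra\cdot[\psi_{(2)}\triangleright y].
$$
Then I would expand the right-hand side: $\Theta_{y,z}(S^0(\psi_{(1)})\triangleright x)={}_C\la S^0(\psi_{(1)})\triangleright x\,,\,z\ra\cdot y$, and apply $\psi_{(2)}\triangleright(-)$ to it. The key is that $\psi_{(2)}\triangleright(c\cdot y')=[(\psi_{(2)})_{(1)}\cdot_{\widehat\rho}c]\cdot[(\psi_{(2)})_{(2)}\triangleright y']$ for $c\in C$ by Lemma~\ref{lem:definition1} (using that ${}_C\la -,-\ra$ takes values in $C$), giving after relabelling via coassociativity
$$
\psi_{(3)}\triangleright\bigl({}_C\la S^0(\psi_{(1)})\triangleright x\,,\,z\ra\cdot y\bigr)
=\bigl[\psi_{(3)}\cdot_{\widehat\rho}{}_C\la S^0(\psi_{(1)})\triangleright x\,,\,z\ra\bigr]\cdot[\psi_{(4)}\triangleright y].
$$

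The heart of the argument is then to simplify $\psi_{(3)}\cdot_{\widehat\rho}{}_C\la S^0(\psi_{(1)})\triangleright x\,,\,z\ra$. Here I would invoke Lemma~\ref{lem:definition3} in the form $\chi\cdot_{\widehat\rho}{}_C\la y',z'\ra={}_C\la \chi_{(1)}\triangleright y'\,,\,S^0(\chi_{(2)}^*)\triangleright z'\ra$, applied with $\chi=\psi_{(3)}$, $y'=S^0(\psi_{(1)})\triangleright x$ and $z'=z$. This turns the term into ${}_C\la \psi_{(3)}\triangleright(S^0(\psi_{(1)})\triangleright x)\,,\,S^0(\psi_{(4)}^*)\triangleright z\ra$, and by Lemma~\ref{lem:definition2} the first slot collapses to $(\psi_{(3)}S^0(\psi_{(1)}))\triangleright x$. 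The main obstacle — really the only delicate point — is the Hopf-algebra bookkeeping: one must check, using the antipode axioms $h_{(1)}S(h_{(2)})=\epsilon(h)1=S(h_{(1)})h_{(2)}$ for $H^0$ together with coassociativity, that $\psi_{(3)}S^0(\psi_{(1)})\otimes\psi_{(4)}^*\otimes\psi_{(5)}$ reduces so that the surviving index structure is exactly $S^0(\psi_{(1)}^*)$ in the second slot of ${}_C\la-,-\ra$ and $\psi_{(2)}$ multiplying $y$, matching the left-hand side. Concretely, after the collapse $\psi_{(3)}S^0(\psi_{(1)})=\epsilon^0(\psi_{(2)})\cdot(\text{something})$ the counit absorbs one leg and the remaining legs realign to give ${}_C\la x\,,\,S^0(\psi_{(1)}^*)\triangleright z\ra\cdot[\psi_{(2)}\triangleright y]$, which is precisely $[\psi\rightharpoonup\Theta_{y,z}](x)$. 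I would carry out this index chase carefully, as it is the step where a misplaced Sweedler leg is easiest to make; everything else is routine application of the four lemmas of Section~\ref{sec:left}.
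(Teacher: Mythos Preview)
Your proposal is correct and is essentially the paper's proof run in reverse: the paper starts from $[\psi\rightharpoonup\Theta_{y,z}](x)$, inserts the identity $\psi_{(2)}S^0(\psi_{(1)})=\epsilon^0(\psi_{(1)}')1^0$ in the first slot of ${}_C\la-,-\ra$, and then applies Lemmas~\ref{lem:definition3} and~\ref{lem:definition1} to arrive at $\psi_{(2)}\triangleright\Theta_{y,z}(S^0(\psi_{(1)})\triangleright x)$, whereas you expand the latter and collapse back. Two cautions on your bookkeeping: first, your Sweedler indices drift (e.g.\ the displayed $\psi_{(3)}\cdot_{\widehat\rho}{}_C\la S^0(\psi_{(1)})\triangleright x,z\ra$ and later $\psi_{(3)}S^0(\psi_{(1)})$ skip $\psi_{(2)}$; after one application of Lemma~\ref{lem:definition1} the legs should read $\psi_{(1)},\psi_{(2)},\psi_{(3)}$, and after Lemma~\ref{lem:definition3} the product in the first slot is $\psi_{(2)}S^0(\psi_{(1)})$ with \emph{adjacent} legs); second, the identity you actually need is $\chi_{(2)}S^0(\chi_{(1)})=\epsilon^0(\chi)1^0$, which is not one of the two standard antipode axioms you cite but follows from $(S^0)^2=\id$, valid for finite-dimensional $C^*$-Hopf algebras.
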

\begin{proof}Since ${}_C \BB(Y)$ is the linear span of the rank-one
left $C$-module maps on $Y$, it suffices to show that
$$
[\psi\rightharpoonup \Theta_{y, z}](x)=\psi_{(2)}\triangleright \Theta_{y, z}(S^0 (\psi_{(1)})\triangleright x)
$$
for any $\psi\in H^0$ and $x, y, z\in Y$. For any $\psi\in H^0$ and $x, y, z\in Y$,
\begin{align*}
[\psi\rightharpoonup\Theta_{y, z}](x) & =\Theta_{[\psi_{(2)}\triangleright y] , [S^0 (\psi_{(1)}^* )\triangleright z]}(x)
={}_C \la x \, , \, S^0 (\psi_{(1)}^* )\triangleright z \ra\cdot [\psi_{(2)}\triangleright y] \\
& ={}_C \la \psi_{(2)}S^0 (\psi_{(1)})\triangleright x \, , \, S^0 (\psi_{(3)}^* )\triangleright z \ra
\cdot [\psi_{(4)}\triangleright y] \\
& =[\psi_{(2)}\cdot_{\widehat{\rho}}{}_C \la S^0 (\psi_{(1)})\triangleright x \, , \, z \ra ]\cdot [\psi_{(3)}\triangleright y] \\
& =\psi_{(2)}\triangleright ({}_C \la S^0 (\psi_{(1)})\triangleright x \, ,\, z \ra\cdot y) \\
& =\psi_{(2)}\triangleright \Theta_{y, z}(S^0 (\psi_{(1)})\triangleright x) .
\end{align*}
Hence we obtain the conclusion.
\end{proof}

\begin{lemma}\label{lem:action}With the above notations, the bilinear map $`` \rightharpoonup "$
from $H^0 \times D$ to $D$ is an action of $H^0$ on $D$.
\end{lemma}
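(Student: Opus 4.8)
The plan is to verify the two defining properties of an action of a Hopf algebra on a $C^*$-algebra for the bilinear map $``\rightharpoonup"$: namely that $1^0 \rightharpoonup T = T$ for all $T \in {}_C\BB(Y)$, and that $\psi \rightharpoonup (\chi \rightharpoonup T) = \psi\chi \rightharpoonup T$ for all $\psi, \chi \in H^0$. I would work entirely through the formula from Lemma \ref{lem:identity}, $[\psi \rightharpoonup T](x) = \psi_{(2)} \triangleright T(S^0(\psi_{(1)}) \triangleright x)$, since this reduces everything to computations with the left coaction $\triangleright$, whose properties are already established in Section \ref{sec:left}.

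First I would check $1^0 \rightharpoonup T = T$. Using Lemma \ref{lem:identity} with $\psi = 1^0$, we get $[1^0 \rightharpoonup T](x) = 1^0{}_{(2)} \triangleright T(S^0(1^0{}_{(1)}) \triangleright x)$; since $\Delta^0(1^0) = 1^0 \otimes 1^0$ and $S^0(1^0) = 1^0$, this is $1^0 \triangleright T(1^0 \triangleright x)$, which equals $T(x)$ because $\mu$ is a left coaction so $1^0 \triangleright y = y$ for all $y \in Y$ (the identity verified in the proof of Proposition \ref{prop:leftcoaction}). Next, for the multiplicativity, I would compute $[\psi \rightharpoonup (\chi \rightharpoonup T)](x)$ by applying Lemma \ref{lem:identity} twice:
\begin{align*}
[\psi \rightharpoonup (\chi \rightharpoonup T)](x) & = \psi_{(2)} \triangleright [\chi \rightharpoonup T](S^0(\psi_{(1)}) \triangleright x) \\
& = \psi_{(2)} \triangleright \bigl( \chi_{(2)} \triangleright T(S^0(\chi_{(1)}) \triangleright (S^0(\psi_{(1)}) \triangleright x)) \bigr).
\end{align*}
Then I would use Lemma \ref{lem:definition2} (which says $\psi \triangleright (\chi \triangleright y) = \psi\chi \triangleright y$) twice — once to combine $S^0(\chi_{(1)}) \triangleright (S^0(\psi_{(1)}) \triangleright x) = S^0(\chi_{(1)})S^0(\psi_{(1)}) \triangleright x = S^0(\psi_{(1)}\chi_{(1)}) \triangleright x$ (using that $S^0$ is an anti-homomorphism), and once to combine the outer two $\triangleright$'s into $\psi_{(2)}\chi_{(2)} \triangleright T(\cdots)$. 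After relabeling the Sweedler indices via coassociativity, this becomes $(\psi\chi)_{(2)} \triangleright T(S^0((\psi\chi)_{(1)}) \triangleright x) = [\psi\chi \rightharpoonup T](x)$, again by Lemma \ref{lem:identity}.

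The remaining point, which is the only genuinely non-routine part, is that $``\rightharpoonup"$ is well-defined as a map $H^0 \times D \to D$ and lands in $D = {}_C\BB(Y)$ — i.e. that $\psi \rightharpoonup T$ is actually an adjointable left $C$-module map. Since ${}_C\BB(Y)$ is the linear span of the rank-one operators $\Theta_{y,z}$ (as $Y$ is of finite type by \cite{KW1:bimodule}), and the definition $\psi \rightharpoonup \Theta_{y,z} = \Theta_{[\psi_{(2)} \triangleright y],[S^0(\psi_{(1)}^*) \triangleright z]}$ manifestly produces elements of ${}_C\BB(Y)$, I would need to confirm that this is consistent with the $D$-valued inner product formula $\psi \rightharpoonup \la y, z\ra_D = \la S^0(\psi_{(1)}^*) \triangleright y, \psi_{(2)} \triangleright z\ra_D$ and independent of the representation of a given $T$ as a sum of rank-one operators; this is essentially the content already packaged into the setup preceding Lemma \ref{lem:identity}, together with Lemma \ref{lem:definition3}, so I expect it to be a short verification rather than a real obstacle. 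I would also remark that $``\rightharpoonup"$ is linear in $\psi$ and $*$-preserving in the appropriate sense if that is needed for the word ``action'', but the two algebraic identities above are the heart of the matter.
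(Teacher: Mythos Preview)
Your proposal correctly verifies the $H^0$-\emph{module} structure on $D$, but that is not what ``action of $H^0$ on $D$'' means here. Since the purpose of $\rightharpoonup$ is to induce a coaction $\beta : D \to D\otimes H$ of the $C^*$-Hopf algebra $H$ on the $C^*$-algebra $D$, the map $\rightharpoonup$ must make $D$ an $H^0$-module \emph{algebra}: in addition to $1^0 \rightharpoonup T = T$ and $\psi \rightharpoonup(\chi\rightharpoonup T) = \psi\chi\rightharpoonup T$, one needs
\[
\psi\rightharpoonup(TR)=[\psi_{(1)}\rightharpoonup T][\psi_{(2)}\rightharpoonup R],\qquad
\psi\rightharpoonup I_Y=\epsilon^0(\psi)I_Y,\qquad
[\psi\rightharpoonup T]^* = S^0(\psi^*)\rightharpoonup T^* .
\]
You omit the first two entirely and wave at the third; but these are precisely the conditions that make $\beta$ an algebra $*$-homomorphism, and without them the subsequent construction of the covariant system $(C,D,Y,\widehat{\rho},\beta,\mu,H)$ in Section~\ref{sec:coaction} collapses.

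The paper's proof checks all five conditions, each via the formula of Lemma~\ref{lem:identity}. The multiplicativity $\psi\rightharpoonup(TR)=[\psi_{(1)}\rightharpoonup T][\psi_{(2)}\rightharpoonup R]$ is the most substantive step (recall that the product in ${}_C\BB(Y)$ is written in reverse: $(TR)(y)=R(T(y))$), and the $*$-compatibility requires a careful chain using Lemma~\ref{lem:definition3}. Your computations for the two module axioms are fine and match the paper's, but you should not dismiss the remaining three as peripheral---they are the heart of the matter.
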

\begin{proof}
Let $T, R\in {}_C \BB(Y)$ and $\psi, \chi \in H^0$.
For any $y\in Y$,
\begin{align*}
([\psi_{(1)}\rightharpoonup T][\psi_{(2)}\rightharpoonup R])(y))
& =[\psi_{(2)}\rightharpoonup R]([\psi_{(1)}\rightharpoonup T](y)) \\
& =\psi_{(3)}\triangleright R(S^0 (\psi_{(2)})\triangleright[\psi_{(1)}\rightharpoonup T](y)) \\
& =\psi_{(4)}\triangleright R(S^0 (\psi_{(3)})\psi_{(2)}\triangleright T(S^0 (\psi_{(1)})\triangleright y)) \\
& =\psi_{(2)}\triangleright R(T(S^0 (\psi_{(1)})\triangleright y)) \\
& =\psi_{(2)}\triangleright(TR)(S^0 (\psi_{(1)})\triangleright y) \\
& =[\psi\rightharpoonup TR](y) .
\end{align*}
Hence we obtain that $\psi\rightharpoonup TR=[\psi_{(1)}\rightharpoonup T][\psi_{(2)}\rightharpoonup R]$.
For any $y\in Y$,
\begin{align*}
[\psi\rightharpoonup[\chi\rightharpoonup T]](y) & =
\psi_{(2)}\triangleright[\chi\rightharpoonup T](S^0 (\psi_{(1)})\triangleright y) \\
& =\psi_{(2)}\triangleright\chi_{(2)}\triangleright T(S^0 (\chi_{(1)})\triangleright S^0 (\psi_{(1)})\triangleright y) \\
& =\psi_{(2)}\chi_{(2)}\triangleright T(S^0 (\psi_{(1)}\chi_{(1)})\triangleright y) \\
& =[\psi\chi\rightharpoonup T](y) .
\end{align*}
Hence we obtain that $\psi\rightharpoonup[\chi\rightharpoonup T]=\psi\chi\rightharpoonup T$.
Let $I_Y$ be the identity map on $Y$. Then for any $y\in Y$
$$
[\psi\rightharpoonup I_Y ](y)=\psi_{(2)}\triangleright I_Y (S^0 (\psi_{(1)})\triangleright y)
=\epsilon^0 (\psi)y .
$$
Hence $\psi\rightharpoonup I_Y =\epsilon^0 (\psi)I_Y$. Also,
for any $y\in Y$, $[1^0 \rightharpoonup T](y)=T(y)$. Thus $1^0 \rightharpoonup T=T$.
Furthermore, for any $y, z\in Y$,
\begin{align*}
{}_C \la [\psi\rightharpoonup T]^* (y) \, , \, z \ra & ={}_C \la y \, , \, [\psi\rightharpoonup T](z) \ra \\
& ={}_C \la y \, , \, \psi_{(2)}\triangleright T(S^0 (\psi_{(1)})\triangleright z) \ra \\
& ={}_C \la S^0 (\psi_{(3)}^* )\psi_{(4)}^* \triangleright y \, , \, \psi_{(2)}
\triangleright T(S^0 (\psi_{(1)})\triangleright z) \ra \\
& = S^0 (\psi_{(2)}^* )\cdot_{\widehat{\rho}}\,{}_C \la \psi_{(3)}^* \triangleright y \, , \, 
T(S^0 (\psi_{(1)})\triangleright z) \ra \\
& = S^0 (\psi_{(2)}^* )\cdot_{\widehat{\rho}}\,{}_C \la T^* (\psi_{(3)}^* \triangleright y) \, , \, 
S^0 (\psi_{(1)})\triangleright z \ra \\
& ={}_C \la S^0 (\psi_{(3)}^* )\triangleright T^* (\psi_{(4)}^* \triangleright y) \, , \, \psi_{(2)}S^0 (\psi_{(1)})
\triangleright z \ra \\
& ={}_C \la S^0 (\psi_{(1)}^* )\triangleright T^* (\psi_{(2)}^* \triangleright y) \, , \, z \ra \\
& ={}_C \la [S^0 (\psi^* )\rightharpoonup T^* ](y) \, ,\, z \ra.
\end{align*}
Hence we can see that $[\psi\rightharpoonup T]^* =S^0 (\psi^* )\rightharpoonup T^* $.
Therefore we obtain the conclusion.
\end{proof}

By Lemma \ref {lem:action}, the map $``\rightharpoonup"$ is an action of $H^0$ on $D$.
We denote by $\beta$ the coaction of $H$ on $D$ induced by the action $``\rightharpoonup"$
of $H^0$ on $D$. By the definition of the action $``\rightharpoonup"$, the left coaction $\mu$
of $H$ on $Y$ induced by the action $``\triangleright"$ is also a right coaction of $H$ on $Y$ with respect
to $(D, \beta)$. Thus $\mu$ is a coaction of $H$ on $Y$ with
respect to $(C, D, \widehat{\rho}, \beta)$. Hence we can see that $\widehat{\rho}$ is strongly
Morita equivalent to $\beta$. By \cite [Section 4]{KT4:morita}, the unital inclusions
$C\subset C_1 (=C\rtimes_{\widehat{\rho}}H^0)$ and $D\subset D\rtimes_{\beta}H^0$ are
strongly Morita equivalent with respect to the $C_1 -D\rtimes_{\beta}H^0$-equivalence
bimodule $Y\rtimes_{\mu}H^0$ and its closed subspace $Y$. Since the unital
inclusions $C\subset C_1$ and $D\subset D_1$ are also strongly Morita equivalent with
respect to the $C_1 -D_1$-equivalence bimodule $Y_1$ and its closed subspace $Y$
by \cite [Corollary 6.3]{KT4:morita}, we can see that the unital inclusions $D\subset D_1$ and
$D\subset D\rtimes_{\beta}H^0 $ are strongly Morita equivalent with respect to
the $D_1 -D\rtimes_{\beta}H^0$-equivalence bimodule
$\widetilde{Y_1}\otimes_{C_1} (Y\rtimes_{\mu}H^0 )$
and its closed subspace $\widetilde{Y}\otimes_C Y\cong D$ by
\cite [Proposition 2.2]{KT4:morita}. 

\section{The exterior equivalence and the strong Morita equivalence}\label{sec:exterior}
First, we shall present some  basic properties on coactions of a finite
dimensional $C^*$-Hopf algebra on a unital $C^*$-algebra and their
exterior equivalence and strong Morita equivalence.
\par
Let $H$ and $K$ be finite dimensional $C^*$-Hopf
algebras and let $H^0$ and $K^0$ be their dual $C^*$-Hopf algebras, respectively.
Let $A$ and $B$ unital $C^*$-algebras and
let $\pi$ be an isomorphism of $B$ onto $A$. Let $\lambda^0$
be a $C^*$-Hopf algebra isomorphism of $K^0$ onto $H^0$.
Let $(\rho, u)$ be a coaction of $K^0$ on $B$ and
let $\rho_{\pi, \lambda^0}$ be the homomorphism of $A$ to $A\otimes H^0$
defined by
$$
\rho_{\pi, \lambda^0}=(\pi\otimes \lambda^0 )\circ\rho\circ\pi^{-1} .
$$
Let $u_{\pi, \lambda^0}$ be the unitary element in $A\otimes H^0 \otimes H^0$ defined by
$$
u_{\pi, \lambda^0}=(\pi\otimes\lambda^0 \otimes\lambda^0 )(u) .
$$

\begin{lemma}\label{lem:induced}With the above notations, $(\rho_{\pi, \lambda^0} ,\, u_{\pi, \lambda^0})$
is a twisted coaction of $H^0$ on $A$.
\end{lemma}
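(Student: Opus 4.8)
The statement to be verified is that $(\rho_{\pi,\lambda^0}, u_{\pi,\lambda^0})$ satisfies the three axioms (1)--(3) in the definition of a twisted coaction of $H^0$ on $A$, plus the requirement that $\rho_{\pi,\lambda^0}$ is a weak coaction. The overall strategy is entirely computational: transport each defining identity for $(\rho, u)$ along the pair $(\pi, \lambda^0)$ and use that $\pi$ is a $*$-isomorphism and $\lambda^0$ a $C^*$-Hopf algebra isomorphism (so it intertwines $\Delta^0$, $\epsilon^0$, $S^0$ of $K^0$ with those of $H^0$). The only mild subtlety is bookkeeping: $\rho_{\pi,\lambda^0} = (\pi\otimes\lambda^0)\circ\rho\circ\pi^{-1}$ inserts $\pi^{-1}$ on the right, so whenever I compose two copies of $\rho_{\pi,\lambda^0}$ the inner $\pi$ and $\pi^{-1}$ cancel cleanly, which is exactly what makes the axioms transport.

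First I would check that $\rho_{\pi,\lambda^0}$ is a weak coaction: it is a unital $*$-homomorphism $A\to A\otimes H^0$ as a composite of such maps, and the counit condition $(\id_A\otimes\epsilon^0)\circ\rho_{\pi,\lambda^0} = \id_A$ follows from $(\id_B\otimes\epsilon^0_{K^0})\circ\rho = \id_B$ together with $\epsilon^0_{H^0}\circ\lambda^0 = \epsilon^0_{K^0}$ and $\pi\circ\pi^{-1}=\id_A$. Next, for axiom (1), I compute
$(\rho_{\pi,\lambda^0}\otimes\id)\circ\rho_{\pi,\lambda^0}
= (\pi\otimes\lambda^0\otimes\lambda^0)\circ(\rho\otimes\id)\circ\rho\circ\pi^{-1}$,
then substitute $(\rho\otimes\id)\circ\rho = \Ad(u)\circ(\id\otimes\Delta^0_{K^0})\circ\rho$, and finally use $(\lambda^0\otimes\lambda^0)\circ\Delta^0_{K^0} = \Delta^0_{H^0}\circ\lambda^0$ and the fact that $\pi\otimes\lambda^0\otimes\lambda^0$ carries $\Ad(u)$ to $\Ad(u_{\pi,\lambda^0})$; this yields $\Ad(u_{\pi,\lambda^0})\circ(\id\otimes\Delta^0_{H^0})\circ\rho_{\pi,\lambda^0}$, as required.

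Axiom (2) is the 2-cocycle identity and is handled the same way: apply $\pi\otimes\lambda^0\otimes\lambda^0\otimes\lambda^0$ (four legs) to the cocycle identity for $(\rho,u)$, noting that the term $(\rho\otimes\id\otimes\id)(u)$ becomes $(\rho_{\pi,\lambda^0}\otimes\id\otimes\id)(u_{\pi,\lambda^0})$ because the inner $\pi^{-1}\circ\pi$ on $B$ cancels, that $(\id\otimes\Delta^0\otimes\id)$ and $(\id\otimes\id\otimes\Delta^0)$ transport via $\lambda^0\otimes\lambda^0$, and that $u\otimes 1^0$ transports to $u_{\pi,\lambda^0}\otimes 1^0$. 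Axiom (3) is the normalization $(\id\otimes h\otimes\epsilon^0)(u) = (\id\otimes\epsilon^0\otimes h)(u) = \epsilon^0(h)1$ for $h\in H$; here I use that $\lambda^0\colon K^0\to H^0$ is an isomorphism, so evaluating a leg of $u_{\pi,\lambda^0}$ against $h\in H$ is the same as evaluating the corresponding leg of $u$ against $\lambda^{0*}(h) = h\circ\lambda^0 \in K$ (the transpose), and $\pi$ applied to $\epsilon^0_{K^0}(h\circ\lambda^0)1_B = \epsilon^0_{H^0}(h)1_B$ gives $\epsilon^0_{H^0}(h)1_A$.

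I do not anticipate a genuine obstacle — every step is a transport of structure along isomorphisms — but the step most prone to sign/leg errors is axiom (2), where one must be careful that the inner $\pi^{-1}$ in $\rho_{\pi,\lambda^0}$ cancels against the $\pi$ coming from the first leg of $u_{\pi,\lambda^0}$ in the term $(\rho_{\pi,\lambda^0}\otimes\id\otimes\id)(u_{\pi,\lambda^0})$; verifying this cancellation carefully is the crux of the computation.
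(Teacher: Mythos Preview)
Your proposal is correct and is precisely the routine verification the paper alludes to; the paper's own proof consists solely of the sentence ``This is immediate by routine computations.'' Your careful bookkeeping of how $\pi$ and $\lambda^0$ transport each axiom (including the cancellation in axiom (2) and the use of the transpose of $\lambda^0$ in axiom (3)) is exactly what that sentence is hiding.
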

\begin{proof}This is immediate by routine computations.
\end{proof}

We call the above pair $(\rho_{\pi, \lambda^0}, u_{\pi, \lambda^0})$
the twisted coaction of $H^0$ on $A$ induced by
$(\rho, u)$ and $\pi$, $\lambda^0$.
\par
Let $(\sigma, v)$ be a coaction of $K^0$ on $B$ and $(\sigma_{\pi, \lambda^0}, v_{\pi, \lambda^0})$
the twisted coaction of $H^0$ on $A$ induced by $(\sigma, v)$ and $\pi$, $\lambda^0$.

\begin{lemma}\label{lem:exterior}With the above notations, if $(\rho, u)$ is exterior equivalent
$(\sigma, v)$, then $(\rho_{\pi, \lambda^0} ,\, u_{\pi, \lambda^0})$ is exterior equivalent to
$(\sigma_{\pi, \lambda^0}, v_{\pi, \lambda^0})$.
\end{lemma}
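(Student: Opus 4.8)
The plan is to transport the exterior-equivalence datum through the isomorphisms $\pi$ and $\lambda^0$. Recall that $(\rho, u)$ being exterior equivalent to $(\sigma, v)$ (as twisted coactions of $K^0$ on $B$) means, as in \cite{KT2:coaction}, that there is a unitary $w\in B\otimes K^0$ with $(\id\otimes\epsilon^0)(w)=1$ such that $\sigma=\Ad(w)\circ\rho$ and the cocycle relation $(w\otimes 1^0)(\rho\otimes\id)(w)\,u=v\,(\id\otimes\Delta^0)(w)$ holds. First I would put $w_{\pi, \lambda^0}=(\pi\otimes\lambda^0)(w)\in A\otimes H^0$; since $\pi$ is a $*$-isomorphism and $\lambda^0$ a unital $*$-isomorphism, $w_{\pi, \lambda^0}$ is a unitary, and it is the natural candidate for implementing the exterior equivalence between $(\rho_{\pi, \lambda^0}, u_{\pi, \lambda^0})$ and $(\sigma_{\pi, \lambda^0}, v_{\pi, \lambda^0})$.

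Next I would check the three conditions for $w_{\pi,\lambda^0}$. For $\sigma_{\pi, \lambda^0}=\Ad(w_{\pi, \lambda^0})\circ\rho_{\pi, \lambda^0}$, one simply applies the $*$-homomorphism $\pi\otimes\lambda^0$ to the identity $\sigma(\pi^{-1}(a))=w\,\rho(\pi^{-1}(a))\,w^*$ and uses multiplicativity and that $\pi\otimes\lambda^0$ preserves adjoints, recalling $\sigma_{\pi,\lambda^0}=(\pi\otimes\lambda^0)\circ\sigma\circ\pi^{-1}$ and likewise for $\rho_{\pi,\lambda^0}$. For the counit condition, writing $w=\sum_i b_i\otimes k_i$ one has $(\id\otimes\epsilon^0)(w_{\pi, \lambda^0})=\sum_i\pi(b_i)\,\epsilon^0(\lambda^0(k_i))=\pi\bigl(\sum_i b_i\,\epsilon^0(k_i)\bigr)=\pi(1)=1$, where the middle step is the compatibility $\epsilon^0\circ\lambda^0=\epsilon^0$ of counits coming from $\lambda^0$ being a $C^*$-Hopf algebra isomorphism.

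The cocycle relation is the only place that needs slightly more than bookkeeping, and there the key point is that $\lambda^0$ intertwines the comultiplications, $(\lambda^0\otimes\lambda^0)\circ\Delta^0=\Delta^0\circ\lambda^0$. I would apply $\pi\otimes\lambda^0\otimes\lambda^0$ to $(w\otimes 1^0)(\rho\otimes\id)(w)\,u=v\,(\id\otimes\Delta^0)(w)$ and compute leg by leg: using $(\pi\otimes\lambda^0\otimes\lambda^0)\circ(\rho\otimes\id)=(\rho_{\pi, \lambda^0}\otimes\id)\circ(\pi\otimes\lambda^0)$, which is immediate from $\rho_{\pi, \lambda^0}=(\pi\otimes\lambda^0)\circ\rho\circ\pi^{-1}$, together with $\lambda^0(1^0)=1^0$ and $u_{\pi,\lambda^0}=(\pi\otimes\lambda^0\otimes\lambda^0)(u)$, the left side becomes $(w_{\pi, \lambda^0}\otimes 1^0)(\rho_{\pi, \lambda^0}\otimes\id)(w_{\pi, \lambda^0})\,u_{\pi, \lambda^0}$; using the comultiplication compatibility of $\lambda^0$ and $v_{\pi,\lambda^0}=(\pi\otimes\lambda^0\otimes\lambda^0)(v)$, the right side becomes $v_{\pi, \lambda^0}\,(\id\otimes\Delta^0)(w_{\pi, \lambda^0})$. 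Hence $(\rho_{\pi, \lambda^0}, u_{\pi, \lambda^0})$ and $(\sigma_{\pi, \lambda^0}, v_{\pi, \lambda^0})$ are exterior equivalent via $w_{\pi, \lambda^0}$, which is the claim. The only obstacle is notational --- keeping straight which tensor factor each of $\pi$, $\lambda^0$, $\rho$, $\Delta^0$ acts on --- so, consistent with the authors' remark on Lemma \ref{lem:induced}, this is essentially a routine computation.
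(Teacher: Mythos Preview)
Your proof is correct and follows essentially the same route as the paper: both transport the implementing unitary via $w_{\pi,\lambda^0}=(\pi\otimes\lambda^0)(w)$, verify $\sigma_{\pi,\lambda^0}=\Ad(w_{\pi,\lambda^0})\circ\rho_{\pi,\lambda^0}$ directly, and then obtain the cocycle relation by applying $\pi\otimes\lambda^0\otimes\lambda^0$ and invoking the two key identities $(\lambda^0\otimes\lambda^0)\circ\Delta_K^0=\Delta_H^0\circ\lambda^0$ and $(\pi\otimes\lambda^0\otimes\lambda^0)\circ(\rho\otimes\id)=(\rho_{\pi,\lambda^0}\otimes\id)\circ(\pi\otimes\lambda^0)$. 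The only minor difference is that you explicitly record the counit condition on $w_{\pi,\lambda^0}$, which the paper leaves implicit.
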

\begin{proof}Let $\Delta_H^0$ and $\Delta_K^0$ be the comultiplications of $H^0$ and $K^0$,
respectively. Since $(\rho, u)$ and $(\sigma, v)$ are exterior equivalent, there is a unitary
element $w\in B\otimes K^0$ such that
$$
\sigma =\Ad (w)\circ \rho, \quad
v=(w\otimes 1^0 )(\rho\otimes \id_{K^0})(w)u(\id_B \otimes\Delta_K^0 )(w)^* .
$$
Since $\rho=(\pi\otimes\lambda^0 )^{-1}\circ\rho_{\pi, \lambda^0}\circ\pi$ and
$\sigma=(\pi\otimes\lambda^0 )^{-1}\circ\sigma_{\pi. \lambda^0 }\circ\pi$,
$$
(\pi\otimes\lambda^0 )^{-1}\circ\sigma_{\pi, \lambda^0}\circ\pi
=\Ad(w)\circ(\pi\otimes\lambda^0 )^{-1}\circ\rho_{\pi, \lambda^0}\circ\pi .
$$
Hence
$$
\sigma_{\pi, \lambda^0 }=\Ad((\pi\otimes\lambda^0 )(w))\circ\rho_{\pi, \lambda^0 } .
$$
Furthermore,
\begin{align*}
v_{\pi, \lambda^0}
=(\pi\otimes\lambda^0 \otimes\lambda^0 )(v) 
&=((\pi\otimes\lambda^0 )(w)\otimes 1^0 )(\pi\otimes\lambda^0 \otimes\lambda^0 )((\rho\otimes\id_{K^0})(w)) \\
& \times u_{\pi, \lambda^0 }(\pi\otimes\lambda^0 \otimes\lambda^0 )((\id_B \otimes\Delta_K^0 )(w^* )) .
\end{align*}
Since $(\lambda^0 \otimes\lambda^0 )\circ\Delta_K^0 =\Delta_H^0 \circ\lambda^0 $,
$$
(\pi\otimes\lambda^0 \otimes\lambda^0 )((\id_B \otimes\Delta_K^0 )(w^* ))
=(\id_A \otimes\Delta_H^0 )((\pi\otimes\lambda^0 )(w^* )) .
$$
Since $w\in B\otimes H^0$, we can write that $w=\sum_i b_i\otimes\phi_i $,
where $b_i \in B$, $\phi_i \in H^0$. Then
$$
(\pi\otimes\lambda^0 \otimes\lambda^0 )((\rho\otimes\id_{K^0})(w))
=\sum_i (\pi\otimes\lambda^0 )(\rho(b_i ))\otimes\lambda^0 (\phi_i ) .
$$
On the other hand
\begin{align*}
(\rho_{\pi, \lambda^0}\otimes\id_{H^0})((\pi\otimes\lambda^0 )(w)) & =
(((\pi\otimes\lambda^0 )\circ\rho\circ\pi^{-1})\otimes\id_{H^0})((\pi\otimes\lambda^0 )(w)) \\
& =(((\pi\otimes\lambda^0 )\circ\rho\circ\pi^{-1})\otimes\id_{H^0})(\sum_i \pi(b_i )\otimes\lambda^0 (\phi_i )) \\
& =\sum_i ((\pi\otimes\lambda^0 )\circ\rho)(b_i )\otimes\lambda^0 (\phi_i ) \\
& =\sum_i (\pi\otimes\lambda^0 )(\rho(b_i ))\otimes\lambda^0 (\phi_i ) .
\end{align*}
Thus
$$
(\pi\otimes\lambda^0 \otimes\lambda^0 )((\rho\otimes\id_{K^0})(w))
=(\rho_{\pi, \lambda^0 }\otimes\id_{H^0}((\pi\otimes\lambda^0 )(w)) .
$$
Hence
\begin{align*}
v_{\pi, \lambda^0 } &= ((\pi\otimes\lambda^0 )(w)\otimes 1^0 )(\rho_{\pi, \lambda^0 }\otimes\id_{H^0 })
((\pi\otimes\lambda^0 )(w))u_{\pi, \lambda^0} \\
& \times (\id_A \otimes\Delta_H^0 )((\pi\otimes\lambda^0 )(w^* )) .
\end{align*}Therefore, we obtain the conclusion.
\end{proof}

Let $(\rho, u)$ be a twisted coaction of $H^0$ on $B$ and let $\pi$ be an isomorphism of
$B$ onto $A$. Let
$$
\rho_{\pi}=(\pi\otimes\id)\circ\rho\circ\pi^{-1} , \quad
u_{\pi}=(\pi\otimes\id_{H^0}\otimes\id_{H^0})(u) .
$$
By Lemma \ref{lem:induced}, $(\rho_{\pi}, u_{\pi})$ is a twisted coaction of
$H^0$ on $A$.

\begin{lemma}\label{lem:iso}With the above notations, $(\rho_{\pi}, u_{\pi})$ is
strongly Morita equivalent to $(\rho, u)$.
\end{lemma}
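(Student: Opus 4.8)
The plan is to exhibit the equivalence bimodule implementing the strong Morita equivalence explicitly, using the isomorphism $\pi$ itself. Since $\pi$ is an isomorphism of $B$ onto $A$, regard $A$ as an $A-B$-equivalence bimodule $X$, where the left $A$-action is left multiplication, the right $B$-action is $x\cdot b=x\pi(b)$, and the inner products are ${}_A\la x, y\ra=xy^*$ and $\la x, y\ra_B=\pi^{-1}(x^* y)$ for $x, y\in X$, $b\in B$. On $X=A$ define the linear map $\lambda=\rho_\pi\colon X\to X\otimes H^0$. I claim that $(X,\lambda)$ witnesses the strong Morita equivalence of $(\rho_\pi, u_\pi)$ with $(\rho, u)$ in the sense of the definition of strong Morita equivalence for twisted coactions (cf. \cite{KT3:equivalence}).

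The verification proceeds condition by condition, and each one collapses to a statement about the $*$-homomorphism $\rho_\pi$ and the isomorphism $\pi$. First, bimodule compatibility: for $a\in A$, $b\in B$, $x\in X$, using that $\rho_\pi$ is a $*$-homomorphism and $\rho_\pi\circ\pi=(\pi\otimes\id)\circ\rho$, one gets $\lambda(a\cdot x\cdot b)=\rho_\pi(a)\,\lambda(x)\,(\pi\otimes\id)(\rho(b))=\rho_\pi(a)\cdot\lambda(x)\cdot\rho(b)$, where the last right action is the one on $X\otimes H^0$ induced by that of $B$ on $X$. Next, the inner-product conditions: ${}_A\la\lambda(x),\lambda(y)\ra=\rho_\pi(x)\rho_\pi(y)^*=\rho_\pi(xy^*)=\rho_\pi({}_A\la x,y\ra)$, and $\la\lambda(x),\lambda(y)\ra_B=(\pi^{-1}\otimes\id)(\rho_\pi(x^*y))=\rho(\pi^{-1}(x^*y))=\rho(\la x,y\ra_B)$. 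The counit condition $(\id_X\otimes\epsilon^0)\circ\lambda=\id_X$ holds because $(\id_A\otimes\epsilon^0)\circ\rho_\pi=\pi\circ(\id_B\otimes\epsilon^0)\circ\rho\circ\pi^{-1}=\id_A$ by the counit property of the weak coaction $\rho$. Finally, the twisting identity $(\lambda\otimes\id)\circ\lambda=\Ad(u_\pi)\circ(\id_X\otimes\Delta^0)\circ\lambda$ is, under $X=A$ and the identification of the right action of $u\in B\otimes H^0\otimes H^0$ on $X\otimes H^0\otimes H^0$ with left--right multiplication by $u_\pi=(\pi\otimes\id\otimes\id)(u)$, precisely Condition (1) for the twisted coaction $(\rho_\pi, u_\pi)$; this, together with the cocycle relation between $u_\pi$ and $u$, is exactly what Lemma \ref{lem:induced} provides.

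I expect the only real obstacle to be bookkeeping: matching the conventions for the $A\otimes H^0$- and $B\otimes H^0$-valued inner products and the induced module actions on $X\otimes H^0$ and on $X\otimes H^0\otimes H^0$ with the identification $X=A$, so that each displayed equality is literally an identity in the appropriate algebraic tensor product obtained by applying the homomorphism $\rho_\pi$ and the isomorphism $\pi^{-1}$. Since $X=A$ as a vector space and $\lambda=\rho_\pi$, no genuinely new computation beyond Lemma \ref{lem:induced} is needed; the content of the proof is simply checking that the defining conditions of strong Morita equivalence of twisted coactions are met on the nose with these specific choices of $X$ and $\lambda$.
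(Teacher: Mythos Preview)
Your proof is correct and follows essentially the same approach as the paper's: construct the equivalence bimodule from the isomorphism $\pi$ and take the bimodule coaction to be the given algebra coaction, then verify the axioms line by line. The only cosmetic difference is that the paper works with the $B$--$A$-bimodule $X_\pi=B$ (right $A$-action $x\cdot a=x\pi^{-1}(a)$, $\la x,y\ra_A=\pi(x^*y)$) and sets $\nu=\rho$, whereas you use the dual $A$--$B$-bimodule $X=A$ with $\lambda=\rho_\pi$; the verifications are mirror images of each other, and in both cases the twisting identity reduces, via the module identification of the right action of $u$ (resp.\ $u_\pi$) with multiplication by $u_\pi$ (resp.\ $u$), to Condition~(1) for $(\rho_\pi,u_\pi)$ (resp.\ $(\rho,u)$).
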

\begin{proof}Let $X_{\pi}$ be the $B-A$-equivalence bimodule induced by
$\pi$, that is, $X_{\pi}=B$ as vector spaces and the left $B$-action on $X_{\pi}$
and the left $B$-valued inner product are defined in the evident way. We define
the right $A$-action and the right $A$-valued inner product as follows: For any
$a\in A$, $x, y\in X_{\pi}$,
$$
x\cdot a=x\pi^{-1}(a) , \quad
\la x, y \ra_A =\pi(x^* y) .
$$
Let $\nu$ be the linear map from $X_{\pi}$ to $X_{\pi}\otimes H^0$
defined by $\nu(x)=\rho(x)$ for any $x\in X_{\pi}$. Then $\nu$ is a twisted coaction of $H^0$
on $X_{\pi}$ with respect to $(B, A, \rho, u, \rho_{\pi}, u_{\pi})$. Indeed,
for any $a\in A$, $b\in B$, $x, y\in X_{\pi}$,
\begin{align*}
& \nu(b\cdot x) =\rho(b)\nu(x)=\rho(b)\cdot \nu(x) , \\
& \nu(x\cdot a) =\nu(x) ((\pi^{-1}\otimes\id)\circ(\pi\otimes\id)\circ\rho\circ\pi^{-1})(a)
=\nu(x)\cdot\rho_{\pi}(a) , \\
& {}_{B\otimes H^0} \la \nu(x), \nu(y) \ra =\rho(xy^* )=\rho({}_B \la x, y \ra) , \\
& \la \nu (x), \nu(y) \ra_{A\otimes H^0 } =(\pi\otimes\id)(\rho(x^* y))=\rho_{\pi}(\la x, y \ra_A ), \\
& (\id\otimes\epsilon^0 )(\nu(x))= (\id\otimes\epsilon^0 )(\rho(x))=x , \\
& ((\nu\otimes\id)\circ\nu)(x)=u(\id\otimes\Delta^0 )(\rho(x))u^*
=u\cdot (\id\otimes\Delta^0 )(\rho(x))\cdot u_{\pi}^* .
\end{align*}
Thus $\nu$ is a twisted coaction of $H^0$ on $X_{\pi}$ with respect to
$(B, A, \rho, u, \rho_{\pi}, u_{\pi})$. Therefore we obtain the conclusion.
\end{proof}

\begin{remark}\label{remark:review}Let $(\rho, u)$ be a twisted coaction of $H^0$ on $A$.
By Lemma \ref{lem:iso}, \cite [Lemma 4.10]{Kodaka:equivariance} and \cite [Theorem 3.3]{KT2:coaction},
we can see that $(\rho, u)$ is strongly Morita equivalent to $\widehat{\widehat{\rho}}$, the second
dual coaction of $(\rho, u)$. This is obtained in \cite [the proof of Corollary 4.8]{KT3:equivalence}.
\end{remark}

Let $A$ and $H$, $H^0$ be as before. Let $(\rho, u)$ and $(\sigma, v)$
be twisted coactions of $H^0$ on $A$. Let $C=A\rtimes_{\rho, u}H$ and $D=A\rtimes_{\sigma, v}H$.
We also regard $A$ as an $A-A$-equivalence bimodule in the usual way. We suppose that the unital inclusions
$A\subset C$ and $A\subset D$ are strongly Morita equivalent with respect to a
$C-D$-equivalence bimodule $Y$ and its closed subspace $A$, that is, we assume that
the $A-A$-equivalence bimodule $A$ is included in $Y$ as a closed subspace.
Furthermore, we suppose that $A'\cap C=\BC1$. Then by \cite [Lemma 10.3]{KT4:morita},
$A' \cap D=\BC1$. Let $E_1^{\rho, u}$ and $E_1^{\sigma, v}$ be the canonical conditional
expectations from $C$ and $D$ onto $A$ defined by
$$
E_1^{\rho, u}(a\rtimes_{\rho, u}h)=\tau(h)a , \quad
E_1^{\sigma, v}(a\rtimes_{\sigma, v}h)=\tau(h)a
$$
for any $a\in A$, $h\in H$. By \cite [Theorem 2.7]{KT4:morita},
there are a conditional expectation $F^A$ of Watatani index-finite type from
$D$ onto $A$ and a conditional expectation $G^A$ from $Y$ onto $A$
with respect to $E_1^{\rho, u}$ and $F^A$. Since $A' \cap D=\BC1$, by
Watatani \cite [Proposition 4.1]{Watatani:index}, $F^A =E_1^{\sigma, v}$. Hence $G^A$ is
a conditional expectation from $Y$ onto $A$ with respect to $E_1^{\rho, u}$ and $E_1^{\sigma, v}$.
By the discussions in \cite [Section 2]{KT4:morita} and the proof of
Rieffel \cite [Proposition 2.1]{Rieffel:rotation}, there is an isomorphism $\Psi$ of $D$ onto $C$
defined by
$$
\Psi(d)={}_C \la 1_A \cdot d \, , \, 1_A \ra
$$
for any $d\in D$, where $A$ is a closed subspace of $Y$ and the unit element in $A$
is regarded as an element in $Y$. Then for any $a\in A$
$$
\Psi(a)={}_A \la 1_A \cdot a \, , \, 1_A \ra={}_A \la a , \, 1_A \ra =a .
$$
Also, for any $d\in D$
\begin{align*}
(E_1^{\rho, u}\circ\Psi)(d) & =E_1^{\rho, u}({}_C \la 1_A \cdot d \, , \, 1_A \ra )
={}_A \la G^A (1_A \cdot d) \, , \, 1_A \ra 
={}_A \la 1_A \cdot E_1^{\sigma, v}(d) \, , \, 1_A \ra \\
& =E_1^{\sigma, v}(d) .
\end{align*}
Thus, we obtain the following lemma:

\begin{lemma}\label{lem:equation}With the above notations, we suppose that
$A' \cap C=\BC1$. Then $\Psi$ is an isomorphism of $D$ onto $C$ satisfying that
$$
\Psi|_A =\id_A, \quad E_1^{\rho, u}\circ\Psi=E_1^{\sigma, v}
$$
\end{lemma}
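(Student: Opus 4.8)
The plan is to assemble the facts established in the discussion immediately preceding the lemma, since the map $\Psi$ and the auxiliary conditional expectations $F^A$ and $G^A$ have already been produced there; the lemma merely records their properties.

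First I would note that the hypothesis $A'\cap C=\BC1$ is used twice. By \cite[Lemma 10.3]{KT4:morita} it yields $A'\cap D=\BC1$, and then by Watatani's \cite[Proposition 4.1]{Watatani:index} the conditional expectation $F^A$ of Watatani index-finite type from $D$ onto $A$ furnished by \cite[Theorem 2.7]{KT4:morita} is forced to equal $E_1^{\sigma, v}$. Hence the conditional expectation $G^A$ from $Y$ onto $A$ is taken with respect to $E_1^{\rho, u}$ and $E_1^{\sigma, v}$. Following Rieffel's rotation argument \cite[Proposition 2.1]{Rieffel:rotation} together with the discussion in \cite[Section 2]{KT4:morita}, the formula $\Psi(d)={}_C \la 1_A \cdot d \, , \, 1_A \ra$ defines a $*$-isomorphism of $D$ onto $C$, where $1_A$ is regarded as an element of the closed subspace $A\subset Y$.

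Next I would check $\Psi|_A=\id_A$: for $a\in A$ one has $1_A \cdot a\in A$, and on $A\times A$ the $C$-valued inner product on $Y$ restricts to the $A$-valued inner product on $A$, so $\Psi(a)={}_A \la 1_A \cdot a \, , \, 1_A \ra={}_A \la a \, , \, 1_A \ra=a$. Then I would check $E_1^{\rho, u}\circ\Psi=E_1^{\sigma, v}$: for $d\in D$, using that $G^A$ is a conditional expectation with respect to $E_1^{\rho, u}$ and $E_1^{\sigma, v}$ (so that $E_1^{\rho, u}({}_C \la y, z \ra)={}_A \la G^A (y), z \ra$ for $z\in A$) and the module identity $G^A (1_A \cdot d)=1_A \cdot E_1^{\sigma, v}(d)$, we obtain
$$
(E_1^{\rho, u}\circ\Psi)(d)=E_1^{\rho, u}({}_C \la 1_A \cdot d \, , \, 1_A \ra)={}_A \la 1_A \cdot E_1^{\sigma, v}(d) \, , \, 1_A \ra=E_1^{\sigma, v}(d).
$$

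The substantive point is showing that $\Psi$ is a $*$-isomorphism rather than merely a linear or module map; this is exactly where Rieffel's rotation technique is invoked, and where the condition $A'\cap C=\BC1$ is indispensable, since it is what pins the two auxiliary conditional expectations down to the canonical ones and makes the rotation argument produce an algebra isomorphism compatible with the canonical conditional expectations. Once $\Psi$ is known to be an isomorphism, the two displayed identities are routine computations with the defining property of $G^A$, as indicated above.
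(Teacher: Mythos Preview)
Your proposal is correct and follows exactly the paper's own argument, which is laid out in the text immediately preceding the lemma: the uniqueness of conditional expectations (Watatani) forces $F^A=E_1^{\sigma,v}$, Rieffel's rotation argument gives that $\Psi$ is an isomorphism, and the two displayed identities follow from the defining properties of $G^A$ just as you compute. The only minor imprecision is in your closing commentary: the hypothesis $A'\cap C=\BC1$ is not needed for Rieffel's argument to produce the isomorphism $\Psi$ itself, only to identify $F^A$ with the canonical $E_1^{\sigma,v}$ and hence obtain compatibility with the canonical conditional expectations.
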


Let $\widehat{\rho}$ and $\widehat{\sigma}$ be the dual coactions of $H$ on $C$ and $D$
induced by the twisted coactions $(\rho, u)$ and $(\sigma, v)$, respectively and
let $C_1 =C\rtimes_{\widehat{\rho}}H^0$ and $D_1 =D\rtimes_{\widehat{\sigma}}H^0$ the
crossed products of $C$ and $D$ by the actions of $H^0$ on $C$ and $D$ induced by
$\widehat{\rho}$ and $\widehat{\sigma}$, respectively. Similarly, we define $\widehat{\widehat{\rho}}$
and $\widehat{\widehat{\sigma}}$, $C_2 =C_1\rtimes_{\widehat{\widehat{\rho}}}H$ and
$D_2 =D_1 \rtimes_{\widehat{\widehat{\sigma}}}H$, respectively. Let $E_2^{\rho, u}$ and $E_2^{\sigma, v}$
be the canonical conditional expectations from $C_1$ and $D_1$ onto $C$ and $D$ defined in the
same way as above. Also, let $E_3^{\rho, u}$ and $E_3^{\sigma, v}$ be the canonical conditional
expectations from $C_2$ and $D_2$ onto $C_1$ and $D_1$ defined in the same way as above,
respectively. Using $E_1^{\rho, u}$ and $E_1^{\sigma, v}$, we regard $C$ and $D$ as
right Hilbert $A$-modules, respectively. Let $\BB_A (C)$ and $\BB_A (D)$ be the $C^*$-algebras
of all right $A$-module maps on $C$ and $D$, respectively. We note that any right module map
in $\BB_A (C)$ or $\BB_A (D)$ is adjointable by Kajiwara and Watatani
\cite [Lemma 1.10]{KW1:bimodule} since $C$ and $D$ are of finite index in the sense of
\cite {KW1:bimodule}. For any $x, y\in C$, let $\theta_{x, y}^C$ be the rank-one $A$-module
map on $C$ defined by $\theta_{x, y}^C (z)=x\cdot \la y, z \ra_A$ for any $z\in C$.
Similarly, we define $\theta_{x,y}^D $ the rank-one $A$-module map on $D$ for any $x, y\in D$.
Then $\BB_A (C)$ and $\BB_A (D)$ are the linear spans of the above rank-one $A$-module maps
on $C$ and $D$, respectively.
Furthermore, we define $\BB_C (C_1 )$, $\BB_D (D_1 )$ and $\theta_{x, y}^{C_1}$, $\theta_{x',y'}^{D_1}$
for $x, y\in C_1$, $x', y' \in D_1$.

\begin{lemma}\label{lem:equation2}With the above notations and assumptions,
there is an isomorphism $\widehat{\Psi}$ of $D_1$ onto $C_1$ satisfying that
$$
\widehat{\Psi}|_D =\Psi, \quad \Psi\circ E_2^{\sigma, v}=E_2^{\rho, u}\circ\widehat{\Psi}, \quad
\widehat{\Psi}(1\rtimes_{\widehat{\sigma}}\tau)=1\rtimes_{\widehat{\rho}}\tau .
$$
\end{lemma}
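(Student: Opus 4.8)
The plan is to construct $\widehat{\Psi}$ by extending $\Psi$ along the crossed-product construction, using the fact established in Lemma \ref{lem:equation} that $\Psi|_A=\id_A$ and $\Psi\circ E_1^{\sigma,v}=E_1^{\rho,u}$ (equivalently $E_1^{\rho,u}\circ\Psi=E_1^{\sigma,v}$). Recall that $C_1=C\rtimes_{\widehat\rho}H^0$ is generated by $C$ together with a copy of $H^0$ implementing the dual coaction, and similarly for $D_1$; concretely $C_1=\Span\{c\rtimes_{\widehat\rho}\psi\mid c\in C,\ \psi\in H^0\}$. The first step is to propose the formula $\widehat{\Psi}(d\rtimes_{\widehat\sigma}\psi)=\Psi(d)\rtimes_{\widehat\rho}\psi$ for $d\in D$, $\psi\in H^0$, and to verify this is well defined and multiplicative. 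The multiplication in the crossed product involves the action of $H^0$ on $D$ induced by $\widehat\sigma$; since $\widehat\sigma(d)$ is built from $(\sigma,v)$ in a canonical way and $\Psi$ intertwines the canonical conditional expectations, one checks that $\Psi$ intertwines the dual coactions, i.e. $(\Psi\otimes\id_H)\circ\widehat\sigma=\widehat\rho\circ\Psi$. This is the key compatibility: once it holds, $\widehat\Psi$ is an isomorphism of $*$-algebras, and since both sides are finite-dimensional-fiber crossed products of $C^*$-algebras it is automatically a $C^*$-isomorphism.

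To prove the intertwining relation $(\Psi\otimes\id_H)\circ\widehat\sigma=\widehat\rho\circ\Psi$, I would use the characterization of $\widehat\sigma$ and $\widehat\rho$ through the canonical conditional expectations $E_1^{\sigma,v}$, $E_1^{\rho,u}$ and the quasi-bases $\{(W_I^{\sigma*},W_I^{\sigma})\}_{I\in\Lambda}$, $\{(W_I^{\rho*},W_I^{\rho})\}_{I\in\Lambda}$ for $E_1^{\sigma,v}$, $E_1^{\rho,u}$ respectively (cf.\ \cite[Proposition 3.18]{KT1:inclusion}). Since $\Psi|_A=\id_A$ and $\Psi$ intertwines the conditional expectations, it must send the quasi-basis elements to the quasi-basis elements up to the appropriate identification; more precisely, the dual coaction $\widehat\rho$ on $C$ is the unique coaction of $H$ whose fixed-point algebra is $A$ and which is compatible with $E_1^{\rho,u}$ in the manner of a Watatani basis decomposition, so the relation $E_1^{\rho,u}\circ\Psi=E_1^{\sigma,v}$ together with $\Psi|_A=\id_A$ forces $\Psi$ to be equivariant. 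Having established equivariance, $\widehat\Psi$ as defined above is manifestly an isomorphism, it restricts to $\Psi$ on $D$ (take $\psi=1^0$), and it maps $1\rtimes_{\widehat\sigma}\tau$ to $1\rtimes_{\widehat\rho}\tau$ since $\Psi(1)=1$ and the $H^0$-variable is untouched.

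Finally, the relation $\Psi\circ E_2^{\sigma,v}=E_2^{\rho,u}\circ\widehat\Psi$ follows directly from the explicit formulas $E_2^{\rho,u}(c\rtimes_{\widehat\rho}\psi)=c\,\psi(e)$ and $E_2^{\sigma,v}(d\rtimes_{\widehat\sigma}\psi)=d\,\psi(e)$: applying $\widehat\Psi$ first and then $E_2^{\rho,u}$ gives $\Psi(d)\psi(e)$, while applying $E_2^{\sigma,v}$ first and then $\Psi$ gives $\Psi(d\,\psi(e))=\Psi(d)\psi(e)$, since $\psi(e)\in\BC$. I expect the main obstacle to be the equivariance step $(\Psi\otimes\id_H)\circ\widehat\sigma=\widehat\rho\circ\Psi$: one has to pin down precisely why an isomorphism of the inclusions $A\subset D$ and $A\subset C$ that fixes $A$ and intertwines the canonical conditional expectations must be compatible with the dual coactions. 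The cleanest route is probably to observe that $\widehat\rho$ is recovered from the Jones projection $1\rtimes_{\widehat\rho}\tau$ and the $C$-bimodule structure via the formula in Lemma \ref{lem:pre}, so that the uniqueness of the Jones projection for $E_1^{\rho,u}$ inside $C_1$ (under $A'\cap C=\BC1$, which gives irreducibility-type rigidity) transports the whole dual-coaction data along $\Psi$; alternatively one invokes \cite[Proposition 2.1]{Rieffel:rotation} and the already-cited results of \cite{KT4:morita} on how the rotation isomorphism interacts with the basic construction tower $C\subset C_1\subset C_2$.
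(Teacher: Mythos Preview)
Your proposed formula $\widehat{\Psi}(d\rtimes_{\widehat\sigma}\psi)=\Psi(d)\rtimes_{\widehat\rho}\psi$ hinges on the equivariance $(\Psi\otimes\id_H)\circ\widehat\sigma=\widehat\rho\circ\Psi$, and this step is not merely an obstacle but is false in general. The dual coaction $\widehat\rho$ encodes more than the inclusion $A\subset C$ together with $E_1^{\rho,u}$: it remembers the particular crossed-product decomposition, in effect the distinguished copy of $H$ inside $C$. An isomorphism $\Psi$ fixing $A$ and intertwining the canonical expectations need only respect this structure up to a Hopf algebra automorphism. Concretely, if your formula held, then restricting $\widehat\Psi$ to $A'\cap D_1=1\rtimes_{\widehat\sigma}H^0$ would give $\widehat\Psi(1\rtimes_{\widehat\sigma}\psi)=1\rtimes_{\widehat\rho}\psi$ for every $\psi\in H^0$, forcing the automorphism $\lambda^0$ of Lemma~\ref{lem:Hopfiso} to be the identity. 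That would collapse the main theorem to the assertion that $(\rho,u)$ and $(\sigma,v)$ are themselves strongly Morita equivalent, which already fails for two group actions related by a nontrivial automorphism of the group. Your uniqueness heuristic misfires because, while the Jones projection $1\rtimes_{\widehat\rho}\tau$ is intrinsic to the pair $(A\subset C,\,E_1^{\rho,u})$, the remaining elements $1\rtimes_{\widehat\rho}\psi$ are not; the formula of Lemma~\ref{lem:pre} still involves $\psi\cdot_{\widehat\rho}(-)$, which depends on the specific dual coaction.

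The paper avoids this issue entirely by passing to the basic-construction picture $C_1\cong\BB_A(C)$, $D_1\cong\BB_A(D)$, which depends only on the inclusions and their conditional expectations. Since $E_1^{\rho,u}\circ\Psi=E_1^{\sigma,v}$ and $\Psi|_A=\id_A$, the map $\Psi$ is an isomorphism of Hilbert $A$--$A$-bimodules from $D$ to $C$, and one simply sets $\widehat\Psi(T)=\Psi\circ T\circ\Psi^{-1}$ for $T\in\BB_A(D)$; no equivariance is required. The three properties then follow by direct computation: $\widehat\Psi|_D=\Psi$ because left-multiplication operators conjugate correctly; $\widehat\Psi(1\rtimes_{\widehat\sigma}\tau)=1\rtimes_{\widehat\rho}\tau$ because both Jones projections are identified with the rank-one map $\theta_{1,1}$; and $\Psi\circ E_2^{\sigma,v}=E_2^{\rho,u}\circ\widehat\Psi$ is checked on generators of the form $(x\rtimes_{\widehat\sigma}1^0)(1\rtimes_{\widehat\sigma}\tau)(y\rtimes_{\widehat\sigma}1^0)$.
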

\begin{proof}We note that $C_1 \cong\BB_A (C)$ and $D_1 \cong \BB_A (D)$, respectively.
Then $\Psi$ can be regarded as a Hilbert $A-A$-bimodule isomorphism of $D$
onto $C$ since $E_1^{\rho, u}\circ\Psi=E_1^{\sigma, v}$. Let $\widehat{\Psi}$ be the map
from $\BB_A (D)$ to $\BB_A (C)$ defined by $\widehat{\Psi}(T)=\Psi\circ T\circ\Psi^{-1}$ for any
$T\in \BB_A (D)$. By routine computations, $\widehat{\Psi}$ is an isomorphism of $D_1$
onto $C_1$. For any $x\in D$, let $T_x$ be the element in $\BB_A (D)$ defined by $T_x (y)=xy$
for any $y\in D$. Then for any $x\in D$ and $z\in C$,
$$
\widehat{\Psi}(T_x )(z) =(\Psi\circ T_x \circ\Psi^{-1})(z) =
\Psi(x\Psi^{-1}(z))=\Psi(x)z 
=T_{\Psi(x)}(z) .
$$
Hence $\widehat{\Psi}|_D =\Psi$. Also, since $1\rtimes_{\widehat{\rho}}\tau$
and $1\rtimes_{\widehat{\sigma}}\tau$ are identified with $\theta_{1,1}^C$ and $\theta_{1,1}^D$,
respectively. For any $z\in C$,
\begin{align*}
\widehat{\Psi}(1\rtimes_{\widehat{\sigma}}\tau)(z) & =(\Psi\circ\theta_{1,1}^D \circ\Psi^{-1})(z)
=\Psi (1\cdot \la 1, \, \Psi^{-1}(z) \ra_A )=\Psi(E_1^{\sigma, v}(\Psi^{-1}(z))) \\
& =E_1^{\sigma, v}(\Psi^{-1}(z))=E_1^{\rho, u}(z)=\theta_{1,1}^C (z)=(1\rtimes_{\widehat{\rho}}\tau)(z) .
\end{align*}
Thus, $\widehat{\Psi}(1\rtimes_{\widehat{\sigma}}\tau)=1\rtimes_{\widehat{\rho}}\tau$.
Furthermore, for any $x, y\in D$,
\begin{align*}
(E_2^{\rho, u}\circ\widehat{\Psi})((x\rtimes_{\widehat{\sigma}}1^0 )
(1\rtimes_{\widehat{\sigma}}\tau)(y\rtimes_{\widehat{\sigma}}1^0 ))
& =E_2^{\rho, u}((\Psi(x)\rtimes_{\widehat{\rho}}1^0 )
(1\rtimes_{\widehat{\rho}}\tau)(\Psi(y)\rtimes_{\widehat{\rho}}1^0 )) \\
& =\Psi(x)\tau(e)\Psi(y)=\Psi(xy)\tau(e) .
\end{align*}
On the other hand
$$
(\Psi\circ E_2^{\sigma, v})((x\rtimes_{\widehat{\sigma}}1^0 )(1\rtimes_{\widehat{\sigma}}\tau)
(y\rtimes_{\widehat{\sigma}}1^0 ))=\Psi(x\tau(e)y)
=\Psi(xy)\tau(e) .
$$
Hence $E_2^{\rho, u}\circ\widehat{\Psi}=\Psi\circ E_2^{\sigma, v}$.
Therefore, we obtain the conclusion.
\end{proof}

\begin{lemma}\label{lem:equation3}With the above notations and assumptions,
there is an isomorphism $\widehat{\widehat{\Psi}}$ of $D_2$ onto $C_2$ satisfying that
$$
\widehat{\widehat{\Psi}}|_{D_1} =\widehat{\Psi}, \quad
\widehat{\widehat{\Psi}}(1_{D_1}\rtimes_{\widehat{\widehat{\sigma}}}e)
=1_{C_1}\rtimes_{\widehat{\widehat{\rho}}}e , \quad
E_3^{\rho, u}\circ\widehat{\widehat{\Psi}}=\widehat{\Psi}\circ E_3^{\sigma, v} .
$$
\end{lemma}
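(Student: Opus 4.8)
The plan is to carry out the construction of Lemma \ref{lem:equation2} one level up, with $\widehat{\Psi}$ in place of $\Psi$, the towers $D\subset D_1\subset D_2$ and $C\subset C_1\subset C_2$ in place of $A\subset D\subset D_1$ and $A\subset C\subset C_1$, and $E_2^{\sigma, v}, E_3^{\sigma, v}$ (resp. $E_2^{\rho, u}, E_3^{\rho, u}$) in place of $E_1^{\sigma, v}, E_2^{\sigma, v}$ (resp. $E_1^{\rho, u}, E_2^{\rho, u}$). First I would note the identifications $C_2\cong\BB_C (C_1)$ and $D_2\cong\BB_D (D_1)$, where $C_1$ and $D_1$ are regarded as right Hilbert $C$- and $D$-modules by means of $E_2^{\rho, u}$ and $E_2^{\sigma, v}$, these realizing the basic constructions for $E_2^{\rho, u}: C_1\to C$ and $E_2^{\sigma, v}: D_1\to D$; under these identifications $1_{C_1}\rtimes_{\widehat{\widehat{\rho}}}e$ and $1_{D_1}\rtimes_{\widehat{\widehat{\sigma}}}e$ are the Jones projections, identified with the rank-one module maps $\theta_{1, 1}^{C_1}$ and $\theta_{1, 1}^{D_1}$, exactly as $1\rtimes_{\widehat{\rho}}\tau$ and $1\rtimes_{\widehat{\sigma}}\tau$ were identified with $\theta_{1, 1}^C$ and $\theta_{1, 1}^D$ in Lemma \ref{lem:equation2}.

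Next, using Lemma \ref{lem:equation2}, I would observe that $\widehat{\Psi}$ is a Hilbert module isomorphism of the right Hilbert $D$-module $D_1$ onto the right Hilbert $C$-module $C_1$ covering the isomorphism $\Psi$ of $D$ onto $C$: since $\widehat{\Psi}|_D =\Psi$ and $\widehat{\Psi}$ is a $*$-homomorphism, $\widehat{\Psi}(x\cdot d)=\widehat{\Psi}(x)\cdot\Psi(d)$ for any $x\in D_1$, $d\in D$, and by $\Psi\circ E_2^{\sigma, v}=E_2^{\rho, u}\circ\widehat{\Psi}$ we obtain $\Psi(\la x, y\ra_D )=E_2^{\rho, u}(\widehat{\Psi}(x)^* \widehat{\Psi}(y))=\la\widehat{\Psi}(x), \widehat{\Psi}(y)\ra_C$ for any $x, y\in D_1$. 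Hence the map $\widehat{\widehat{\Psi}}$ from $\BB_D (D_1)$ to $\BB_C (C_1)$ defined by $\widehat{\widehat{\Psi}}(T)=\widehat{\Psi}\circ T\circ\widehat{\Psi}^{-1}$ is well-defined and a $*$-isomorphism, that is, an isomorphism of $D_2$ onto $C_2$; this is verified by the same routine computations as for $\widehat{\Psi}$ in Lemma \ref{lem:equation2}.

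Finally I would verify the three required identities. For any $x\in D_1$, letting $T_x\in\BB_D (D_1)$ be the map of left multiplication by $x$, we have $\widehat{\widehat{\Psi}}(T_x )(w)=\widehat{\Psi}(x\cdot\widehat{\Psi}^{-1}(w))=\widehat{\Psi}(x)\cdot w=T_{\widehat{\Psi}(x)}(w)$ for any $w\in C_1$, so $\widehat{\widehat{\Psi}}|_{D_1}=\widehat{\Psi}$. For the Jones projection, $\widehat{\widehat{\Psi}}(\theta_{1, 1}^{D_1})(w)=\widehat{\Psi}(1_{D_1}\cdot\la 1_{D_1}, \widehat{\Psi}^{-1}(w)\ra_D )=\widehat{\Psi}(E_2^{\sigma, v}(\widehat{\Psi}^{-1}(w)))=E_2^{\rho, u}(w)=\theta_{1, 1}^{C_1}(w)$, whence $\widehat{\widehat{\Psi}}(1_{D_1}\rtimes_{\widehat{\widehat{\sigma}}}e)=1_{C_1}\rtimes_{\widehat{\widehat{\rho}}}e$. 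For the last identity, since $D_2$ is the basic construction for $E_2^{\sigma, v}$, it is the closed linear span of the elements $(x\rtimes_{\widehat{\widehat{\sigma}}}1^0 )(1_{D_1}\rtimes_{\widehat{\widehat{\sigma}}}e)(y\rtimes_{\widehat{\widehat{\sigma}}}1^0 )$ with $x, y\in D_1$, and by the first two identities $\widehat{\widehat{\Psi}}$ sends such an element to $(\widehat{\Psi}(x)\rtimes_{\widehat{\widehat{\rho}}}1^0 )(1_{C_1}\rtimes_{\widehat{\widehat{\rho}}}e)(\widehat{\Psi}(y)\rtimes_{\widehat{\widehat{\rho}}}1^0 )$; applying $E_3^{\rho, u}$ and $E_3^{\sigma, v}$ to these expressions and comparing, which is precisely the computation at the end of the proof of Lemma \ref{lem:equation2} one level up, gives $E_3^{\rho, u}\circ\widehat{\widehat{\Psi}}=\widehat{\Psi}\circ E_3^{\sigma, v}$. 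The step requiring the most care is the first one, namely that $C_2\cong\BB_C (C_1)$ and $D_2\cong\BB_D (D_1)$ realize the basic constructions with the stated Jones projections; once that is in hand, the remainder is formally identical to Lemma \ref{lem:equation2}.
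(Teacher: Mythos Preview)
Your proposal is correct and follows exactly the approach the paper intends: the paper's own proof simply says ``We can prove this lemma in the same way as in the proof of Lemma~\ref{lem:equation2},'' and you have carried out that argument one level up in full detail. One small notational slip: since $D_2 =D_1 \rtimes_{\widehat{\widehat{\sigma}}}H$ and $C_2 =C_1 \rtimes_{\widehat{\widehat{\rho}}}H$, the elements should be written $x\rtimes_{\widehat{\widehat{\sigma}}}1$ and $\widehat{\Psi}(x)\rtimes_{\widehat{\widehat{\rho}}}1$ (with $1\in H$) rather than with $1^0$.
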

\begin{proof}We can prove this lemma in the same way as in the proof of Lemma \ref{lem:equation2}
\end{proof}

By Lemmas \ref{lem:equation2} and \ref{lem:equation3}, $\widehat{\Psi}|_{A' \cap D_1}$
and $\widehat{\widehat{\Psi}}|_{D' \cap D_2}$ are isomorphisms of $A' \cap D_1$ and $D' \cap D_2$ onto
$A' \cap C_1$ and $C' \cap C_2$, respectively.

\begin{lemma}\label{lem:Ciso}With the above notations and assumptions,
$A' \cap D_1 \cong H^0$ as $C^*$-algebras.
\end{lemma}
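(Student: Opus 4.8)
The plan is to identify $A'\cap D_1$ with the canonical copy of $H^0$ inside $D_1=D\rtimes_{\widehat{\sigma}}H^0$, that is, with $\{1\rtimes_{\sigma,v}1\rtimes_{\widehat{\sigma}}\psi : \psi\in H^0\}$. First I would recall that under the present assumptions $A'\cap D=\BC1$ (this was observed above from $A'\cap C=\BC1$ via \cite[Lemma 10.3]{KT4:morita}). The decisive point is that the action $\cdot_{\widehat{\sigma}}$ of $H^0$ on $D=A\rtimes_{\sigma,v}H$ dual to the coaction $\widehat{\sigma}$ of $H$ restricts trivially to $A$: since $\widehat{\sigma}(a)=a\otimes1_H$ for $a\in A$ (because $\Delta(1_H)=1_H\otimes1_H$), one gets $\psi\cdot_{\widehat{\sigma}}a=\epsilon^0(\psi)a$ for all $\psi\in H^0$, $a\in A$. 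Using the multiplication rule of the crossed product (exactly as in the proof of Lemma \ref{lem:pre}), this yields, for all $a\in A$, $d\in D$, $\psi\in H^0$,
$$
(a\rtimes_{\widehat{\sigma}}1^0)(d\rtimes_{\widehat{\sigma}}\psi)=ad\rtimes_{\widehat{\sigma}}\psi,
\qquad
(d\rtimes_{\widehat{\sigma}}\psi)(a\rtimes_{\widehat{\sigma}}1^0)=da\rtimes_{\widehat{\sigma}}\psi .
$$

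Next I would fix a linear basis $\{\psi_j\}$ of $H^0$ and write an arbitrary $z\in D_1$ uniquely as $z=\sum_j d_j\rtimes_{\widehat{\sigma}}\psi_j$ with $d_j\in D$, using that $D_1\cong D\otimes H^0$ as a vector space. By the two identities above, for $a\in A$ we have $az=\sum_j(ad_j)\rtimes_{\widehat{\sigma}}\psi_j$ and $za=\sum_j(d_ja)\rtimes_{\widehat{\sigma}}\psi_j$, hence $z\in A'$ if and only if $ad_j=d_ja$ for all $j$ and all $a\in A$, i.e. if and only if each $d_j\in A'\cap D=\BC1$. Therefore $A'\cap D_1=\{1\rtimes_{\sigma,v}1\rtimes_{\widehat{\sigma}}\psi : \psi\in H^0\}$. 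The map $\psi\mapsto1\rtimes_{\sigma,v}1\rtimes_{\widehat{\sigma}}\psi$ is linear, unital, injective (again because $D_1\cong D\otimes H^0$), and multiplicative (since $\psi\cdot_{\widehat{\sigma}}1_D=\epsilon^0(\psi)1_D$ forces $(1\rtimes_{\widehat{\sigma}}\psi)(1\rtimes_{\widehat{\sigma}}\chi)=1\rtimes_{\widehat{\sigma}}\psi\chi$), and its image is precisely the $*$-subalgebra $A'\cap D_1$ of $D_1$; since a finite dimensional $C^*$-algebra is determined up to $*$-isomorphism by its underlying algebra, it follows that $A'\cap D_1\cong H^0$ as $C^*$-algebras.

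All the computations involved are elementary, and in particular this lemma uses none of the equivalence bimodule structure of the previous sections. The only point requiring a little care is the bookkeeping with the crossed-product conventions of \cite{KT1:inclusion}: that $z=\sum_j d_j\rtimes_{\widehat{\sigma}}\psi_j$ is a legitimate unique expansion, that the canonical copy of $H^0$ is realized as $\{1\rtimes_{\sigma,v}1\rtimes_{\widehat{\sigma}}\psi\}$, and that this copy is $*$-closed, so that the algebra isomorphism $\psi\mapsto1\rtimes_{\sigma,v}1\rtimes_{\widehat{\sigma}}\psi$ is in fact a $C^*$-isomorphism onto $A'\cap D_1$. I do not anticipate any serious obstacle.
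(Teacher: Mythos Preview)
Your proposal is correct and follows essentially the same route as the paper's proof: both arguments expand an arbitrary element of $D_1$ in a basis of $H^0$, use that $\psi\cdot_{\widehat{\sigma}}a=\epsilon^0(\psi)a$ for $a\in A$ to reduce commutation with $A$ to a componentwise condition in $D$, and invoke $A'\cap D=\BC1$ to conclude $A'\cap D_1=1\rtimes_{\widehat{\sigma}}H^0$. The only cosmetic difference is that you spell out a few bookkeeping details (the vector-space identification $D_1\cong D\otimes H^0$ and the $*$-preservation of $\psi\mapsto 1\rtimes_{\widehat{\sigma}}\psi$) that the paper leaves implicit.
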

\begin{proof}
Since $H^0 \cong 1\rtimes_{\widehat{\sigma}}H^0$ as $C^*$-algebras, it suffices to show that
$A' \cap D_1 =1\rtimes_{\widehat{\sigma}}H^0$, where we identify $A$ with
$A\rtimes_{\sigma, v}1\rtimes_{\widehat{\sigma}}1^0$. Let $x\in A' \cap D_1$. Then
we can write that $x=\sum_i x_i \rtimes_{\widehat{\sigma}}\phi_i$, where $x_i \in D$
and $\{\phi_i \}_i$ is a basis of $H^0$. For any $a\in A$,
\begin{align*}
(a\rtimes_{\sigma, v}1\rtimes_{\widehat{\sigma}}1^0 )x & =\sum_i (a\rtimes_{\sigma, v}1)x_i
\rtimes_{\widehat{\sigma}}\phi_i , \\
x(a\rtimes_{\sigma, v}1\rtimes_{\widehat{\sigma}}1^0 ) & =\sum_i (x_i \rtimes_{\widehat{\sigma}}\phi_i )
(a\rtimes_{\sigma, v}1\rtimes_{\widehat{\sigma}}1^0 ) \\
& =\sum_i x_i [\phi_{i (1)}\cdot_{\widehat{\sigma}}a\rtimes_{\sigma, v}1]\rtimes_{\widehat{\sigma}}\phi_{i(2)} \\
& =\sum_i x_i (a\rtimes_{\sigma, v}1)\rtimes_{\widehat{\sigma}}\phi_i .
\end{align*}
Since $(a\rtimes_{\sigma, v}1\rtimes_{\widehat{\sigma}}1^0 )x
=x(a\rtimes_{\sigma, v}1\rtimes_{\widehat{\sigma}}1^0 )$,
$(a\rtimes_{\sigma, v}1)x_i =x_i (a\rtimes_{\sigma, v}1)$ for any
$a\in A$ and $i$. Since $A' \cap D=\BC1$, $x_i \in \BC1$ for any $i$.
Therefore, $A' \cap D_1 \subset 1\rtimes_{\widehat{\sigma}}H^0$. The inverse inclusion is
clear. Hence we obtain the conclusion.
\end{proof}

By Lemma \ref{lem:Ciso}, $\widehat{\Psi}|_{A' \cap D_1}$ can be regarded as
a $C^*$-algebra automorphism of $H^0$. We denote it by $\lambda^0$.

\begin{lemma}\label{lem:Hopfiso}With the above notations, $\lambda^0$ is a
$C^*$-Hopf algebra automorphism of $H^0$.
\end{lemma}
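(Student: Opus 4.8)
The plan starts from the observation that $\lambda^0$ is already a $C^*$-algebra automorphism of $H^0$: by Lemma \ref{lem:Ciso} and its obvious analogue for $C$, the identifications $A'\cap D_1=1\rtimes_{\widehat{\sigma}}H^0$ and $A'\cap C_1=1\rtimes_{\widehat{\rho}}H^0$ turn $\widehat{\Psi}|_{A'\cap D_1}$ into such an automorphism. Thus only compatibility with the coalgebra structure remains, and I would first reduce this to the single identity
\[
(\lambda^0\otimes\lambda^0)\circ\Delta^0=\Delta^0\circ\lambda^0 .
\]
Granting it, $\epsilon^0\circ\lambda^0$ is a character of $H^0$ satisfying $((\epsilon^0\circ\lambda^0)\otimes\id)\circ\Delta^0=\id$; since $(\epsilon^0\otimes\id)\circ\Delta^0=\id$ and $\lambda^0$ is injective, this forces $\epsilon^0\circ\lambda^0=\epsilon^0$, and a unital $*$-isomorphism intertwining comultiplications and counits automatically intertwines the uniquely determined antipodes. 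Hence $\lambda^0$ is a $C^*$-Hopf algebra automorphism.

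Next I would locate $\Delta^0$ inside the crossed-product picture. The dual coaction $\widehat{\widehat{\sigma}}$ of $H^0$ on $D_1$ satisfies $\widehat{\widehat{\sigma}}(d\rtimes_{\widehat{\sigma}}\psi)=(d\rtimes_{\widehat{\sigma}}\psi_{(1)})\otimes\psi_{(2)}$; thus it carries $A'\cap D_1=1\rtimes_{\widehat{\sigma}}H^0$ into $(A'\cap D_1)\otimes H^0$, and, reading the first leg through $1\rtimes_{\widehat{\sigma}}\chi\leftrightarrow\chi$, this restriction is precisely $\Delta^0$; the same holds for $\widehat{\widehat{\rho}}$ on $C_1$ and $A'\cap C_1$. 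Therefore the desired identity follows once one establishes
\[
\widehat{\widehat{\rho}}\circ\widehat{\Psi}=(\widehat{\Psi}\otimes\lambda^0)\circ\widehat{\widehat{\sigma}}
\]
on $A'\cap D_1$: evaluating both sides on $1\rtimes_{\widehat{\sigma}}\psi$, using $\widehat{\Psi}(1\rtimes_{\widehat{\sigma}}\psi)=1\rtimes_{\widehat{\rho}}\lambda^0(\psi)$, and comparing tensor legs yields exactly $(\lambda^0\otimes\lambda^0)\circ\Delta^0=\Delta^0\circ\lambda^0$.

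The heart of the matter is this intertwining relation, and to prove it I would pass to the next level of the towers and invoke Lemma \ref{lem:equation3}. The dual coaction $\widehat{\widehat{\sigma}}$ is implemented by the crossed-product structure of $D_2=D_1\rtimes_{\widehat{\widehat{\sigma}}}H$ over $D_1$, in the sense that it is recoverable from the inclusion $D_1\subset D_2$, the canonical conditional expectation $E_3^{\sigma,v}$, the projection $1_{D_1}\rtimes_{\widehat{\widehat{\sigma}}}e$ and the copy $1_{D_1}\rtimes_{\widehat{\widehat{\sigma}}}H$ of $H$ inside $D_2$ (expanding over a quasi-basis for $E_3^{\sigma,v}$ built from comatrix units of $H$, in the spirit of Lemma \ref{lem:pre} and Remark \ref{remark:calculus}), and similarly for $\widehat{\widehat{\rho}}$ and $C_2$. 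Since $\widehat{\widehat{\Psi}}$ restricts to $\widehat{\Psi}$ on $D_1$, sends $1_{D_1}\rtimes_{\widehat{\widehat{\sigma}}}e$ to $1_{C_1}\rtimes_{\widehat{\widehat{\rho}}}e$ and intertwines $E_3^{\sigma,v}$ with $E_3^{\rho,u}$, it carries the relative commutant $D_1'\cap D_2=1_{D_1}\rtimes_{\widehat{\widehat{\sigma}}}H$ isomorphically onto $C_1'\cap C_2=1_{C_1}\rtimes_{\widehat{\widehat{\rho}}}H$ (an analogue of Lemma \ref{lem:Ciso}) and transports the implementation of $\widehat{\widehat{\sigma}}$ onto that of $\widehat{\widehat{\rho}}$. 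Tracking the two tensor legs through this transport gives the desired relation, the $H^0$-leg being governed by the action of $\widehat{\widehat{\Psi}}$ on $A'\cap C_1$, that is, by $\lambda^0$.

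The main obstacle is exactly this last piece of bookkeeping: one must reconcile the two roles of $H^0$ — as the relative commutant $A'\cap C_1$ identified with $1\rtimes_{\widehat{\rho}}H^0$, and as the abstract coefficient Hopf algebra in the leg of $\widehat{\widehat{\rho}}$ — and verify that $\widehat{\widehat{\Psi}}$ matches them consistently, so that the automorphism it induces on the copy $1_{D_1}\rtimes_{\widehat{\widehat{\sigma}}}H$ is forced to be the automorphism of $H$ dual to $\lambda^0$, with no spurious twist entering the $H^0$-leg. This is where the full set of relations supplied by Lemmas \ref{lem:equation2} and \ref{lem:equation3} — $\widehat{\Psi}|_D=\Psi$, $\widehat{\Psi}(1\rtimes_{\widehat{\sigma}}\tau)=1\rtimes_{\widehat{\rho}}\tau$, $E_2^{\rho,u}\circ\widehat{\Psi}=\Psi\circ E_2^{\sigma,v}$, together with their one-level-up counterparts — is genuinely used.
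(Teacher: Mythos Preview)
Your reduction to the single identity $(\lambda^0\otimes\lambda^0)\circ\Delta^0=\Delta^0\circ\lambda^0$ is correct, as is the reformulation of that identity as the intertwining relation $\widehat{\widehat{\rho}}\circ\widehat{\Psi}=(\widehat{\Psi}\otimes\lambda^0)\circ\widehat{\widehat{\sigma}}$ on $A'\cap D_1$. Note, however, that this intertwining is exactly the content of Lemma~\ref{lem:conjugate}, which in the paper is proved \emph{after} the present lemma and explicitly uses it; so an independent argument is genuinely required, as you recognise.

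The gap is in that independent argument. Your transport-of-structure sketch shows that $\widehat{\widehat{\Psi}}$ carries the data $(D_1\subset D_2,\ E_3^{\sigma,v},\ 1\rtimes_{\widehat{\widehat{\sigma}}}e)$ onto the corresponding data on the $C$-side, and hence induces a $C^*$-algebra automorphism of the copy of $H$ sitting in the relative commutant. What this gives is an intertwining of the form $\widehat{\widehat{\rho}}\circ\widehat{\Psi}=(\widehat{\Psi}\otimes\mu^0)\circ\widehat{\widehat{\sigma}}$ for \emph{some} automorphism $\mu^0$ of $H^0$ dual to that automorphism of $H$. The step you flag as ``the main obstacle'' --- that $\mu^0$ coincides with the previously defined $\lambda^0$ --- is never actually carried out; you only say that the relations from Lemmas~\ref{lem:equation2} and~\ref{lem:equation3} are ``genuinely used''. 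Without this identification your argument proves that $\mu^0$ is a Hopf automorphism, not that $\lambda^0$ is one.

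The paper closes precisely this gap, by a different and more direct route. It proves the explicit formula
\[
\phi(h)=N^2 (E_2^{\rho,u}\circ E_3^{\rho,u})\bigl((1\rtimes_{\widehat{\rho}}\phi\rtimes_{\widehat{\widehat{\rho}}}1)
(1\rtimes_{\widehat{\rho}}1^0\rtimes_{\widehat{\widehat{\rho}}}e)
(1\rtimes_{\widehat{\rho}}\tau\rtimes_{\widehat{\widehat{\rho}}}1)
(1\rtimes_{\widehat{\rho}}1^0\rtimes_{\widehat{\widehat{\rho}}}h)\bigr),
\]
expressing the duality pairing entirely through the objects $E_2$, $E_3$, $1\rtimes e$ and $1\rtimes\tau$ that $\widehat{\widehat{\Psi}}$ is already known to preserve (Lemmas~\ref{lem:equation2} and~\ref{lem:equation3}). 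Applying $\widehat{\widehat{\Psi}}$ then yields $\lambda^0(\phi)(h)=\phi\bigl(\widehat{\widehat{\Psi}}^{-1}(h)\bigr)$ immediately, which is exactly the statement that $\lambda^0$ is dual to the $C^*$-algebra automorphism $\widehat{\widehat{\Psi}}^{-1}|_H$ of $H$. From this single identity the three Hopf conditions drop out in one line each, since $\widehat{\widehat{\Psi}}^{-1}$ is a unital $*$-homomorphism. In other words, the paper does not go through the intertwining relation at all at this stage; it produces the concrete pairing formula that is the missing ingredient in your sketch.
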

\begin{proof}It suffices to show that for any $\phi\in H^0$,
$$
S^0 (\lambda^0 (\phi))=\lambda^0 (S^0 (\phi)), \quad
\Delta^0 (\lambda^0 (\phi))=(\lambda^0 \otimes\lambda^0 )(\Delta^0 (\phi)), \quad
\epsilon^0 (\lambda^0 (\phi))=\epsilon^0 (\phi) .
$$
We note that for any $h\in H$, $\phi\in H^0$,
$$
\phi(h)=N^2 (E_2^{\rho, u}\circ E_3^{\rho, u})((1\rtimes_{\widehat{\rho}}\phi\rtimes_{\widehat{\widehat{\rho}}}1)
(1\rtimes_{\widehat{\rho}}1^0 \rtimes_{\widehat{\widehat{\rho}}}e)
(1\rtimes_{\widehat{\rho}}\tau\rtimes_{\widehat{\widehat{\rho}}}1)
(1\rtimes_{\widehat{\rho}}1^0\rtimes_{\widehat{\widehat{\rho}}}h))
$$
by easy computations. Indeed,
\begin{align*}
& N^2 (E_2^{\rho, u}\circ E_3^{\rho, u})((1\rtimes_{\widehat{\rho}}\phi\rtimes_{\widehat{\widehat{\rho}}}1)
(1\rtimes_{\widehat{\rho}}1^0 \rtimes_{\widehat{\widehat{\rho}}}e)
(1\rtimes_{\widehat{\rho}}\tau\rtimes_{\widehat{\widehat{\rho}}}1)
(1\rtimes_{\widehat{\rho}}1^0\rtimes_{\widehat{\widehat{\rho}}}h)) \\
& =N^2 (E_2^{\rho, u}\circ E_3^{\rho, u})((1\rtimes_{\widehat{\rho}}\phi\rtimes_{\widehat{\widehat{\rho}}}e)
(1\rtimes_{\widehat{\rho}}\tau\rtimes_{\widehat{\widehat{\rho}}}h)) \\
& =N^2 (E_2^{\rho, u}\circ E_3^{\rho, u})((1\rtimes_{\widehat{\rho}}\phi)[e_{(1)}\cdot_{\widehat{\widehat{\rho}}}
(1\rtimes_{\widehat{\rho}}\tau)]\rtimes_{\widehat{\widehat{\rho}}}e_{(2)}h) \\
& =N^2 E_2^{\rho, u}((1\rtimes_{\widehat{\rho}}\phi)[e_{(1)}\cdot_{\widehat{\widehat{\rho}}}
(1\rtimes_{\widehat{\rho}}\tau)])
\tau' (e_{(2)}h) \\
& =N^2 E_2^{\rho, u}((1\rtimes_{\widehat{\rho}}\phi)(1\rtimes_{\widehat{\rho}}\tau_{(1)})\tau_{(2)}(e_{(1)}))
\tau' (e_{(2)}h) \\
& =N^2 E_2^{\rho, u}(1\rtimes_{\widehat{\rho}}\phi\tau_{(1)})\tau_{(2)}(e_{(1)})\tau' (e_{(2)}h) \\
& =N^2 (\phi\tau_{(1)})(e' )\tau_{(2)}(e_{(1)})\tau' (e_{(2)}h) \\
& =N^2 (\phi\tau_{(1)})(e' )\tau_{(2)}(e_{(1)}h_{(2)}S(h_{(1)}))\tau' (e_{(2)}h_{(3)}) \\
& =N^2 (\phi\tau_{(1)})(e' )\tau_{(2)}(e_{(1)}h_{(2)})\tau_{(3)}(S(h_{(1)}))\tau' (e_{(2)}h_{(3)}) \\
& =N^2 (\phi\tau_{(1)})(e' )(\tau_{(2)}\tau' )(eh_{(2)})\tau_{(3)}(S(h_{(1)})) \\
& =N(\phi\tau_{(1)})(e' )\tau_{(2)}(S(h)) \\
& =N(\phi_{(1)}\tau_{(1)})(e' )(S^0 (\phi_{(3)})\phi_{(2)}\tau_{(2)})(S(h)) \\
& =N(\phi_{(1)}\tau_{(1)})(e' )S^0 (\phi_{(3)})(S(h_{(2)}))(\phi_{(2)}\tau_{(2)})(S(h_{(1)})) \\
& =N(\phi_{(1)}\tau)(e' S(h_{(1)}))S^0 (\phi_{(2)})(S(h_{(2)})) \\
& =S^0 (\phi)(S(h)) \\
& =\phi(h) ,
\end{align*}
where $e' =e$ and $\tau' =\tau$. We also note that
$$
\phi(h)=N^2 (E_2^{\sigma, v}\circ E_3^{\sigma, v})
((1\rtimes_{\widehat{\sigma}}\phi\rtimes_{\widehat{\widehat{\sigma}}}1)
(1\rtimes_{\widehat{\sigma}}1^0 \rtimes_{\widehat{\widehat{\sigma}}}e)
(1\rtimes_{\widehat{\sigma}}\tau\rtimes_{\widehat{\widehat{\sigma}}}1^0 )
(1\rtimes_{\widehat{\sigma}}1^0 \rtimes_{\widehat{\widehat{\sigma}}}h))
$$
for any $h\in H$, $\phi\in H^0$. Hence for any $h\in H$, $\phi\in H^0$,
\begin{align*}
& \lambda^0 (\phi)(h) \\
&  =N^2 (E_2^{\rho, u}\circ E_3^{\rho, u})(\widehat{\widehat{\Psi}}
(1\rtimes_{\widehat{\sigma}}\phi\rtimes_{\widehat{\widehat{\sigma}}}1)
(1\rtimes_{\widehat{\rho}}1^0 \rtimes_{\widehat{\widehat{\rho}}}e)
(1\rtimes_{\widehat{\rho}}\tau\rtimes_{\widehat{\widehat{\rho}}}1)
(1\rtimes_{\widehat{\rho}}1^0 \rtimes_{\widehat{\widehat{\rho}}}h)) \\
& =N^2 (E_2^{\rho, u}\circ E_3^{\rho, u}\circ\widehat{\widehat{\Psi}})
((1\rtimes_{\widehat{\sigma}}\phi\rtimes_{\widehat{\widehat{\sigma}}}1)
(1\rtimes_{\widehat{\sigma}}1^0 \rtimes_{\widehat{\widehat{\sigma}}}e)
(1\rtimes_{\widehat{\sigma}}\tau\rtimes_{\widehat{\widehat{\sigma}}}1) \\
& \times\widehat{\widehat{\Psi}}^{-1}(1\rtimes_{\widehat{\rho}}1^0 \rtimes_{\widehat{\widehat{\rho}}}h)) \\
& =N^2 (\Psi\circ E_2^{\sigma, v}\circ E_3^{\sigma, v})
((1\rtimes_{\widehat{\sigma}}\phi\rtimes_{\widehat{\widehat{\sigma}}}1)
(1\rtimes_{\widehat{\sigma}}1^0 \rtimes_{\widehat{\widehat{\sigma}}}e)
(1\rtimes_{\widehat{\sigma}}\tau\rtimes_{\widehat{\widehat{\sigma}}}1) \\
& \times\widehat{\widehat{\Psi}}^{-1}(1\rtimes_{\widehat{\rho}}1^0 \rtimes_{\widehat{\widehat{\rho}}}h)) \\
& =\Psi(\phi(\widehat{\widehat{\Psi}}^{-1}(h)))=\phi(\widehat{\widehat{\Psi}}^{-1}(h))
\end{align*}
by Lemmas \ref{lem:equation2} and \ref{lem:equation3}.
Hence by the above equation, for any $h\in H$, $\phi\in H^0$,
\begin{align*}
S^0 (\lambda^0 (\phi))(h) & =\lambda^0 (\phi)(S(h))=\overline{\lambda^0 (\phi)^* (h^* )}
=\overline{\widehat{\Psi}(\phi^* )(h^* )}=\overline{\lambda^0 (\phi^* )(h^* )} \\
& =\overline{\phi^*(\widehat{\widehat{\Psi}}^{-1}(h^* ))}=\phi(S(\widehat{\widehat{\Psi}}^{-1}(h)))
=S^0 (\phi)(\widehat{\widehat{\Psi}}^{-1}(h)) =\lambda^0 (S^0 (\phi))(h) .
\end{align*}
Thus $S^0 (\lambda^0 (\phi))=\lambda^0 (S^0 (\phi))$ for any $\phi\in H^0 $.
Also, for any $h, l\in H$, $\phi\in H^0$,
\begin{align*}
\Delta^0 (\lambda^0 (\phi))(h\otimes l) & = \lambda^0 (\phi)(hl)=\phi(\widehat{\widehat{\Psi}}^{-1}(hl))
=\Delta^0 (\phi)(\widehat{\widehat{\Psi}}^{-1}(h)\otimes\widehat{\widehat{\Psi}}^{-1}(l)) \\
& =\phi_{(1)}(\widehat{\widehat{\Psi}}^{-1}(h))\phi_{(2)}(\widehat{\widehat{\Psi}}^{-1}(l))
=\lambda^0 (\phi_{(1)})(h)\lambda^0 (\phi_{(2)})(l) \\
& =(\lambda^0 (\phi_{(1)})\otimes\lambda^0 (\phi_{(2)}))(h\otimes l) \\
& =(\lambda^0 \otimes\lambda^0 )(\Delta^0 (\phi))(h\otimes l) .
\end{align*}
Hence $\Delta^0 (\lambda^0 (\phi))=(\lambda^0 \otimes\lambda^0 )(\Delta^0 (\phi))$ for any $\phi\in H^0$.
Furthermore, for any $\phi\in H^0 $,
$$
\epsilon^0 (\lambda^0 (\phi))=\lambda^0 (\phi)(1)=\phi(\widehat{\widehat{\Psi}}^{-1}(1))=\phi(1)=\epsilon(1) .
$$
Therefore, we obtain the conclusion.
\end{proof}

\begin{lemma}\label{lem:conjugate}With the above notations,
$$
\widehat{\widehat{\rho}}\circ\widehat{\Psi}=(\widehat{\Psi}\otimes\lambda^0 )\circ\widehat{\widehat{\sigma}}
$$
on $D_1$.
\end{lemma}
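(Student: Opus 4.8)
The plan is to verify the claimed identity on a generating set of $D_1$ and to observe that both sides are $*$-homomorphisms into $C_1 \otimes H^0$. Since $D_1 = D\rtimes_{\widehat{\sigma}}H^0$, every element of $D_1$ is a linear combination of products $(d\rtimes_{\widehat{\sigma}}1^0 )(1\rtimes_{\widehat{\sigma}}\psi)$ with $d\in D$ and $\psi\in H^0$, so $D_1$ is generated as a $C^*$-algebra by $D\rtimes_{\widehat{\sigma}}1^0$ and $1\rtimes_{\widehat{\sigma}}H^0$. The maps $\widehat{\widehat{\rho}}\circ\widehat{\Psi}$ and $(\widehat{\Psi}\otimes\lambda^0 )\circ\widehat{\widehat{\sigma}}$ are both compositions of $*$-homomorphisms (the coactions are $*$-homomorphisms, $\widehat{\Psi}$ is a $C^*$-isomorphism by Lemma \ref{lem:equation2}, and $\lambda^0$ is a $C^*$-algebra automorphism by Lemma \ref{lem:Hopfiso}), hence $*$-homomorphisms themselves; thus it suffices to see that they agree on these two subsets.

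On $D$: by Lemma \ref{lem:equation2} we have $\widehat{\Psi}|_D = \Psi$, and $\Psi(d)$ lies in $C\subset C_1$. Since the double dual coaction $\widehat{\widehat{\rho}}$ is trivial on $C$, i.e. $\widehat{\widehat{\rho}}(c) = c\otimes 1^0$ for every $c\in C$, we get $(\widehat{\widehat{\rho}}\circ\widehat{\Psi})(d) = \Psi(d)\otimes 1^0$. On the other hand $\widehat{\widehat{\sigma}}(d) = d\otimes 1^0$ for $d\in D$, so, using that $\lambda^0$ is unital, $((\widehat{\Psi}\otimes\lambda^0 )\circ\widehat{\widehat{\sigma}})(d) = \widehat{\Psi}(d)\otimes\lambda^0 (1^0 ) = \Psi(d)\otimes 1^0$. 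Hence the two sides agree on $D$.

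On $1\rtimes_{\widehat{\sigma}}H^0$: by Lemma \ref{lem:Ciso}, $1\rtimes_{\widehat{\sigma}}\psi\in A' \cap D_1$, and by the very definition of $\lambda^0$ we have $\widehat{\Psi}(1\rtimes_{\widehat{\sigma}}\psi) = 1\rtimes_{\widehat{\rho}}\lambda^0 (\psi)$ under the identifications $A' \cap D_1 = 1\rtimes_{\widehat{\sigma}}H^0$ and $A' \cap C_1 = 1\rtimes_{\widehat{\rho}}H^0$. Using $\widehat{\widehat{\rho}}(1\rtimes_{\widehat{\rho}}\chi) = (1\rtimes_{\widehat{\rho}}\chi_{(1)})\otimes\chi_{(2)}$ with $\chi = \lambda^0 (\psi)$, and then the comultiplicativity $\Delta^0 (\lambda^0 (\psi)) = (\lambda^0 \otimes\lambda^0 )(\Delta^0 (\psi))$ from Lemma \ref{lem:Hopfiso}, the left-hand side equals $(1\rtimes_{\widehat{\rho}}\lambda^0 (\psi_{(1)}))\otimes\lambda^0 (\psi_{(2)})$. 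For the right-hand side, $\widehat{\widehat{\sigma}}(1\rtimes_{\widehat{\sigma}}\psi) = (1\rtimes_{\widehat{\sigma}}\psi_{(1)})\otimes\psi_{(2)}$, and applying $\widehat{\Psi}\otimes\lambda^0$ gives $(1\rtimes_{\widehat{\rho}}\lambda^0 (\psi_{(1)}))\otimes\lambda^0 (\psi_{(2)})$ as well. So the two sides agree on $1\rtimes_{\widehat{\sigma}}H^0$, and therefore on all of $D_1$.

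The only delicate point is to pin down the description of the double dual coactions $\widehat{\widehat{\rho}}$, $\widehat{\widehat{\sigma}}$ on the two families of generators — that they are trivial on $C$, $D$ and are given by $\Delta^0$ on $1\rtimes_{\widehat{\rho}}H^0$, $1\rtimes_{\widehat{\sigma}}H^0$ — and to invoke the precise normalization built into the definition of $\lambda^0$; once these are recorded, the verification is the short computation above, and the reduction to generators presents no difficulty.
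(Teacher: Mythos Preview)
Your proof is correct and follows essentially the same approach as the paper: both verify the identity on the generators $D\rtimes_{\widehat{\sigma}}1^0$ and $1\rtimes_{\widehat{\sigma}}H^0$ of $D_1$, using Lemma~\ref{lem:equation2} for the first part and the comultiplicativity of $\lambda^0$ from Lemma~\ref{lem:Hopfiso} for the second. You are slightly more explicit in justifying the reduction to generators via the $*$-homomorphism property, which the paper leaves tacit.
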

\begin{proof}For any $x\in D$,
\begin{align*}
(\widehat{\widehat{\rho}}\circ\widehat{\Psi})(x\rtimes_{\widehat{\sigma}}1^0 ) &=\widehat{\widehat{\rho}}
(\Psi(x)\rtimes_{\widehat{\rho}}1^0 )= (\Psi(x)\rtimes_{\widehat{\sigma}}1^ 0)\otimes 1^0
=\widehat{\Psi}(x\rtimes_{\widehat{\sigma}}1^0 )\otimes 1^0 \\
& =(\widehat{\Psi}\otimes\lambda^0 )((x\rtimes_{\widehat{\sigma}}1^0 )\otimes 1^0 )
=((\widehat{\Psi}\otimes\lambda^0 )\circ\widehat{\widehat{\sigma}})(x\rtimes_{\widehat{\sigma}}1^0 )
\end{align*}
by Lemma \ref{lem:equation2}. Also, for any $\phi\in H^0$,
\begin{align*}
(\widehat{\widehat{\rho}}\circ\widehat{\Psi})(1\rtimes_{\widehat{\sigma}}\phi) & =
\widehat{\widehat{\rho}}(1\rtimes_{\widehat{\rho}}\lambda^0 (\phi))=
(1\rtimes_{\widehat{\rho}}\lambda^0 (\phi)_{(1)})\otimes\lambda(\phi)_{(2)} \\
& =(1\rtimes_{\widehat{\rho}}\lambda^0 (\phi_{(1)}))\otimes\lambda^0 (\phi_{(2)}) 
=\widehat{\Psi}(1\rtimes_{\widehat{\sigma}}\phi_{(1)})\otimes\lambda^0 (\phi_{(2)}) \\
& =(\widehat{\Psi}\otimes\lambda^0 )((1\rtimes_{\widehat{\sigma}}\phi_{(1)})\otimes\phi_{(2)})
=((\widehat{\Psi}\otimes\lambda^0 )\circ\widehat{\widehat{\sigma}})(1\rtimes_{\widehat{\sigma}}\phi)
\end{align*}
by Lemma \ref{lem:Hopfiso}. Therefore, we obtain that
$\widehat{\widehat{\rho}}\circ\widehat{\Psi}=(\widehat{\Psi}\otimes\lambda^0 )\circ\widehat{\widehat{\sigma}}$.
\end{proof}

\begin{prop}\label{prop:key2}Let $(\rho, u)$ and $(\sigma, v)$ be twisted coactions of $H^0$ on $A$.
Let $C=A\rtimes_{\rho, u}H$ and $D=A\rtimes_{\sigma, v}H$.
We suppose that there is an isomorphism $\Psi$ of $C$ onto $D$ satisfying that
$$
\Psi|_A =\id_A , \quad E_1^{\rho, u}\circ\Psi=E_1^{\sigma, v} ,
$$
where $E_1^{\rho, u}$ and $E_1^{\sigma, v}$ are the canonical conditional expectations from $C$ and $D$
onto $A$ defined by
$$
E_1^{\rho, u}(a\rtimes_{\rho, u}h)=\tau(h)a , \quad E_1^{\sigma, v}(a\rtimes_{\sigma, v}h)=\tau(h)a
$$
for any $a\in A$, $h\in H$, respectively. Furthermore, we suppose that $A' \cap C=\BC1$. Then there is a
$C^*$-Hopf algebra automorphism $\lambda^0$ of $H^0$ such that $(\rho, u)$ is
strongly Morita equivalent to the twisted coaction
$((\id_A \otimes \lambda^0 )\circ \sigma,\, (\id_A \otimes \lambda^0 \otimes \lambda^0 )(v))$.
\end{prop}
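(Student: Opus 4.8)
The plan is to reduce the assertion to the level of the second dual coactions, where Lemmas~\ref{lem:equation2}, \ref{lem:equation3}, \ref{lem:Hopfiso} and \ref{lem:conjugate} already furnish an isomorphism $\widehat\Psi$ of $D_1$ onto $C_1$ implementing a conjugacy of $\widehat{\widehat\rho}$ and $\widehat{\widehat\sigma}$ up to the $C^*$-Hopf algebra automorphism $\lambda^0$ of $H^0$, and then to descend back to $A$ by invoking that a twisted coaction is strongly Morita equivalent to its second dual (Remark~\ref{remark:review}).

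First I would observe that the hypotheses of the Proposition place us in the situation of Lemmas~\ref{lem:equation}--\ref{lem:conjugate} (taking $\Psi$, or its inverse, as the isomorphism between $C$ and $D$ supplied there): since the isomorphism restricts to $\id_A$, the condition $A'\cap C=\BC1$ forces $A'\cap D=\BC1$, hence $A'\cap D_1\cong H^0$ by Lemma~\ref{lem:Ciso}, and $\lambda^0:=\widehat\Psi|_{A'\cap D_1}$ is a $C^*$-Hopf algebra automorphism of $H^0$ by Lemma~\ref{lem:Hopfiso}. Moreover Lemma~\ref{lem:conjugate} yields, on $C_1$,
$$
\widehat{\widehat\rho}=(\widehat\Psi\otimes\lambda^0)\circ\widehat{\widehat\sigma}\circ\widehat\Psi^{-1},
$$
where $\widehat{\widehat\rho}$ and $\widehat{\widehat\sigma}$ are the (untwisted) second dual coactions of $H^0$ on $C_1=C\rtimes_{\widehat\rho}H^0$ and $D_1=D\rtimes_{\widehat\sigma}H^0$.

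Next I would identify the second dual of the twisted coaction $(\sigma',v'):=((\id_A\otimes\lambda^0)\circ\sigma,\ (\id_A\otimes\lambda^0\otimes\lambda^0)(v))$. Since $\lambda^0$ is a $C^*$-Hopf algebra automorphism of $H^0$, its transpose $\gamma$ is a $C^*$-Hopf algebra automorphism of $H$ fixing the Haar trace $\tau$; a routine computation then gives an isomorphism $\kappa$ of $A\rtimes_{\sigma',v'}H$ onto $D=A\rtimes_{\sigma,v}H$, $a\rtimes_{\sigma',v'}h\mapsto a\rtimes_{\sigma,v}\gamma(h)$, which is the identity on $A$, intertwines the canonical conditional expectations, and satisfies $\widehat\sigma\circ\kappa=(\kappa\otimes\gamma)\circ\widehat{\sigma'}$. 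Passing to crossed products, $\kappa$ extends to an isomorphism $\widehat\kappa$ of $D_1':=(A\rtimes_{\sigma',v'}H)\rtimes_{\widehat{\sigma'}}H^0$ onto $D_1$ with $\widehat{\widehat\sigma}\circ\widehat\kappa=(\widehat\kappa\otimes(\lambda^0)^{-1})\circ\widehat{\widehat{\sigma'}}$, that is, $\widehat{\widehat{\sigma'}}=(\widehat\kappa^{-1}\otimes\lambda^0)\circ\widehat{\widehat\sigma}\circ\widehat\kappa$. Comparing with the displayed identity for $\widehat{\widehat\rho}$, the isomorphism $\theta:=\widehat\Psi\circ\widehat\kappa$ of $D_1'$ onto $C_1$ satisfies $\widehat{\widehat\rho}\circ\theta=(\theta\otimes\id_{H^0})\circ\widehat{\widehat{\sigma'}}$, so $\widehat{\widehat\rho}$ is the transport of $\widehat{\widehat{\sigma'}}$ by the isomorphism $\theta$; hence by Lemma~\ref{lem:iso} the coactions $\widehat{\widehat\rho}$ and $\widehat{\widehat{\sigma'}}$ are strongly Morita equivalent. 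Finally, by Remark~\ref{remark:review} (applied to $(\rho,u)$ and to the twisted coaction $(\sigma',v')$ of Lemma~\ref{lem:induced}), $(\rho,u)$ is strongly Morita equivalent to $\widehat{\widehat\rho}$ and $(\sigma',v')$ to $\widehat{\widehat{\sigma'}}$; combining this with the previous step and transitivity of strong Morita equivalence for twisted coactions, $(\rho,u)$ is strongly Morita equivalent to $(\sigma',v')$, as claimed.

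I expect the main obstacle to be the third paragraph: one must carry the automorphism $\lambda^0$ cleanly through the twisted crossed product $A\rtimes_{\sigma',v'}H$ and its dual, keeping careful track of the cocycle $v'$, the antipode, and the canonical conditional expectations, so as to produce $\theta$ with $\widehat{\widehat\rho}\circ\theta=(\theta\otimes\id_{H^0})\circ\widehat{\widehat{\sigma'}}$ as an identity of coactions and not merely of $C^*$-algebras. Once $\theta$ is available, the remainder is only Remark~\ref{remark:review}, Lemma~\ref{lem:iso} and transitivity.
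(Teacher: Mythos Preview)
Your argument is correct and rests on the same core ingredients as the paper's proof (Lemmas~\ref{lem:equation2}--\ref{lem:conjugate}, Lemma~\ref{lem:iso}, Remark~\ref{remark:review}), but organizes the descent on the $\sigma$-side differently. The paper unpacks the link between $\widehat{\widehat\sigma}$ and $\sigma$ explicitly: it invokes the duality theorem \cite[Theorem~3.3]{KT2:coaction} to write $\widehat{\widehat\sigma}$ as exterior equivalent to the transport of $(\sigma\otimes\id_{M_N(\BC)},\,v\otimes I_N)$ along $\Phi_\sigma$, applies Lemma~\ref{lem:exterior} to twist by $\lambda^0$, and then uses Lemma~\ref{lem:iso} together with \cite[Lemma~4.10]{Kodaka:equivariance} to reach $((\id_A\otimes\lambda^0)\circ\sigma,\,(\id_A\otimes\lambda^0\otimes\lambda^0)(v))$; finally Remark~\ref{remark:review} connects $(\rho,u)$ to $\widehat{\widehat\rho}$. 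You instead build the crossed-product isomorphism $\kappa$ and its extension $\widehat\kappa$ directly (this is precisely the content of Lemma~\ref{lem:iso3}, which the paper defers to the next section), obtain a genuine conjugacy $\theta=\widehat\Psi\circ\widehat\kappa$ between $\widehat{\widehat\rho}$ and $\widehat{\widehat{\sigma'}}$, and then apply Remark~\ref{remark:review} symmetrically to both $(\rho,u)$ and $(\sigma',v')$. Your packaging is a bit cleaner---it treats Remark~\ref{remark:review} as a black box on both sides and never opens the Takesaki--Takai-type duality---at the cost of anticipating Lemma~\ref{lem:iso3}; the paper's route, conversely, makes the role of exterior equivalence (Lemma~\ref{lem:exterior}) more visible. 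The step you flagged as the main obstacle (that $\kappa$ respects the twisted multiplication and that $\widehat\kappa$ intertwines $\widehat{\widehat{\sigma'}}$ with $(\id\otimes(\lambda^0)^{-1})\circ\widehat{\widehat\sigma}$) is indeed routine once one uses that the transpose $\gamma$ of $\lambda^0$ is a $C^*$-Hopf algebra automorphism of $H$, so that $h\cdot_{\sigma',v'}a=\gamma(h)\cdot_{\sigma,v}a$ and $\widehat{v'}(h,l)=\widehat{v}(\gamma(h),\gamma(l))$.
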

\begin{proof}By \cite [Theorem 3.3]{KT2:coaction}, there are isomorphism
of $\Phi_{\sigma}$ of $A\otimes M_N (\BC)$ onto $D_1$ and a unitary element
$U_{\sigma}\in D_1 \otimes H^0$ such that
\begin{align*}
\Ad (U_{\sigma})\circ\widehat{\widehat{\sigma}} & =(\Phi_{\sigma}\otimes\id_{H^0})
\circ(\sigma\otimes\id_{M_N (\BC)})\circ\Phi_{\sigma}^{-1} , \\
(\Phi_{\sigma}\otimes\id_{H^0}\otimes\id_{H^0})(v\otimes I_N) & =(U_{\sigma}\otimes 1^0 )
(\widehat{\widehat{\sigma}}\otimes\id_{H^0})(U_{\sigma})(\id\otimes\Delta^0 )(U_{\sigma}^* ) ,
\end{align*}
where we identify $M_N (\BC)\otimes H^0$ with $H^0 \otimes M_N (\BC)$. Thus by Lemma \ref{lem:exterior},
$(\id_{D_1}\otimes\lambda^0 )\circ\widehat{\widehat{\sigma}}$ is exterior equivalent to
the twisted coaction
\begin{align*}
((\id_{D_1}\otimes\lambda^0 )& \circ(\Phi_{\sigma}\otimes\id_{H^0 })
\circ(\sigma\otimes\id_{M_N (\BC)})\circ\Phi_{\sigma}^{-1} \, ,  \\
& (\id_{D_1}\otimes\lambda^0 \otimes\lambda^0 )((\Phi_{\sigma}\otimes\id_{H^0}\otimes\id_{H^0})(v\otimes I_N ))) ,
\end{align*}
where $\lambda^0 $ is the $C^*$-Hopf automorphism of $H^0$ defined before Lemma \ref{lem:Ciso}.
Since
\begin{align*}
& (\id_{D_1}\otimes\lambda^0 ) \circ(\Phi_{\sigma}\otimes\id_{H^0 })
\circ(\sigma\otimes\id_{M_N (\BC)})\circ\Phi_{\sigma}^{-1} \\
& =(\Phi_{\sigma}\otimes\id_{H^0})\circ(\id_{A\otimes M_N (\BC)}\otimes\lambda^0 )
\circ(\sigma\otimes\id_{M_N (\BC)})\circ\Phi_{\sigma}^{-1},
\end{align*}
and
\begin{align*}
& (\id_{D_1}\otimes\lambda^0 \otimes\lambda^0 )
((\Phi_{\sigma}\otimes\id_{H^0}\otimes\id_{H^0})(v\otimes I_N ))) \\
& =(\Phi_{\sigma}\otimes\id_{H^0}\otimes\id_{H^0})
((\id_{A\otimes M_N (\BC)}\otimes\lambda^0 \otimes\lambda^0 )(v\otimes I_N )) ,
\end{align*}
by Lemma \ref{lem:iso}, the twisted coaction
$$
((\id_{A\otimes M_N (\BC)}\otimes\lambda^0 )
\circ(\sigma\otimes\id_{M_N (\BC)}) \, , \,
(\id_{A\otimes M_N (\BC)}\otimes\lambda^0 \otimes\lambda^0 )(v\otimes I_N ))
$$
is strongly Morita equivalent to the twisted coaction
\begin{align*}
((\id_{D_1}\otimes\lambda^0 )& \circ(\Phi_{\sigma}\otimes\id_{H^0 })
\circ(\sigma\otimes\id_{M_N (\BC)})\circ\Phi_{\sigma}^{-1} \, ,  \\
& (\id_{D_1}\otimes\lambda^0 \otimes\lambda^0 )((\Phi_{\sigma}\otimes\id_{H^0}\otimes\id_{H^0})(v\otimes I_N ))) ,
\end{align*}
Hence $(\id_{D_1}\otimes\lambda^0 )\circ\widehat{\widehat{\sigma}}$ is strongly Morita equivalent to
$$
((\id_{A\otimes M_N (\BC)}\otimes\lambda^0 )
\circ(\sigma\otimes\id_{M_N (\BC)}) \, , \,
(\id_{A\otimes M_N (\BC)}\otimes\lambda^0 \otimes\lambda^0 )(v\otimes I_N )) .
$$
Thus by \cite [Lemma 4.10]{Kodaka:equivariance}, $(\id_{D_1}\otimes\lambda^0 )\circ\widehat{\widehat{\sigma}}$
is strongly Morita equivalent to
$$
((\id_A \otimes\lambda^0 )\circ\sigma \, , \, 
(\id_A \otimes\lambda^0 \otimes \lambda^0 )(v)) .
$$
On the other hand, by Lemmas \ref{lem:exterior} and \ref{lem:conjugate},
$\widehat{\widehat{\rho}}$ is strongly Moriat equivalent to
$(\id_{D_1}\otimes\lambda^0 )\circ\widehat{\widehat{\sigma}}$.
Since $\widehat{\widehat{\rho}}$ is strongly Morita equivalent to
$(\rho, u)$ by Remark \ref{remark:review}, $(\rho, u)$ is strongly Morita equivalent to
$$
((\id_A \otimes\lambda^0 )\circ\sigma \, , \, 
(\id_A \otimes\lambda^0 \otimes \lambda^0 )(v)) .
$$
\end{proof}

\begin{cor}\label{cor:key3}Let $(\rho, u)$ and $(\sigma, v)$ be twisted coactions of $H^0$ on $A$.
Let $C=A\rtimes_{\rho, u}H$ and $D=A\rtimes_{\sigma, v}H$.
We regard $A$ as an $A-A$equivalent bimodule in the usual way.
We suppose that the unital inclusions $A\subset C$ and $A\subset D$ are
strongly Morita equivalent with respect to a $C-D$-equivalence bimodule
 $Y$ and its closed subspace $A$, that is, we assume that the $A-A$-equivalence
 bimodule $A$ is included in $Y$. Furthermore, we suppose that $A' \cap C=\BC1$. Then there is a
$C^*$-Hopf algebra automorphism $\lambda^0$ of $H^0$ such that $(\rho, u)$ is
strongly Morita equivalent to the twisted coaction
$$
((\id_A \otimes \lambda^0 )\circ\sigma ,\, (\id_A \otimes \lambda^0 \otimes \lambda^0 )(v)) .
$$
\end{cor}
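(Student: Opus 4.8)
The plan is to deduce the corollary directly from Lemma~\ref{lem:equation} and Proposition~\ref{prop:key2}, since the hypotheses stated here are exactly the standing assumptions of this section just before Lemma~\ref{lem:equation}. First I would invoke Lemma~\ref{lem:equation}: we are given that the unital inclusions $A\subset C$ and $A\subset D$ are strongly Morita equivalent with respect to a $C-D$-equivalence bimodule $Y$ containing the $A-A$-equivalence bimodule $A$ as a closed subspace, and that $A'\cap C=\BC1$; hence Lemma~\ref{lem:equation} supplies an isomorphism $\Psi$ of $D$ onto $C$ with $\Psi|_A=\id_A$ and $E_1^{\rho,u}\circ\Psi=E_1^{\sigma,v}$, where $E_1^{\rho,u}$ and $E_1^{\sigma,v}$ are the canonical conditional expectations from $C$ and $D$ onto $A$.

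Next I would pass to the inverse isomorphism $\Psi^{-1}$ of $C$ onto $D$, in order to match the direction convention of Proposition~\ref{prop:key2}. Clearly $\Psi^{-1}|_A=\id_A$, and precomposing the identity $E_1^{\rho,u}\circ\Psi=E_1^{\sigma,v}$ with $\Psi^{-1}$ gives $E_1^{\sigma,v}\circ\Psi^{-1}=E_1^{\rho,u}$; thus $\Psi^{-1}$ is an isomorphism of $C$ onto $D$ which restricts to the identity on $A$ and intertwines the canonical conditional expectations onto $A$, which provides precisely the input required by Proposition~\ref{prop:key2}. Since moreover $A'\cap C=\BC1$, Proposition~\ref{prop:key2} applies and yields a $C^*$-Hopf algebra automorphism $\lambda^0$ of $H^0$ such that $(\rho,u)$ is strongly Morita equivalent to $((\id_A\otimes\lambda^0)\circ\sigma,\,(\id_A\otimes\lambda^0\otimes\lambda^0)(v))$, which is the assertion.

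At this point there is essentially nothing left to do: all of the substance has already been carried out in the section---the construction of $\Psi$ (Lemma~\ref{lem:equation}), its lifts $\widehat{\Psi}$ and $\widehat{\widehat{\Psi}}$ to the iterated crossed products (Lemmas~\ref{lem:equation2} and~\ref{lem:equation3}), the identification $A'\cap D_1\cong H^0$ and the verification that the induced automorphism $\lambda^0$ is a $C^*$-Hopf algebra automorphism (Lemmas~\ref{lem:Ciso} and~\ref{lem:Hopfiso}), the conjugation identity $\widehat{\widehat{\rho}}\circ\widehat{\Psi}=(\widehat{\Psi}\otimes\lambda^0)\circ\widehat{\widehat{\sigma}}$ (Lemma~\ref{lem:conjugate}), and the exterior-equivalence and second-dual argument of Proposition~\ref{prop:key2} (using Lemmas~\ref{lem:exterior}, \ref{lem:iso} and Remark~\ref{remark:review}). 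Consequently the only point that needs any attention is the purely formal bookkeeping of the direction of $\Psi$ and of the order in which it is composed with the conditional expectations, which is dispatched by the passage to $\Psi^{-1}$; I do not expect any genuine obstacle.
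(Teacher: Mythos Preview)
Your proposal is correct and follows the same approach as the paper, which simply says the corollary is immediate from Proposition~\ref{prop:key2} together with the discussion preceding Lemma~\ref{lem:equation}. The only wrinkle is your detour through $\Psi^{-1}$: the stated direction ``$\Psi$ of $C$ onto $D$'' in Proposition~\ref{prop:key2} is a typo (the condition $E_1^{\rho,u}\circ\Psi=E_1^{\sigma,v}$ only typechecks for $\Psi:D\to C$, and the proof of the proposition uses the $\Psi:D\to C$ from the section throughout), so the map from Lemma~\ref{lem:equation} feeds directly into Proposition~\ref{prop:key2} without inversion.
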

\begin{proof}
This is immediate by Proposition \ref{prop:key2} and the discussions before Lemma \ref{lem:equation}.
\end{proof}

\section{The main result}\label{sec:main}In this section, we present the main result in the paper.
We recall the previous discussions: Let $A$ and $B$ be unital  $C^*$-algebras
and $H$ a finite dimensional $C^*$-Hopf algebra with its dual $C^*$-Hopf algebra $H^0$.
Let $(\rho, u)$ and $(\sigma, v)$ be twisted coactions of $H^0$ on $A$ and $B$, respectively.
Le $C=A\rtimes_{\rho, u}H$ and $D=B\rtimes_{\sigma, v}H$ and let $C_1 =C\rtimes_{\widehat{\rho}}H^0$
and $D_1 =D\rtimes_{\widehat{\sigma}}H^0$, where $\widehat{\rho}$ and $\widehat{\sigma}$
are the dual coactions of $H$ on $C$ and $D$ induced by $(\rho, u)$ and $(\sigma, v)$,
respectively. W suppose that the inclusions $A\subset C$ and $B\subset D$ are strongly
Morita equivalent with respect to a $C-D$-equivalence bimodule $Y$ and its closed subspace $X$
in the sense of \cite [Definition 2.1]{KT4:morita}. Furthermore, we suppose that
$A' \cap C=\BC1$. Then by Proposition \ref{prop:leftcoaction} and Lemma \ref{lem:action}
there are a coaction $\beta$ of $H$ on $D$ and a coaction $\mu$ of $H$ on $Y$
such that $(C, D, Y, \widehat{\rho}, \beta, \mu, H)$ is a covariant system.
Hence the coactions $\widehat{\rho}$ and $\beta$ of $H$ on $D$ are strongly
Morita equivalent. Moreover, by the discussions at the end of Section \ref{sec:coaction},
the unital inclusions $D\subset D_1 (=D\rtimes_{\widehat{\sigma}}H^0 )$ and
$D\subset D\rtimes_{\beta}H^0 $ are strongly Morita equivalent with respect to
a $D_1 -D\rtimes_{\beta}H^0$-equivalence bimodule
$W(=\widetilde{Y_1}\otimes_{C_1}(Y\rtimes_{\mu}H^0 ))$ and its closed subspace $D$,
where we regard $D$ as a $D-D$-equivalence bimodule in the usual way and we can also see
that $D$ is included in $W$
by the discussions at the end of Section \ref{sec:coaction}.
Since $A' \cap C=\BC1$, by \cite [Lemma 10.3]{KT4:morita} and \cite [the proof of Proposition 2.7.3]
{Watatani:index} $D' \cap D_1 =\BC1$.
Thus by Corollary \ref{cor:key3}, there is a $C^*$-Hopf algebra automorphism $\lambda$ of $H$
such that $\beta$ is strongly Morita equivalent to $(\id_D \otimes\lambda)\circ\widehat{\sigma}$.
Hence $\widehat{\rho}$ is strongly Morita equivalent to $(\id_D \otimes\lambda)\circ\widehat{\sigma}$.
Let $\lambda^0$ be the $C^*$-Hopf algebra automorphism of $H^0$ induced by $\lambda$, that is,
we define $\lambda^0$ as follows: For any $\psi\in H^0$, $h\in H$,
$$
\lambda^0 (\psi)(h)=\psi(\lambda^{-1}(h)) .
$$
Let
$$\sigma_{\lambda^0}=(\id_B \otimes\lambda^0 )\circ\sigma , \quad
v_{\lambda^0}=(\id_B \otimes\lambda^0 \otimes\lambda^0 )(v) .
$$
Then by Lemma \ref{lem:induced}, $(\sigma_{\lambda^0 }, v_{\lambda^0})$ is a twisted coaction
of $H^0$ on $B$. Let $D_{\lambda^0} =B\rtimes_{\sigma_{\lambda^0}, v_{\lambda^0}}H$.

\begin{lemma}\label{lem:iso3}With the above notations,
let $\pi$ be the map from $D$ to $D_{\lambda^0}$ defined by
$$
\pi(b\rtimes_{\sigma, v}h)=b\rtimes_{\sigma_{\lambda^0}, v_{\lambda^0}}\lambda(h)
$$
for any $b\in B$, $h\in H$. Then $\pi$ is an isomorphism of $D$ onto $D_{\lambda^0}$ such that
$$
\widehat{\sigma_{\lambda^0}}\circ\pi=(\pi\otimes\id_{H})\circ(\id_D \otimes\lambda)\circ\widehat{\sigma} ,
$$
where $\widehat{\sigma_{\lambda^0}}$ is the dual coaction of $(\sigma_{\lambda^0}, v_{\lambda^0})$,
a coaction of $H$ on $D_{\lambda^0}$.
\end{lemma}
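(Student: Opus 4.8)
The plan is to verify the two assertions of the lemma separately: first that $\pi$ is a well-defined $*$-isomorphism, and second that it intertwines the dual coactions as claimed. For the first assertion, I would observe that $\pi$ is induced by the identity on $B$ and by the $C^*$-Hopf algebra automorphism $\lambda$ on $H$. Concretely, the twisted crossed product $D=B\rtimes_{\sigma,v}H$ is, as a vector space, $B\otimes H$, with multiplication and involution determined by the twisted action $h\cdot_{\sigma,v}b=(\id\otimes h)(\sigma(b))$ and the $2$-cocycle $\widehat{v}$. Replacing $(\sigma,v)$ by $(\sigma_{\lambda^0},v_{\lambda^0})=((\id_B\otimes\lambda^0)\circ\sigma,\,(\id_B\otimes\lambda^0\otimes\lambda^0)(v))$ and simultaneously applying $\lambda$ to the $H$-component is exactly the compatible change of variables: the key identity is that for $h\in H$, $b\in B$,
$$
h\cdot_{\sigma_{\lambda^0},v_{\lambda^0}}b=(\id\otimes h)((\id_B\otimes\lambda^0)(\sigma(b)))=(\id\otimes(\lambda^0)^*(h))(\sigma(b))=\lambda^{-1}(h)\cdot_{\sigma,v}b,
$$
using $\lambda^0(\psi)(h)=\psi(\lambda^{-1}(h))$, so that the twisted action relative to $\lambda(h)$ in $D_{\lambda^0}$ matches the one relative to $h$ in $D$. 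Similarly $\widehat{v_{\lambda^0}}(\lambda(h),\lambda(l))=\widehat{v}(h,l)$ because $\lambda^0(\psi)(\lambda(h))=\psi(h)$. Checking multiplicativity and $*$-preservation of $\pi$ then reduces to these two identities together with $\lambda(h_{(1)})\otimes\lambda(h_{(2)})=(\lambda\otimes\lambda)(\Delta(h))$ (since $\lambda$ is a Hopf algebra map) and $\lambda(S(h))=S(\lambda(h))$; bijectivity is clear since $\lambda$ is bijective, with inverse induced by $\lambda^{-1}$.

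For the second assertion, I would compute both sides of $\widehat{\sigma_{\lambda^0}}\circ\pi=(\pi\otimes\id_H)\circ(\id_D\otimes\lambda)\circ\widehat{\sigma}$ on a generator $b\rtimes_{\sigma,v}h$. The left side is
$$
\widehat{\sigma_{\lambda^0}}(\pi(b\rtimes_{\sigma,v}h))=\widehat{\sigma_{\lambda^0}}(b\rtimes_{\sigma_{\lambda^0},v_{\lambda^0}}\lambda(h))=(b\rtimes_{\sigma_{\lambda^0},v_{\lambda^0}}\lambda(h)_{(1)})\otimes\lambda(h)_{(2)},
$$
and since $\lambda$ is a coalgebra map this equals $(b\rtimes_{\sigma_{\lambda^0},v_{\lambda^0}}\lambda(h_{(1)}))\otimes\lambda(h_{(2)})$. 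The right side is
$$
(\pi\otimes\id_H)((\id_D\otimes\lambda)(\widehat{\sigma}(b\rtimes_{\sigma,v}h)))=(\pi\otimes\id_H)((b\rtimes_{\sigma,v}h_{(1)})\otimes\lambda(h_{(2)}))=(b\rtimes_{\sigma_{\lambda^0},v_{\lambda^0}}\lambda(h_{(1)}))\otimes\lambda(h_{(2)}),
$$
and the two sides agree. By linearity and continuity this extends to all of $D$, giving the claimed intertwining relation.

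The computations are essentially routine once the translation identities for $\lambda^0$ and $\lambda$ are isolated; there is no serious obstacle. The only point requiring a little care is keeping the twisted-product structure (the $2$-cocycle $\widehat{v}$) transported correctly, i.e.\ confirming that $\pi$ really is a homomorphism and not merely a linear bijection — this is where the identity $\widehat{v_{\lambda^0}}(\lambda(h),\lambda(l))=\widehat{v}(h,l)$, equivalently $(\id_B\otimes\lambda^0\otimes\lambda^0)(v)$ evaluated against $\lambda(h)\otimes\lambda(l)$ returns $v$ evaluated against $h\otimes l$, does the work, and it follows directly from the defining relation $\lambda^0(\psi)(\lambda(h))=\psi(h)$.
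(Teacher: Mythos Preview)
Your proposal is correct and follows the same approach as the paper, which simply states that the lemma is ``immediate by routine computations''; you have spelled out exactly those computations (the transport of the twisted action and $2$-cocycle under $\lambda$, and the check of the intertwining relation on generators $b\rtimes_{\sigma,v}h$). There is nothing more to add.
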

\begin{proof}This is immediate by routine computations.
\end{proof}

\begin{thm}\label{thm:main}Let $A$ and $B$ be unital $C^*$-algebras and $H$ a finite
dimensional $C^*$-Hopf algebra with its dual $C^*$-Hopf algebra $H^0$.
Let $(\rho, u)$ and $(\sigma, v)$ be twisted coactions of $H^0$ on $A$ and $B$, respectively.
Let $C=A\rtimes_{\rho, u}H$ and $D=B\rtimes_{\sigma ,v}H$. We suppose that
the unital inclusions $A\subset C$ and $B\subset D$ are strongly Morita equivalent. Also,
we suppose that $A' \cap C=\BC1$. Then there is a $C^*$-Hopf algebra automorphism
$\lambda^0 $ of $H^0$ such that the twisted coaction $(\rho, u)$ is strongly Morita equivalent
to the twisted coaction
$$
((\id_B \otimes \lambda^0 )\circ\sigma , \, (\id_B \otimes\lambda^0 \otimes\lambda^0 )(v)) ,
$$
induced by $(\sigma, v)$ and $\lambda^0$.
\end{thm}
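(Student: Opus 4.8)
The plan is to assemble the ingredients already prepared in this section. Recall that the discussion preceding the statement has produced, from the strong Morita equivalence of the unital inclusions $A\subset C$ and $B\subset D$ via $(Y,X)$: a coaction $\beta$ of $H$ on $D$ and a coaction $\mu$ of $H$ on $Y$ making $(C,D,Y,\widehat{\rho},\beta,\mu,H)$ a covariant system, so that $\widehat{\rho}$ is strongly Morita equivalent to $\beta$; the strong Morita equivalence of the unital inclusions $D\subset D_1$ and $D\subset D\rtimes_{\beta}H^0$ with respect to $W$ and its closed subspace (the canonical $D$-$D$-bimodule) $D$; the identity $D'\cap D_1=\mathbf C1$; a $C^*$-Hopf algebra automorphism $\lambda$ of $H$ with $\beta$ strongly Morita equivalent to $(\id_D\otimes\lambda)\circ\widehat{\sigma}$ (obtained by applying Corollary \ref{cor:key3}, in the form valid for coactions of $H$ rather than twisted coactions of $H^0$, to $\widehat{\sigma}$ and $\beta$), and hence $\widehat{\rho}$ strongly Morita equivalent to $(\id_D\otimes\lambda)\circ\widehat{\sigma}$; the dual automorphism $\lambda^0$ of $H^0$ and the induced twisted coaction $(\sigma_{\lambda^0},v_{\lambda^0})=((\id_B\otimes\lambda^0)\circ\sigma,(\id_B\otimes\lambda^0\otimes\lambda^0)(v))$ on $B$ with $D_{\lambda^0}=B\rtimes_{\sigma_{\lambda^0},v_{\lambda^0}}H$; and the isomorphism $\pi\colon D\to D_{\lambda^0}$ of Lemma \ref{lem:iso3}.

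The proof itself then consists of three short moves. First, Lemma \ref{lem:iso3} exhibits $\widehat{\sigma_{\lambda^0}}$ as precisely the coaction of $H$ on $D_{\lambda^0}$ induced from $(\id_D\otimes\lambda)\circ\widehat{\sigma}$ by the isomorphism $\pi$; hence Lemma \ref{lem:iso}, applied to coactions of $H$, gives that $\widehat{\sigma_{\lambda^0}}$ is strongly Morita equivalent to $(\id_D\otimes\lambda)\circ\widehat{\sigma}$, and therefore, by transitivity, $\widehat{\rho}$ is strongly Morita equivalent to $\widehat{\sigma_{\lambda^0}}$. Second, strong Morita equivalence of coactions of $H$ is preserved under passing to the crossed product by $H^0$ equipped with the dual coaction (by the results of \cite{KT3:equivalence} and \cite{KT4:morita}; explicitly, the appropriate crossed product of the implementing equivalence bimodule carries the implementing coaction for the duals), so $\widehat{\widehat{\rho}}$ is strongly Morita equivalent to $\widehat{\widehat{\sigma_{\lambda^0}}}$. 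Third, by Remark \ref{remark:review}, $(\rho,u)$ is strongly Morita equivalent to $\widehat{\widehat{\rho}}$ and $(\sigma_{\lambda^0},v_{\lambda^0})$ is strongly Morita equivalent to $\widehat{\widehat{\sigma_{\lambda^0}}}$. Transitivity of strong Morita equivalence of twisted coactions now yields that $(\rho,u)$ is strongly Morita equivalent to $(\sigma_{\lambda^0},v_{\lambda^0})$, the twisted coaction induced by $(\sigma,v)$ and $\lambda^0$, which is the desired conclusion.

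The main difficulty I anticipate is bookkeeping rather than substance. One must transport Corollary \ref{cor:key3}, stated for twisted coactions of $H^0$, correctly to coactions of $H$ on $D$ — interchanging the roles of $H$ and $H^0$, of $\widehat{\sigma}$ and $\beta$, and of the crossed products $D\rtimes_{\widehat{\sigma}}H^0$ and $D\rtimes_{\beta}H^0$ — so that the hypothesis $A'\cap C=\mathbf C1$ feeds in as the relative-commutant condition $D'\cap D_1=\mathbf C1$, and one must reconcile the direction of the Hopf automorphism thus produced using the symmetry of strong Morita equivalence and its stability under $\id_D\otimes(\text{Hopf automorphism})$. One also has to match the output of Lemma \ref{lem:iso3} verbatim with the hypotheses of Lemma \ref{lem:iso}, and to pin the stability of strong Morita equivalence under the dual coaction to the precise formulations in \cite{KT3:equivalence} and \cite{KT4:morita}. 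None of these steps should require a new idea beyond the machinery already developed.
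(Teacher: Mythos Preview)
Your proposal is correct and follows essentially the same route as the paper: both use the preparatory discussion to obtain that $\widehat{\rho}$ is strongly Morita equivalent to $(\id_D\otimes\lambda)\circ\widehat{\sigma}$, then invoke Lemmas~\ref{lem:iso} and~\ref{lem:iso3} to identify the latter with $\widehat{\sigma_{\lambda^0}}$, and finally descend from the dual coactions to the original twisted coactions. The only cosmetic difference is that the paper cites \cite[Corollary~4.8]{KT3:equivalence} directly for this last descent, whereas you unfold that corollary via the double-dual and Remark~\ref{remark:review}; this is the same content.
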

\begin{proof}By the discussions at the beginning of this section, the dual coaction $\widehat{\rho}$
of $(\rho, u)$ is strongly Morita equivalent to $(\id_D \otimes\lambda)\circ\widehat{\sigma}$. Also,
by Lemmas \ref{lem:iso} and \ref{lem:iso3}, $(\id_D \otimes\lambda)\circ\widehat{\sigma}$ is
strongly Morita equivalent to $((\id_B \otimes \lambda^0 )\circ\sigma)\,\widehat{}$, the dual coaction
of the twisted coaction
$$
((\id_B \otimes\lambda^0 )\circ\sigma , \, (\id\otimes\lambda^0 \otimes\lambda^0 )(v)) ,
$$
where $\lambda^0$ is the $C^*$-Hopf algebra automorphism of $H^0$ induced by $\lambda$.
Hence by \cite [Corollary 4.8]{KT3:equivalence}, $(\rho, u)$ is strongly Morita equivalent to
$$
((\id_B \otimes\lambda^0 )\circ\sigma , \, (\id\otimes\lambda^0 \otimes\lambda^0 )(v)) ,
$$
induced by $(\sigma, v)$ and $\lambda^0$.
\end{proof}

In the rest of the paper, we show the inverse implication of Theorem \ref{thm:main}.

\begin{lemma}\label{lem:inverse1}Let $A\subset C$ and $B\subset D$ be
unital inclusions of unital $C^*$-algebras. We suppose that there is an isomorphism $\pi$
of $D$ onto $C$ such that $\pi|_B$ is an isomorphism of $B$ onto $A$.
Then $A\subset C$ and $B\subset D$ are strongly Morita equivalent.
\end{lemma}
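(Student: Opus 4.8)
The statement to prove is Lemma~\ref{lem:inverse1}: if $A\subset C$ and $B\subset D$ are unital inclusions and there is an isomorphism $\pi\colon D\to C$ restricting to an isomorphism $B\to A$, then the two inclusions are strongly Morita equivalent in the sense of \cite[Definition 2.1]{KT4:morita}.

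The plan is to exhibit the equivalence bimodule explicitly rather than invoke any transitivity machinery. First I would recall from the proof of Lemma~\ref{lem:iso} the construction of the bimodule $X_\pi$ attached to the isomorphism $\pi$: namely, $X_\pi = D$ as a vector space (or rather as a $C$--$D$-bimodule, $Y_\pi = C$ with the obvious left $C$-action and left $C$-valued inner product, and with right $D$-action $x\cdot d = x\pi(d)$ and right $D$-valued inner product $\langle x, y\rangle_D = \pi^{-1}(x^*y)$). One checks in the standard way that $Y_\pi$ is a $C$--$D$-equivalence bimodule, exactly as in Rieffel's treatment of Morita equivalence induced by an isomorphism. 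The point then is that the subspace $A\subset C = Y_\pi$ is a closed subspace that is itself an $A$--$B$-equivalence bimodule: the left $A$-action and left $A$-valued inner product on $A$ are the restrictions of those on $Y_\pi$, while $a\cdot b = a\pi(b)$ and $\langle a_1, a_2\rangle_B = \pi^{-1}(a_1^* a_2) = \pi|_B^{-1}(a_1^* a_2)$ are well defined since $\pi$ maps $B$ onto $A$. So $X := A$ sits inside $Y := Y_\pi$.

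Next I would verify the conditions of \cite[Definition 2.1]{KT4:morita} for the pair $(Y, X)$. Concretely, this means checking that: (i) $X$ is a closed subspace of $Y$; (ii) the $A$--$B$-bimodule structure and inner products on $X$ are the restrictions/compatible versions of those on $Y$; (iii) the linear span of $\langle X, X\rangle_B$ is dense in $B$ and $\overline{{}_A\langle X, X\rangle}$ is dense in $A$ — both are immediate since already $\langle X, X\rangle_B = \pi^{-1}(A^*A) = \pi^{-1}(A) = B$ and ${}_A\langle X, X\rangle = A A^* = A$; and (iv) any remaining compatibility condition in the definition (e.g. that $C\cdot X$ spans a dense subspace of $Y$ and $X\cdot D$ likewise) — here $C\cdot X = C\cdot A = C = Y$ on the nose, and $X\cdot D = A\cdot D = A\pi(D) = AC = C = Y$, so these hold with no closure needed. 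Thus all the defining conditions are satisfied, essentially trivially, because everything in sight is already an equality rather than merely a dense inclusion.

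The proof is therefore almost entirely formal; I do not expect a genuine obstacle. The only point requiring a little care is bookkeeping: being careful about which map ($\pi$ or $\pi^{-1}$) goes where so that the right $D$-action on $Y_\pi$ and the right $B$-action on $X$ are genuinely compatible, and confirming that the inner products restrict correctly (the $B$-valued inner product on $X$ must be the corestriction of the $D$-valued inner product on $Y$ along $B\hookrightarrow D$, which holds because $\pi^{-1}(A^*A)\subset B$). Since the paper says such verifications are ``immediate by routine computations'' in the analogous Lemma~\ref{lem:iso}, the expected write-up is to define $Y_\pi$ and $X = A$, state that $Y_\pi$ is a $C$--$D$-equivalence bimodule as in Lemma~\ref{lem:iso}, observe $X$ is an $A$--$B$-equivalence bimodule sitting inside it, and note that Conditions (1)--(6) of \cite[Definition 2.1]{KT4:morita} are verified directly.
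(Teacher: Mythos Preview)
Your proposal is correct and follows essentially the same idea as the paper: build the equivalence bimodule directly from the isomorphism $\pi$ and take the obvious closed subspace. The only cosmetic difference is sidedness---the paper sets $Y_\pi=D$ as a $D$--$C$-bimodule (with right action $y\cdot c=y\pi^{-1}(c)$, inner product $\la y,z\ra_C=\pi(y^*z)$) and takes $B$ as the closed subspace, whereas you set $Y_\pi=C$ as a $C$--$D$-bimodule and take $A$; these are mirror images of one another. One small correction: the conditions to check are Conditions~(1) and~(2) of \cite[Definition~2.1]{KT4:morita}, not (1)--(6) (the latter belong to Definition~2.2, which concerns conditional expectations).
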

\begin{proof}Let$Y_{\pi}$ be the $D-C$-equivalence bimodule induced by $\pi$,
that is, $Y_{\pi}=D$ as vector spaces and the left $D$-action on $Y_{\pi}$
and the left $D$-valued inner product are defined in the evident way.
We define the right $C$-action and the right $C$-valued inner product as
follows: For any $c\in C$ and $y, z\in Y_{\pi}$,
$$
y\cdot c=y\pi^{-1}(c), \quad \la y, z \ra_C=\pi(y^* z) .
$$
Then $B$ is a closed subset of $Y_{\pi}$ and $Y_{\pi}$ and $B$ satisfy Conditions (1), (2)
in \cite [Definition 2.1]{KT4:morita} by easy computations.
Therefore, $A\subset C$ and $B\subset D$ are strongly Morita equivalent.
\end{proof}

Let $(\rho, u)$ be a twisted coaction of $H^0$ on $A$ and
$\lambda^0$ a $C^*$-Hopf algebra automorphism of $H^0$.
Let $(\rho_{\lambda^0}, \, u_{\lambda^0})$ be the twisted coaction of $H^0$
on $A$ induced by $(\rho, u)$ and $\lambda^0$, that is
$$
\rho_{\lambda^0}=(\id\otimes\lambda^0 )\circ\rho, \quad u_{\lambda^0}=(\id\otimes\lambda^0 \otimes\lambda^0 )(u) .
$$

\begin{lemma}\label{lem:inverse2}With the above notations, the unital inclusions
$A\subset A\rtimes_{\rho, u}H$ and $A\subset A\rtimes_{\rho_{\lambda^0}, u_{\lambda^0}}H$
are strongly Morita equivalent.
\end{lemma}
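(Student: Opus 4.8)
The plan is to reduce the statement to Lemma \ref{lem:inverse1} by exhibiting an isomorphism between the two twisted crossed products that fixes $A$ pointwise. First I would bring in the $C^*$-Hopf algebra automorphism $\lambda$ of $H$ dual to $\lambda^0$, that is, the automorphism of $H$ determined by $\lambda^0 (\psi)(h)=\psi(\lambda^{-1}(h))$ for all $\psi\in H^0$ and $h\in H$; this is the same duality used in Section \ref{sec:main}, and since $\lambda^0$ is a $C^*$-Hopf algebra automorphism, so is $\lambda$ --- in particular $\lambda$ is a unital $*$-automorphism of $H$ satisfying $(\lambda\otimes\lambda)\circ\Delta=\Delta\circ\lambda$ and $\lambda\circ S=S\circ\lambda$.

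Next I would record how the twisted action and the cocycle induced by $(\rho_{\lambda^0}, u_{\lambda^0})$ relate to those of $(\rho, u)$. From $\rho_{\lambda^0}=(\id\otimes\lambda^0 )\circ\rho$, $u_{\lambda^0}=(\id\otimes\lambda^0 \otimes\lambda^0 )(u)$ and the defining relation of $\lambda$ one obtains, for $x\in A$ and $h, l\in H$,
$$
h\cdot_{\rho_{\lambda^0}, u_{\lambda^0}}x=(\id\otimes h)(\rho_{\lambda^0}(x))=\lambda^{-1}(h)\cdot_{\rho, u}x, \qquad
\widehat{u_{\lambda^0}}(h, l)=\widehat{u}(\lambda^{-1}(h), \lambda^{-1}(l)) .
$$

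I would then define $\pi\colon A\rtimes_{\rho, u}H\to A\rtimes_{\rho_{\lambda^0}, u_{\lambda^0}}H$ by
$$
\pi(a\rtimes_{\rho, u}h)=a\rtimes_{\rho_{\lambda^0}, u_{\lambda^0}}\lambda(h) \qquad (a\in A, \ h\in H)
$$
and check that it is a $*$-isomorphism. Linearity and bijectivity are clear because $\lambda$ is bijective; multiplicativity follows by expanding the twisted product, using $(\lambda\otimes\lambda)\circ\Delta=\Delta\circ\lambda$ to pull $\lambda$ through the comultiplications and then cancelling the $\lambda^{-1}$'s by the two displayed identities together with the multiplicativity of $\lambda$; compatibility with the involution is the same computation using in addition $\lambda\circ S=S\circ\lambda$. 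Finally, since $\lambda(1)=1$ we get $\pi(a)=\pi(a\rtimes_{\rho, u}1)=a\rtimes_{\rho_{\lambda^0}, u_{\lambda^0}}1=a$, i.e. $\pi|_A=\id_A$. Applying Lemma \ref{lem:inverse1} to the isomorphism $\pi^{-1}$ of $A\rtimes_{\rho_{\lambda^0}, u_{\lambda^0}}H$ onto $A\rtimes_{\rho, u}H$, whose restriction to $A$ is $\id_A$, then yields that the unital inclusions $A\subset A\rtimes_{\rho, u}H$ and $A\subset A\rtimes_{\rho_{\lambda^0}, u_{\lambda^0}}H$ are strongly Morita equivalent.

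The only step with any real content is the verification that $\pi$ respects the twisted multiplication and the involution; this is where the full Hopf-algebra structure of $\lambda$ (compatibility with $\Delta$, $S$ and the product) is used, but once the two transformation identities above are in hand it is a routine computation of exactly the kind already carried out in Lemma \ref{lem:iso3}, so I expect no genuine obstacle.
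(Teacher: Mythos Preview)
Your proposal is correct and follows exactly the same strategy as the paper: define the map $\pi(a\rtimes_{\rho,u}h)=a\rtimes_{\rho_{\lambda^0},u_{\lambda^0}}\lambda(h)$ using the dual automorphism $\lambda$ of $H$, observe it is a $*$-isomorphism fixing $A$, and invoke Lemma~\ref{lem:inverse1}. The paper simply declares the isomorphism check to be ``easy computations'' where you spell out the needed transformation identities for the twisted action and cocycle, and your care in applying Lemma~\ref{lem:inverse1} to $\pi^{-1}$ rather than $\pi$ is harmless since strong Morita equivalence is symmetric.
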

\begin{proof}Let $\pi$ be the map from $A\rtimes_{\rho, u}H$ to $A\rtimes_{\rho_{\lambda^0}, u_{\lambda^0}}H$ defined by
$$
\pi(a\rtimes_{\rho, u}h)=a\rtimes_{\rho_{\lambda^0}, u_{\lambda^0}}\lambda(h)
$$
for any $a\in A$, $h\in H$, where $\lambda$ is the $C^*$-Hopf algebra automorphism
of $H$ induced by $\lambda^0$, that is,
$$
\lambda^0 (\psi)(h)=\psi(\lambda^{-1}(h))
$$
for any $\psi\in H^0$ and $h\in H$. By easy computations, $\pi$ is an
isomorphism of $A\rtimes_{\rho, u}H$ onto $A\rtimes_{\rho_{\lambda^0}, u_{\lambda^0}}H$.
Also, for any $a\in A$,
$$
\pi(a\rtimes_{\rho, u}1) =a\rtimes_{\rho_{\lambda^0}, u_{\lambda^0 }}\lambda(1)
=a\rtimes_{\rho_{\lambda^0}, u_{\lambda^0}}\lambda(1)=a\rtimes_{\rho_{\lambda^0}, u_{\lambda^0}}1 .
$$
Hence $\pi_A =\id_A$. Thus, by Lemma \ref{lem:inverse1}, we obtain the conclusion.
\end{proof}

\begin{cor}\label{cor:iff}Let $A$ and $B$ be unital $C^*$-algebras and $H$ a finite
dimensional $C^*$-Hopf algebra wit hits dual $C^*$-Hopf algebra $H^0$.
Let $(\rho, u)$ and $(\sigma, u)$ be twisted coaction of $H^0$ on $A$ and $B$,
respectively. Let $C=A\rtimes_{\rho, u}H$ and $D=B\rtimes_{\sigma, v}H$.
We suppose that $A' \cap C=\BC1$. Then the following conditions are equivalent:
\newline
$(1)$ The unital inclusions $A\subset C$ and $B\subset D$ are strongly
Morita equivalent,
\newline
$(2)$ There is a $C^*$-Hopf algebra automorphism $\lambda^0$ of $H^0$ such that
the twisted coaction $(\rho, u)$ is strongly Morita equivalent to the twisted coaction
$$
((\id_B \otimes\lambda^0 )\circ\sigma \, , \, (\id_B \otimes\lambda^0 \otimes\lambda^0 )(v))
$$
induced by $(\sigma, v)$ and $\lambda^0$.
\end{cor}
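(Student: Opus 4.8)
The plan is to treat the two implications separately, the substantive one having already been dealt with. The implication $(1)\Rightarrow(2)$ requires nothing new: the hypotheses of Theorem \ref{thm:main} are precisely condition $(1)$ together with $A'\cap C=\BC1$, and its conclusion is precisely condition $(2)$, so $(1)\Rightarrow(2)$ is a direct restatement of that theorem.

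For $(2)\Rightarrow(1)$, I would compose known equivalences. Put $(\sigma_{\lambda^0}, v_{\lambda^0})=((\id_B\otimes\lambda^0)\circ\sigma, (\id_B\otimes\lambda^0\otimes\lambda^0)(v))$, which is a twisted coaction of $H^0$ on $B$ by Lemma \ref{lem:induced}, and set $D_{\lambda^0}=B\rtimes_{\sigma_{\lambda^0}, v_{\lambda^0}}H$. Assuming $(2)$, $(\rho, u)$ is strongly Morita equivalent to $(\sigma_{\lambda^0}, v_{\lambda^0})$; by the observation recalled in Section \ref{sec:intro} (following \cite[Example]{KT4:morita}), a strong Morita equivalence of twisted coactions induces a strong Morita equivalence of the associated unital inclusions, and hence the unital inclusions $A\subset C$ and $B\subset D_{\lambda^0}$ are strongly Morita equivalent. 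On the other hand, Lemma \ref{lem:inverse2}, applied with $(B,\sigma, v)$ in the role of $(A,\rho, u)$, shows that $B\subset D$ and $B\subset D_{\lambda^0}$ are strongly Morita equivalent. Composing the corresponding equivalence bimodules by the relative tensor product and invoking the transitivity of strong Morita equivalence for unital inclusions, \cite[Proposition 2.2]{KT4:morita}, one concludes that $A\subset C$ and $B\subset D$ are strongly Morita equivalent, which is $(1)$.

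No single step of the argument for $(2)\Rightarrow(1)$ presents a real obstacle; the genuinely hard work is that of Theorem \ref{thm:main}, which is simply quoted for $(1)\Rightarrow(2)$. In the reverse direction the one point calling for attention is the bookkeeping of the distinguished closed subspaces under the relative tensor product: one must check that the closed subspace witnessing the equivalence of $A\subset C$ and $B\subset D_{\lambda^0}$, together with the copy of $B$ sitting inside the equivalence bimodule of Lemma \ref{lem:inverse1}, is carried to a closed subspace of the composed $C-D$-equivalence bimodule isomorphic to the appropriate $A-B$-bimodule, so that \cite[Proposition 2.2]{KT4:morita} genuinely applies. This is routine once Lemmas \ref{lem:inverse1} and \ref{lem:inverse2} are in hand, and it needs no Hopf-algebraic input beyond the automorphism $\lambda^0$ already furnished by hypothesis.
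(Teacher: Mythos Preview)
Your proposal is correct and follows essentially the same approach as the paper, which simply says the corollary is immediate from Theorem \ref{thm:main} and Lemmas \ref{lem:inverse1}, \ref{lem:inverse2}. You have unpacked this one-line proof exactly as intended: $(1)\Rightarrow(2)$ is Theorem \ref{thm:main}, while $(2)\Rightarrow(1)$ combines the fact (from \cite[Example]{KT4:morita}) that strong Morita equivalence of twisted coactions implies strong Morita equivalence of the crossed-product inclusions with Lemma \ref{lem:inverse2} and transitivity.
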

\begin{proof}This is immediate by Theorem \ref{thm:main} and Lemmas \ref{lem:inverse1}, \ref{lem:inverse2}.
\end{proof}

\end{document}